\patchcmd{\thebibliography}{\chapter*}{\section*}{}{}
\newcommand{\R}{\mathbb{R}}		% blackboard bold R
\newcommand{\beq}{\begin{equation}}		%starts equation with number
\newcommand{\eeq}{\end{equation}}			%ends equation with number
\newcommand{\beqq}{\begin{equation*}}	%starts equation without number
\newcommand{\eeqq}{\end{equation*}}		%ends equation without number
\newcommand{\id}{1\hspace{-0,9ex}1}
\renewcommand{\P}{\mathbb{P}}
\newcommand{\B}{\mathfrak{B}} 
\newcommand{\F}{\mathcal{F}} 
\newcommand{\A}{\mathcal{A}} 
\newcommand{\Var}{\text{Var}}
\newcommand{\x}{\text{\scalebox{0.62}{$\mathbb{X}$}}}
\newcommand{\X}{\mathbb{X}}
\newtheorem{theorem}{Theorem}[section]
\newtheorem{lemma}[theorem]{Lemma}
\newtheorem{definition}[theorem]{Definition}
\newtheorem{remark}[theorem]{Remark}
\newtheorem{corollary}[theorem]{Corollary}
\newtheorem{assumption}[theorem]{Assumption}
\newtheorem{example}[theorem]{Example}
\newtheorem{proposition}[theorem]{Proposition}
\newtheorem{stepp}{\noindent\bf{Step}}
\begin{document}
\pagestyle{headings} \thispagestyle{headings} \thispagestyle{empty}

\noindent\rule{15.812cm}{0.4pt}
\begin{center}
\textsc{A Markov Process Approach to the asymptotic Theory of abstract Cauchy Problems driven by Poisson Processes}
\end{center}
\begin{center}
	by
\end{center} 
\begin{center}
		\textsc{Alexander Nerlich\footnote{Affiliation: Ulm University}\footnote{Affiliation's address: 89081 Ulm, Helmholtzstr. 18, Germany}\footnote{Author's E-Mail: alexander.nerlich@uni-ulm.de}\footnote{Author's ORCID: 0000-0001-7823-0648}}
\end{center}
\noindent\rule{15.812cm}{0.4pt}
\vspace{0.4cm}
\pagestyle{myheadings} 
\begin{center} 
	{\large ABSTRACT} 
\end{center} 
In this paper, we employ Markov process theory to prove asymptotic results for a class of stochastic processes which arise as solutions of a stochastic evolution inclusion and are given by the representation formula
\begin{align*}
\mathbb{X}_{x}(t)=\sum \limits_{m=0}\limits^{\infty}T((t-\alpha_{m})_{+})(\text{\scalebox{0.62}{$\mathbb{X}$}}_{x,m})1\hspace{-0,9ex}1_{[\alpha_{m},\alpha_{m+1})}(t),
\end{align*}
where $(T(t))_{t \geq 0}$ is a (nonlinear) time-continuous, contractive semigroup acting on a separable Banach space $(V,||\cdot||_{V})$, $(\alpha_{m})_{m \in \mathbb{N}}$ is the sequence of arrival times of a homogeneous Poisson process, $x$ is a $V$-valued random variable and $(\text{\scalebox{0.62}{$\mathbb{X}$}}_{x,m})_{m \in \mathbb{N}}$ is a recursively defined sequence of $V$-valued random variables, fulfilling $\text{\scalebox{0.62}{$\mathbb{X}$}}_{x,0}=x$.\\
It will be demonstrated that $\mathbb{X}_{x}$ is, under some distributional assumptions on the involved random variables, a time-continuous Markov process and that it obeys, under polynomial decay conditions on $T$, the strong law of large numbers (SLLN) and, if the decay rate is sufficiently fast, also the central limit theorem (CLT). Finally, we consider two examples: A nonlinear ordinary differential equation and the (weighted) $p$-Laplacian evolution equation for $p \in (2,\infty)$.\\ 
\textbf{Mathematical Subject Classification (2010).} 60J25, 47J35, 60H15, 35B40, 60F05, 60F15 \\ 
\textbf{Keywords.} Markov Processes, Nonlinear evolution equation, Stochastic evolution inclusion, Pure jump noise, Asymptotic results, Strong law of large numbers, Central limit theorem, Weighted p-Laplacian evolution equation

\section{Introduction}

The theory of Markov processes is a beautiful tool to gain asymptotic results for stochastic processes. Particularly in the area of SPDEs, this theory enables one to get profound insights into the long time behavior of the SPDE's solution. In this paper, we apply Markov process theory to a class of stochastic processes which arise as solutions of abstract Cauchy problems driven by Poisson processes.\\ 

Let us embark on the endeavor ahead of us by rigorously describing the processes considered here: To this end, let $(V,||\cdot||_{V})$ be a separable Banach space and let $(T(t))_{t\geq 0}$ be a time-continuous, contractive semigroup on $V$. Moreover, let $(\Omega,\F,\P)$ be a complete probability space, let $(\eta_{m})_{m \in \mathbb{N}}$ and $(\beta_{m})_{m \in \mathbb{N}}$ be i.i.d. sequences that are independent of each other; where the former consists of $V$-valued random variables  and the latter of $(0,\infty)$-valued, exponentially distributed random variables. Moreover, we coin the term "independent initial" as a $V$-valued random variable $x$ which is independent of $((\beta_{m})_{m \in \mathbb{N}},(\eta_{m})_{m \in \mathbb{N}})$ and introduce, for any independent initial $x$, the recursively defined sequence $(\x_{x,m})_{m\in \mathbb{N}_{0}}$, by $\x_{x,0}:=x$ and $\x_{x,m}:=T(\beta_{m})\x_{x,m-1}+\eta_{m}$ for all $m \in \mathbb{N}$. Now, introduce $\X_{x}:[0,\infty)\times \Omega \rightarrow V$ by $\X_{x}(t):= \sum \limits_{m=0}\limits^{\infty}T((t-\alpha_{m})_{+})(\x_{x,m})\id_{[\alpha_{m},\alpha_{m+1})}(t)$ for all $t \geq 0$, where $\alpha_{0}:=0$ and $\alpha_{m}:=\sum \limits_{k=1}\limits^{m}\beta_{k}$. Finally, set $N(t):=\sum \limits_{m=0}\limits^{\infty}m\id\{ \alpha_{m}\leq t<\alpha_{m+1} \}$, then $(N(t))_{t \geq 0}$ is a Poisson process and we have $\X_{x}(t)=T(t-\alpha_{N(t)})\x_{x,N(t)}$, for all $t \geq 0$, almost surely.\\

Processes like $(\X_{x}(t))_{t \geq 0}$ arise as solutions of abstract Cauchy problems driven by Poisson processes, see \cite{ich2} for more details; particularly \cite[Remark 3.14]{ich2}. Moreover, there is also an intuitive interpretation of the phenomena that are modeled by $(\X_{x}(t))_{t \geq 0}$: Assume $(T(t))_{t \geq 0}$ describes the time-change of some physical process, which is a reasonable assumption, since nonlinear semigroups arise naturally as solutions of evolution equations. Now assume that this physical process is at each of the succeeding times $\alpha_{m}$ exposed to the shock $\eta_{m}$. Then, the process describing the shocked system is $(\X_{x}(t))_{t\geq 0}$. More concretely: The (solution of the) $p$-Laplacian equation we consider as an example, can be used to model the evolution of a hill that consists mostly out of sand, see \cite{birnirtheory}. In this case $T(t)u$ describes the hill's surface at time $t$, $u$ is the hill's initial shape and the $\eta_{k}$'s then could be rain showers, or storms, etc.\\

Now, let us describe this paper's highlights as well as the techniques employed to prove them: Firstly, $(\X_{x}(t))_{t \geq 0}$ is (w.r.t. the completion of its natural filtration and any independent initial $x$) a time-continuous Markov process. For proving this, it is crucial that $(\beta_{m})_{m \in \mathbb{N}}$ is not any i.i.d. sequence, but one consisting of exponentially distributed random variables. Moreover, due to the contractivity and time-continuity of $(T(t))_{t \geq 0}$, the transition semigroup of $(\X_{x}(t))_{t \geq 0}$ has the e-property and the Feller property.\\
For proving these results, we only need the assumptions that have been stated in this introduction so far. But, obtaining more sophisticated results requires the following decay assumption on $(T(t))_{t \geq 0}$: There is a w.r.t. $(T(t))_{t\geq 0}$ invariant, separable and dense sub-Banach space $(W,||\cdot||_{W})\subseteq V$, with continuous injection, such that there are constants $\kappa,\rho \in (0,\infty)$ such that 
\begin{align}
\label{intro_eqbound}
||T(t)w_{1}-T(t)w_{2}||_{W}\leq \left(\kappa t+||w_{1}-w_{2}||_{W}^{-\frac{1}{\rho}}\right)^{-\rho},~\forall t \in [0,\infty)
\end{align}
and $w_{1},w_{2}\in W$. Moreover, we have to assume that $||\eta_{k}||_{V}\in L^{2}(\Omega)$ and that $T(t)0=0$ for all $t \in [0,\infty)$. The latter is due to the nonlinearity indeed not necessarily true, but it is "usually" easily verified whether it holds.\\
As we shall see, (\ref{intro_eqbound}) enables us to derive upper bounds for $||\X_{x}(t)||_{V}$ and $||\X_{x}(t)-\X_{x}(t)||_{V}$. These bounds, together with the e-property allow us to conclude by the aid of the results in \cite{Szarek}, that the transition function of $(\X_{x}(t))_{t \geq 0}$ possesses a unique invariant probability measure $\bar{\mu}:\B(V)\rightarrow [0,1]$. From there, we infer that
\begin{align}\tag{SLLN}
\label{intro_slln}
\lim \limits_{t \rightarrow \infty } \frac{1}{t} \int \limits_{0} \limits^{t} \psi (\X_{x}(\tau)) d\tau =\overline{(\psi)}:= \int \limits_{V}\psi(v)\bar{\mu}(dv),
\end{align}
with probability one, for any Lipschitz continuous $\psi:V \rightarrow \R$ and any independent initial $x$. Once this is achieved we will employ the results in \cite{Holzmann} to prove that: If, in addition the constant $\rho$ appearing in (\ref{intro_eqbound}) fulfills $\rho > \frac{1}{2}$, then there is a $\sigma^{2}(\psi) \in [0,\infty)$ such that
\begin{align}\tag{CLT}
\label{intro_clt}
\lim \limits_{t \rightarrow \infty }\frac{1}{\sqrt{t}}\left( \int \limits_{0}\limits^{t}\psi(\X_{x}(\tau))d\tau-t\overline{(\psi)}\right)=Y\sim N(0,\sigma^{2}(\psi)) ,
\end{align}
in distribution, for any Lipschitz continuous $\psi:V \rightarrow \R$ and any independent initial $x$.\\
Finally, we will illustrate the applicability of these results with two examples: In the first one $(T(t))_{t \geq 0}$ is the semigroup of solutions of a first order, nonlinear ODE. Consequently, in this case $(T(t))_{t \geq0}$ is a semigroup on $\R$. By aid of this example, we will demonstrate that (\ref{intro_clt}) can fail if (\ref{intro_eqbound}) only holds for a $\rho\in (0,\frac{1}{2}]$; particularly, even if $\rho=\frac{1}{2}$.\\ 
In our second (more sophisticated) example, $(T(t))_{t\geq 0}$ acts on an infinite dimensional Banach space and is the semigroup of strong solutions of the weighted $p$-Laplacian evolution equation with Neumann boundary conditions for large $p$, i.e. $p \in (2,\infty)$. We will prove that in this case, (\ref{intro_slln}) holds for any $p \in (2,\infty)$ and (\ref{intro_clt}) holds if $p \in (2,4)$.\\

Results analogous to (\ref{intro_slln}) and (\ref{intro_clt}) are proven in \cite{ich2}. But there it is assumed that the involved semigroup fulfills a finite extinction assumption and not a polynomial decay assumption. Polynomial decay and finite extinction are probably the most common asymptotic behaviors exhibited by nonlinear semigroups. Even though the i.i.d.-splitting method employed in \cite{ich2} and the Markov process technique used in this paper have essentially nothing in common, one can consider these two papers as natural complements of each other. Particularly, the example considered in \cite{ich2} is the $p$-Laplacian evolution equation for ''small'' $p$.\\
The results proven in the current paper mainly rely on \cite{Szarek} and \cite{Holzmann}. Of course there are many other general criteria dealing with ergodicity as well as the SLLN and the CLT for Markov processes. Particularly interesting criteria can be found in the book \cite{Kulik}.\\

Finally, let us briefly outline this paper's structure: Section \ref{sec_notaprel} clarifies this paper's notation, states some basic results with a focus on nonlinear semigroups and concludes with some elementary properties of $(\X_{x}(t))_{t\geq 0}$ - for technical conveniences the results in Section \ref{sec_notaprel} are formulated without any distributional assumptions on the involved random variables. We then proceed in Section \ref{sec_mp} by proving that $(\X_{x}(t))_{t\geq 0}$ is a time-homogeneous Markov process and demonstrate that it possesses, among others, the Feller and the e-property. Section \ref{sec_sllnclt} is this section's centerpiece, it is proven there that the transition function of $(\X_{x}(t))_{t\geq 0}$ possesses a unique invariant probability measure (Proposition \ref{prop_uniqueinvpropmeas}) that it fulfills the SLLN (Theorem \ref{theorem_slln}) as well as the CLT (Theorem \ref{theorem_clt}). Finally, in Section \ref{sec_examples} we start with a general differential inequality result useful to prove (\ref{intro_eqbound}), then consider the nonlinear ODE example (Remark \ref{remarkex1}) and devote the remainder of this section to the $p$-Laplacian example.

\section{Notation and preliminary Results}
\label{sec_notaprel}

This section starts with some remarks regarding nonlinear semigroups, proceeds with some general words on the functional analytic and probability theoretic notations used throughout this paper and concludes with some results regarding the stochastic processes considered in this paper.\\ 
Throughout this section $(V,||\cdot||_{V})$ denotes a separable (real) Banach space and $\B(V)$ its Borel $\sigma$-Algebra.\\

A family of mappings $(T(t))_{t \geq 0}$, where $T(t):V\rightarrow V$ is called a semigroup on $V$, if $T(0)v=v$ and $T(t+h)v=T(t)T(h)v$ for all $t,h \in [0,\infty)$ and $v \in V$. A semigroup $(T(t))_{t \geq 0}$ on $V$ is called
\begin{enumerate}
	\item time-continuous, if $[0,\infty) \ni t \mapsto T(t)v$ is a continuous map for all $v \in V$,
	\item contractive, if $||T(t)v_{1}-T(t)v_{2}||_{V}\leq ||v_{1}-v_{2}||_{V}$ for all $t \in [0,\infty)$ and $v_{1},~v_{2}\in V$,
	\item linear, if the mapping $T(t):V \rightarrow V$ is linear for all $t \in [0,\infty)$.
\end{enumerate}

Particularly in the linear theory, one frequently uses the phrase "strongly continuous" instead of "time continuous" semigroup and implicitly assumes that a (linear) strongly continuous semigroup consists of linear and continuous operators. Of course, using our notation a linear, time-continuous semigroup does not necessarily consist of continuous operators. Therefore, to avoid confusion, we chose to use "time continuous" instead of "strongly continuous."

\begin{remark}\label{remark_measuc0sg} Let $(T(t))_{t \geq 0}$ be a time-continuous and contractive semigroup on $V$. Then it is easily verified that $T$ is also jointly continuous, i.e.  $[0,\infty) \times V \ni (t,v)\mapsto T(t)v$ is a continuous map. Consequently, this map is a fortiori $\mathfrak{B}([0,\infty)\times V)$-$\mathfrak{B}(V)$-measurable. Moreover, by separability we have $\mathfrak{B}([0,\infty)\times V)= \mathfrak{B}([0,\infty))\otimes \mathfrak{B}(V)$, see \cite[page 244]{Billingsley}; which gives that this map is $\mathfrak{B}([0,\infty))\otimes \mathfrak{B}( V)$-$\mathfrak{B}(V)$-measurable.
\end{remark}

The following remark gives the connection between nonlinear semigroups and nonlinear evolution equations. It is not needed for the Sections \ref{sec_mp} and \ref{sec_sllnclt}, but for the $p$-Laplacian example considered in Section \ref{sec_examples}. It is stated now, as it reveals the significance of nonlinear semigroups and therefore motivates why we consider them.

\begin{remark}\label{remark_msee} A mapping $\mathcal{A}:V\rightarrow 2^{V}$ is called multi-valued operator and $D(\mathcal{A}):=\{v\in V: \mathcal{A}v\neq \emptyset\}$ is called its domain. $\A$ is single-valued if $\mathcal{A}v$ contains precisely one element for all $v\in D(\mathcal{A})$. Moreover, instead of $\mathcal{A}:V\rightarrow 2^{V}$ we may write $\mathcal{A}:D(\mathcal{A})\rightarrow 2^{V}$. In addition, by identifying $\A$ with its graph $G(\A):=\{(v,\hat{v}):~v \in D(\A),~\hat{v}\in \A v\}$ we may write $(v,\hat{v})\in \A$ instead of $v \in D(\A)$ an $\hat{v}\in \A v$. Furthermore, $\A:D(\A)\rightarrow 2^{V}$ is called accretive, if $||v_{1}-v_{2}||_{V}\leq ||v_{1}-v_{2}+\alpha(\hat{v}_{1}-\hat{v}_{2})||_{V}$ for all $\alpha>0$, $(v_{1},\hat{v}_{1}),(v_{2},\hat{v}_{2}) \in \A$; m-accretive, if it is accretive and $Range(Id+\alpha\A)=V$, for all $\alpha>0$; and densely defined if $\overline{D(\A)}=V$.\\
Moreover, we have the following celebrated result connection nonlinear semigroups and evolution equations: Let $\mathcal{A}:V\rightarrow 2^{V}$ be densely defined and m-accretive. Then, the initial value problem
	\begin{align}
	\label{remark_eqms}
	0 \in u^{\prime}(t)+\mathcal{A}u(t),~\text{for a.e. }t\in (0,\infty),~u(0)=v,
	\end{align}
has precisely one mild solution, see \cite[Definition 1.3]{BenilanBook} and \cite[Prop. 3.7]{BenilanBook}. Moreover, the family of mappings $(T_{\A}(t))_{t \geq 0}$ such that $T_{\A}(\cdot)v$ is, for each $v \in V$, the mild solution of (\ref{remark_eqms}) forms  a time-continuous, contractive semigroup on $V$, see \cite[Theorems 1.10 and 3.10]{BenilanBook}, and will be called "the semigroup associated to $\A$".
\end{remark}

The reader is referred to \cite{BenilanBook} for a comprehensive introduction to nonlinear semigroups. Moreover, the book \cite{acmbook} deals with existence, uniqueness, asymptotic and qualitative results for numerous evolution equations and this book's appendix contains a more concise introduction to this topic.\\
Even though we also consider linear semigroups, namely the semigroup associated to a Markov process, no profound knowledge of linear semigroups is required to understand this paper.\\  

Given a measure space $(K,\Sigma,\nu)$, we denote by $L^{q}(K,\Sigma,\nu)$, where $q \in [1,\infty]$, the usual Lebesgue spaces of ($\nu$-equivalence classes of) real-valued, $\Sigma$-$\B(\mathbb{R})$-measurable functions $f:K \rightarrow \mathbb{R}$, such that: $|f|^{q}$ is Lebesgue integrable, if $q \neq \infty$; $\nu$-essentially bounded, if $q= \infty$.\\
Moreover, we introduce the spaces $\text{BM}(V)$, $C_{b}(V)$, $\text{Lip}_{b}(V)$ and $\text{Lip}(V)$ as the spaces of all functions $\psi:V\rightarrow \mathbb{R}$ which are bounded and measurable, continuous and bounded, Lipschitz continuous and bounded, and Lipschitz continuous, respectively. Moreover, for any Lipschitz continuous function $\psi$, we denote its Lipschitz constant by $L_{\psi}$.\\

Throughout everything which follows $(\Omega,\F,\P)$ denotes a complete probability space. Moreover, we introduce the short cut notation $L^{q}(\Omega,\F,\P):=L^{q}(\Omega)$ for all $q \in [1,\infty)$. In addition, $\mathcal{M}(\Omega;V)$ denotes the space of $V$-valued random variables, i.e. all $\F$-$\B(V)$-measurable mappings $Y:\Omega \rightarrow V$.\\ 
If $Y_{i}$ is a $V_{i}$-valued random variable for each $i \in I$, where $I$ is an arbitrary index set and the $V_{i}$'s are separable Banach spaces, then $\sigma(Y_{j};j \in I)\subseteq \F$ is the smallest  $\sigma$-Algebra, such that each $Y_{i}$ is $\sigma(Y_{j};j \in I)-\B(V_{i})$-measurable. In addition, $\sigma_{0}(Y_{j};j \in I)$ denotes its completion, i.e.
\begin{align*}
\sigma_{0}(Y_{j};j \in I):=\{A \in \F:~ \exists B \in \sigma(Y_{j};j \in I)\text{, such that } \P(A \Delta B)=0 \},
\end{align*}
where $\Delta$ denotes the symmetric difference. It is easily verified that the right-hand-side of the previous equation is indeed a $\sigma$-Algebra and the smallest one containing all $\P$-null-sets as well as all elements of $\sigma(Y_{j};j \in I)$. Moreover, it is well known that an $Y \in \mathcal{M}(\Omega;V)$ is independent of a $\sigma$-algebra, if and only if it is independent of the $\sigma$-algebra's completion.\\
Finally, for any $Y \in \mathcal{M}(\Omega;V)$, we denote by $\P_{Y}$ its law, i.e. $\P_{Y}(B):=\P(Y\in B)$, for all $B \in \B(V)$.\\
Even though we mostly consider real-valued functionals of vector-valued processes, some results on random variables (and stochastic processes) taking values in Banach spaces are needed in the sequel. For a concise introduction, see \cite[Chapter 2]{SIBS}.\\

\noindent Now, let us spend some words on the stochastic process which is the central object of this paper:

\begin{definition}\label{def_proc} Let $(\beta_{m})_{m \in \mathbb{N}}$, where $\beta_{m}:\Omega \rightarrow (0,\infty)$, be a sequence of real-valued random variables. Moreover, let $(\eta_{m})_{m \in \mathbb{N}}\subseteq\mathcal{M}(\Omega;V)$, introduce $\alpha_{m}:=\sum \limits_{k=1}\limits^{m}\beta_{k}$ for all $m \in \mathbb{N}$ and set $\alpha_{0}:=0$. Finally, let $x \in \mathcal{M}(\Omega;V)$ and let $(T(t))_{t \geq 0}$ be a time-continuous, contractive semigroup on $V$. Then the sequence $(\x_{x,m})_{m\in \mathbb{N}_{0}}$ defined by $\x_{x,0}:=x$ and
\begin{align*}
\x_{x,m}:=T(\alpha_{m}-\alpha_{m-1})\x_{x,m-1}+\eta_{m}=T(\beta_{m})\x_{x,m-1}+\eta_{m},~\forall m \in \mathbb{N},
\end{align*}
is called the sequence generated by $((\beta_{m})_{m \in \mathbb{N}},(\eta_{m})_{m \in \mathbb{N}},x,T)$ in $V$. Moreover, the stochastic process $\X_{x}:[0,\infty)\times \Omega \rightarrow V$ defined by
\begin{align*}
\X_{x}(t):= \sum \limits_{m=0}\limits^{\infty}T((t-\alpha_{m})_{+})(\x_{x,m})\id_{[\alpha_{m},\alpha_{m+1})}(t),~\forall t\geq 0,
\end{align*}
is called the process generated by $((\beta_{m})_{m \in \mathbb{N}},(\eta_{m})_{m \in \mathbb{N}},x,T)$ in $V$.
\end{definition}

\begin{remark}\label{remark_Xmeascad} Let $(\x_{x,m})_{m\in \mathbb{N}_{0}}$ and $\X_{x}:[0,\infty)\times \Omega \rightarrow V$ be the sequence and the process generated by some $((\beta_{m})_{m \in \mathbb{N}},(\eta_{m})_{m \in \mathbb{N}},x,T)$ in $V$. Then it follows easily from Remark \ref{remark_measuc0sg} that each $\x_{x,m}$ and each $\X_{x}(t)$ is $\F$-$\B(V)$-measurable.
\end{remark}

Let us conclude this section with the following lemma, which reveals that the stochastic process generated by some $((\beta_{m})_{m \in \mathbb{N}},(\eta_{m})_{m \in \mathbb{N}},x,T)$ in $V$, depends continuously on $(\eta_{m})_{m \in \mathbb{N}}$ and $x$.

\begin{lemma}\label{lemma_xcont} Let $(\beta_{m})_{m \in \mathbb{N}}$, where $\beta_{m}:\Omega \rightarrow (0,\infty)$, be a sequence of real-valued random variables. Moreover, let $(\eta_{m})_{m \in \mathbb{N}},~(\hat{\eta}_{m})_{m \in \mathbb{N}}\subseteq\mathcal{M}(\Omega;V)$ and $x,~\hat{x} \in \mathcal{M}(\Omega;V)$. In addition, introduce $\alpha_{m}:=\sum \limits_{k=1}\limits^{m}\beta_{k}$ for all $m \in \mathbb{N}$, set $\alpha_{0}:=0$ and define $N(t):= \sum \limits_{m=0}\limits^{\infty}m\id\{ \alpha_{m}\leq t<\alpha_{m+1} \}$ for all $t \in [0,\infty)$. Finally, let $\X_{x}$ and $\hat{\X}_{\hat{x}}$ be the processes generated by $((\beta_{m})_{m \in \mathbb{N}},(\eta_{m})_{m \in \mathbb{N}},x,T)$ in $V$ and $((\beta_{m})_{m \in \mathbb{N}},(\hat{\eta}_{m})_{m \in \mathbb{N}},\hat{x},T)$ in $V$, respectively; where $(T(t))_{t \geq 0}$ is a time-continuous, contractive semigroup on $V$. Then the assertion
\begin{align}
\label{lemma_twoprocessboundeq}
||\X_{x}(t)-\hat{\X}_{\hat{x}}(t)||_{V}\leq ||x-\hat{x}||_{V}+\sum \limits_{k=1}\limits^{N(t)}||\eta_{k}-\hat{\eta}_{k}||_{V},~\forall t \in [0,\infty)
\end{align}
holds on $\Omega$.
\end{lemma}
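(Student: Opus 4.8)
The plan is to argue pointwise on $\Omega$: I would fix $\omega \in \Omega$ and $t \in [0,\infty)$, unwind the definition of $\X_x(t)(\omega)$ and $\hat\X_{\hat x}(t)(\omega)$, reduce the estimate to a comparison of the two generating sequences (Definition \ref{def_proc}), and then settle that comparison by induction on $m$ using only the contractivity of $(T(t))_{t\ge 0}$. Write $(\x_{x,m})_m$ and $(\hat\x_{\hat x,m})_m$ for the sequences generated by $((\beta_m)_m,(\eta_m)_m,x,T)$ and $((\beta_m)_m,(\hat\eta_m)_m,\hat x,T)$ in $V$, respectively.

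Concretely, since each $\beta_k$ is $(0,\infty)$-valued, the sequence $(\alpha_m(\omega))_m$ is strictly increasing, so the intervals $[\alpha_m(\omega),\alpha_{m+1}(\omega))$ are pairwise disjoint; hence the series defining $\X_x(t)(\omega)$ has at most one nonvanishing summand and is in particular unproblematic to evaluate. If there is no index $m$ with $\alpha_m(\omega)\le t<\alpha_{m+1}(\omega)$, then $\X_x(t)(\omega)=\hat\X_{\hat x}(t)(\omega)=0$ and $N(t)(\omega)=0$, so (\ref{lemma_twoprocessboundeq}) reduces at $(t,\omega)$ to the trivial inequality $0\le\|x(\omega)-\hat x(\omega)\|_V$. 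Otherwise there is exactly one such $m$; then $N(t)(\omega)=m$, $(t-\alpha_m(\omega))_+=t-\alpha_m(\omega)\ge 0$, and $\X_x(t)(\omega)=T(t-\alpha_m(\omega))(\x_{x,m}(\omega))$, and likewise for $\hat\X_{\hat x}$ — the crucial point being that $N(t)$, and hence the active index, depends only on $(\beta_k)_k$ and is therefore the same for both processes. Contractivity then yields $\|\X_x(t)(\omega)-\hat\X_{\hat x}(t)(\omega)\|_V\le\|\x_{x,m}(\omega)-\hat\x_{\hat x,m}(\omega)\|_V$, so it remains to bound the right-hand side.

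For that I would prove, for every $m\in\mathbb{N}_0$ and every $\omega$, that $\|\x_{x,m}(\omega)-\hat\x_{\hat x,m}(\omega)\|_V\le\|x(\omega)-\hat x(\omega)\|_V+\sum_{k=1}^m\|\eta_k(\omega)-\hat\eta_k(\omega)\|_V$, by induction on $m$: the case $m=0$ is the identity $\x_{x,0}-\hat\x_{\hat x,0}=x-\hat x$, and for the inductive step one writes $\x_{x,m}-\hat\x_{\hat x,m}=\bigl(T(\beta_m)\x_{x,m-1}-T(\beta_m)\hat\x_{\hat x,m-1}\bigr)+(\eta_m-\hat\eta_m)$, applies the triangle inequality, bounds the first summand by $\|\x_{x,m-1}-\hat\x_{\hat x,m-1}\|_V$ via contractivity, and inserts the induction hypothesis. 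Choosing $m=N(t)(\omega)$ and combining with the previous paragraph gives (\ref{lemma_twoprocessboundeq}) on all of $\Omega$. I do not expect a genuine obstacle; the only point requiring care is to run the whole argument everywhere on $\Omega$ rather than merely almost surely — in particular not to invoke the identity $\X_x(t)=T(t-\alpha_{N(t)})\x_{x,N(t)}$, which holds only a.s. — and to make sure the degenerate case $\sup_m\alpha_m(\omega)\le t$ (which can occur, since the $\beta_k$ are assumed positive but not necessarily exponential) is covered, as it is above.
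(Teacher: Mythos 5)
Your proposal is correct and follows essentially the same route as the paper: the same case split (no active interval, i.e. $t\ge\sup_m\alpha_m(\omega)$, versus exactly one active index $N(t)(\omega)$), the same reduction via contractivity to $\|\x_{x,N(t)}-\hat\x_{\hat x,N(t)}\|_V$, and the same induction on $m$ for the bound on the generated sequences. Your extra care about arguing pointwise on all of $\Omega$ rather than invoking the a.s. representation matches what the paper implicitly does with its set $M_t=\{t<\sup_m\alpha_m\}$.
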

\begin{proof} Let $t \in [0,\infty)$ be given and set $M_{t}:=\{\omega \in \Omega: t< \sup \limits_{m \in \mathbb{N}} \alpha_{m}(\omega) \}$. Note that $\X_{x}(t)=\hat{\X}_{\hat{x}}(t)=0$ on $\Omega \setminus M_{t}$, thus (\ref{lemma_twoprocessboundeq}) holds on $\Omega \setminus M_{t}$. Moreover, on $M_{t}$ we have by contractivity of $(T(t))_{t \geq 0}$ that
\begin{align*}
||\X_{x}(t)-\hat{\X}_{\hat{x}}(t)||_{V} = ||T(t-\alpha_{N(t)})\x_{x,N(t)}-T(t-\alpha_{N(t)})\hat{\x}_{\hat{x},N(t)}||_{V}\leq ||\x_{x,N(t)}-\hat{\x}_{\hat{x},N(t)}||_{V},
\end{align*}
where $(\x_{x,m})_{m\in \mathbb{N}_{0}}$ and $(\hat{\x}_{\hat{x},m})_{m\in \mathbb{N}_{0}}$ denote the sequences generated by $((\beta_{m})_{m \in \mathbb{N}},(\eta_{m})_{m \in \mathbb{N}},x,T)$ in $V$ and $((\beta_{m})_{m \in \mathbb{N}},(\hat{\eta}_{m})_{m \in \mathbb{N}},\hat{x},T)$ in $V$, respectively. Consequently, it suffices to prove that
\begin{align*}
||\x_{x,m}-\hat{\x}_{\hat{x},m}||_{V} \leq ||x-\hat{x}||_{V}+\sum \limits_{k=1}\limits^{m}||\eta_{k}-\hat{\eta}_{k}||_{V},~\forall m \in \mathbb{N}_{0}.
\end{align*}
For $m=0$ this is clear and for $m \in \mathbb{N}_{0}$ we get $||\x_{x,m+1}-\hat{\x}_{,\hat{x},m+1}||_{V} \leq ||\x_{x,m}-\hat{\x}_{\hat{x},m}||_{V}+||\eta_{m+1}-\hat{\eta}_{m+1}||_{V}$, which yields the claim by induction.
\end{proof}

\section{The Markov Property}
\label{sec_mp}

Throughout this section, $(V,||\cdot||_{V})$ is a separable Banach space and $(\eta_{m})_{m \in \mathbb{N}} \subseteq \mathcal{M}(\Omega;V)$ denotes an i.i.d. sequence. In addition $(\beta_{m})_{m \in \mathbb{N}}$, where $\beta_{m}:\Omega \rightarrow (0,\infty)$, is an i.i.d. sequence of exponentially distributed random variables, with parameter $\theta \in (0,\infty)$. Furthermore, we assume that $(\eta_{m})_{m \in \mathbb{N}}$ and $(\beta_{m})_{m \in \mathbb{N}}$ are independent of each other.\\   
Now, set $\alpha_{m}:= \sum \limits_{k=1}\limits^{m}\beta_{k}$ for all $m \in \mathbb{N}$, introduce $\alpha_{0}:=0$ and $N: [0,\infty)\times \Omega \rightarrow \mathbb{N}_{0}$ by
\begin{align}
\label{eq_poissonp}
N(t):= \sum \limits_{m=0}\limits^{\infty}m\id\{ \alpha_{m}\leq t<\alpha_{m+1} \},~\forall t \in [0,\infty).
\end{align}
Moreover, let $(T(t))_{t \geq 0}$ be a time-continuous, contractive semigroup on $V$.\\

An $x \in \mathcal{M}(\Omega;V)$ is called an independent initial, if $x$ is jointly independent of $(\eta_{m})_{m \in \mathbb{N}}$ and $(\beta_{m})_{m \in \mathbb{N}}$. Moreover, for any independent initial $x \in \mathcal{M}(\Omega;V)$ we denote by $(\x_{x,m})_{m\in \mathbb{N}_{0}}$ the sequence and by $\X_{x}:[0,\infty)\times \Omega \rightarrow V$ the process generated by $((\beta_{m})_{m \in \mathbb{N}},(\eta_{m})_{m \in \mathbb{N}},x,T)$ in $V$. Moreover, we may identify a $v \in V$ with the random variable which is constantly $v$ and note that any $v \in V$ is obviously an independent initial. In addition,  we introduce the filtration $(\F^{x}_{t})_{t \geq 0}$, by $\F^{x}_{t}:= \sigma_{0}(\X_{x}(\tau);~\tau \in [0,t])$, for all $t \in [0,\infty)$ and any independent initial $x \in \mathcal{M}(\Omega;V)$. Finally, let $P:[0,\infty)\times V \times \B(V)\rightarrow [0,1]$ be defined by
\begin{align}
\label{eq_pdef}
P(t,v,B):=\P(\X_{v}(t)\in B)=\P_{X_{v}(t)}(B),
\end{align}
for all $v \in V$, $t \in [0,\infty)$ and $B \in \B(V)$.\\

The purpose of this section is to show that $\X_{x}$ is for any independent initial $x \in \mathcal{M}(\Omega;V)$ a time homogeneous Markov process with transition function $P$ and initial distribution $\P_{x}$. In addition, we will establish some basic properties of these quantities.\\
Before embarking on theses tasks, let us clarify the following: Throughout this entire section, we do not assume that $(T(t))_{t \geq 0}$ exhibits the introductory mentioned (or any other) decay behavior. Consequently, one can also apply this section's results under possibly different decay assumptions on the involved semigroup.

\begin{remark}\label{remark_xrepresentation} Let  $x \in \mathcal{M}(\Omega;V)$ be an independent initial. Appealing to the strong law of large numbers yields $\lim \limits_{m \rightarrow \infty}\alpha_{m}=\infty$ almost surely. Consequently, on a set $\tilde{\Omega}\in \F$ of full $\P$-measure, we can find for each $\omega \in \tilde{\Omega}$ and $t \in [0,\infty)$  precisely one $m \in \mathbb{N}$, s.t. $t \in [\alpha_{m}(\omega),\alpha_{m+1}(\omega))$. Thus, we get
\begin{align*}
\P \left(\X_{x}(t)=T(t-\alpha_{N(t)})\x_{x,N(t)},~\forall t \geq 0\right)=1.
\end{align*} 
In addition, it is well known that $(N(t))_{t\geq 0}$ is a homogeneous Poisson process with intensity $\theta$.
\end{remark}

\begin{lemma}\label{lemma_meas} The mapping defined by $[0,\infty)\times V \times \Omega \ni (t,v,\omega)\mapsto \X_{v}(t,\omega)$ is $\B([0,\infty))\otimes \B(V)\otimes \F$-$\B(V)$-measurable. Consequently, if $\psi\in \text{BM}(V)$, then $[0,\infty)\times V \ni (t,v) \mapsto \mathbb{E}\psi(\X_{v}(t))$ is $\B([0,\infty))\otimes \B(V)$-$\B(\R)$-measurable
\end{lemma}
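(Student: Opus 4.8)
The plan is to build measurability of $(t,v,\omega)\mapsto \X_v(t,\omega)$ termwise from the series defining it, using the joint measurability of $T$ recorded in Remark \ref{remark_measuc0sg}. First I would prove, by induction on $m$, that $(v,\omega)\mapsto \x_{v,m}(\omega)$ is $\B(V)\otimes\F$-$\B(V)$-measurable for every $m\in\mathbb{N}_{0}$. The base case $m=0$ is the coordinate projection $(v,\omega)\mapsto v$. For the step, write $\x_{v,m}=T(\beta_{m}(\omega))\x_{v,m-1}(\omega)+\eta_{m}(\omega)$: the map $(v,\omega)\mapsto(\beta_{m}(\omega),\x_{v,m-1}(\omega))$ into $[0,\infty)\times V$ is measurable by the induction hypothesis, and $(s,w)\mapsto T(s)w$ is $\B([0,\infty))\otimes\B(V)$-$\B(V)$-measurable by Remark \ref{remark_measuc0sg}, so the composition is measurable; adding $\eta_{m}(\omega)$ preserves measurability because vector addition $V\times V\to V$ is continuous, hence (by separability of $V$, via the product-$\sigma$-algebra identity already used in Remark \ref{remark_measuc0sg}) jointly Borel measurable.

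Next I would fix $m$ and consider the $m$-th summand $(t,v,\omega)\mapsto T\big((t-\alpha_{m}(\omega))_{+}\big)(\x_{v,m}(\omega))\,\id_{[\alpha_{m}(\omega),\alpha_{m+1}(\omega))}(t)$. Since each $\alpha_{m}$ depends only on $\omega$ and is a random variable, $(t,\omega)\mapsto(t-\alpha_{m}(\omega))_{+}$ is $\B([0,\infty))\otimes\F$-measurable; combining this with the previous step and the joint measurability of $T$ shows that $(t,v,\omega)\mapsto T((t-\alpha_{m}(\omega))_{+})(\x_{v,m}(\omega))$ is $\B([0,\infty))\otimes\B(V)\otimes\F$-$\B(V)$-measurable. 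The set $\{(t,\omega):\alpha_{m}(\omega)\le t<\alpha_{m+1}(\omega)\}$ belongs to $\B([0,\infty))\otimes\F$, so its indicator is measurable, and multiplying the $V$-valued factor by this scalar indicator preserves measurability (scalar multiplication $V\times\R\to V$ being continuous, hence jointly Borel). Finally, since $\beta_{m}>0$ the intervals $[\alpha_{m},\alpha_{m+1})$ are pairwise disjoint, so for each $(t,v,\omega)$ at most one summand is nonzero; hence the partial sums $S_{n}(t,v,\omega):=\sum_{m=0}^{n}(\cdots)$ are eventually constant, and $\X_{v}(t,\omega)=\lim_{n\to\infty}S_{n}(t,v,\omega)$ is a pointwise limit of $\B([0,\infty))\otimes\B(V)\otimes\F$-$\B(V)$-measurable maps into $V$, hence measurable.

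For the second assertion, if $\psi\in\text{BM}(V)$ then $(t,v,\omega)\mapsto\psi(\X_{v}(t,\omega))$ is $\B([0,\infty))\otimes\B(V)\otimes\F$-$\B(\R)$-measurable and bounded; applying the Fubini--Tonelli theorem (applicable since $\P$ is a finite measure and the integrand is bounded and jointly measurable) yields that $(t,v)\mapsto\int_{\Omega}\psi(\X_{v}(t,\omega))\,\P(d\omega)=\E\psi(\X_{v}(t))$ is $\B([0,\infty))\otimes\B(V)$-$\B(\R)$-measurable.

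I expect the only genuine subtlety to be organisational: handling the recursion by a clean induction, invoking the product-$\sigma$-algebra identity exactly where joint Borel measurability of addition and scalar multiplication on $V$ is needed, and disposing of the countable sum via stabilisation of partial sums (which works because the $\beta_{m}$ are strictly positive) rather than a convergence-of-series argument. No single step is deep, so the main obstacle is bookkeeping rather than mathematics.
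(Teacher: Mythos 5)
Your argument is correct, but it takes a genuinely different route from the paper. The paper does not unravel the defining series at all: it first observes that for fixed $v$ and $\omega$ the path $t\mapsto \X_{v}(t,\omega)$ is right-continuous (splitting into the cases $t\geq\sup_{m}\alpha_{m}(\omega)$ and $t<\sup_{m}\alpha_{m}(\omega)$ and using time-continuity of $T$), so that the standard result on right-continuous processes (\cite[Prop. 2.2.3.ii]{SIBS}) gives $\B([0,\infty))\otimes\F$-measurability of $(t,\omega)\mapsto\X_{v}(t,\omega)$; it then invokes Lemma \ref{lemma_xcont} to get continuity of $v\mapsto\X_{v}(t,\omega)$ and concludes joint measurability in $(t,v,\omega)$ from a Carath\'eodory-type lemma (\cite[Lemma 4.51]{IDA}) for maps measurable in one variable and continuous in the other; the second assertion is then handled exactly as you do, via a Fubini-type measurability statement. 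Your termwise induction on $\x_{v,m}$, the joint measurability of each summand via Remark \ref{remark_measuc0sg} and continuity of addition and scalar multiplication (with the product-$\sigma$-algebra identity from separability), and the stabilisation of partial sums (valid since the $\beta_{m}$ are strictly positive, so at most one indicator is nonzero at any $(t,\omega)$) are all sound, so no gap. What each approach buys: yours is more elementary and self-contained, needing no external measurability lemmas beyond separability facts; the paper's is shorter and recycles structural properties (right-continuity of paths, Lipschitz dependence on the initial value from Lemma \ref{lemma_xcont}) that it needs again anyway, e.g.\ in the proof of Lemma \ref{lemma_basicprop}, where the same right-continuity argument yields the c\`adl\`ag property.
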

\begin{proof} For the moment, let $v \in V$ and $\omega \in \Omega$ be fixed. If, for a given $t \in [0,\infty)$, we have $t \geq \sup \limits_{m \in \mathbb{N}}\alpha_{m}(\omega)$, then the same holds for $t+h$, for any $h\geq 0$. Thus, we get $\X_{v}(t+h,\omega)=\X_{v}(t,\omega)=0$. Moreover, if $t < \sup \limits_{m \in \mathbb{N}}\alpha_{m}(\omega)$, we have $N(t+h,\omega)=N(t,\omega)$ for all $h\geq 0$ sufficiently small. Consequently, the time-continuity of $T$ yields that $[0,\infty)\ni t \mapsto \X_{v}(t,\omega)$ is right-continuous for all $v \in V$ and $\omega \in \Omega$. Consequently, as each $\X_{v}(t)$ is $\F$-$\B(V)$-measurable we get that $\X_{v}$ is $\B([0,\infty))\otimes \F$-$\B(V)$-measurable, see \cite[Prop. 2.2.3.ii]{SIBS}. In addition, appealing to Lemma \ref{lemma_xcont} yields that $V \ni v\mapsto \X_{v}(t,\omega)$ is continuous for all $t \in [0,\infty)$ and $\omega \in \Omega$. Consequently, $[0,\infty)\times V \times \Omega \ni (t,v,\omega)\mapsto \X_{v}(t,\omega)$ is $ \B([0,\infty))\otimes \B(V)\otimes \F$-$\B(V)$-measurable, by \cite[Lemma 4.51]{IDA}.\\ 
Finally, let $\psi \in \text{BM}(V)$, then the boundedness of $\psi$ yields that the expectation at hand exists; and the already proven measurability result (together with \cite[Prop. 2.1.4]{SIBS}) enables us to conclude the remaining claim.
\end{proof}

\begin{theorem}\label{theorem_mp} Let $x \in \mathcal{M}(\Omega;V)$ be an independent initial. Then $(\X_{x}(t))_{t \geq 0}$ is a Markov process with respect to $(\F_{t}^{x})_{t \geq 0}$, i.e.
\begin{align}
\label{theorem_mpeq1}
\P(\X_{x}(t+h)\in B|\F_{t}^{x})=\P(\X_{x}(t+h)\in B|\X_{x}(t)),
\end{align}
almost surely, for all $t,h \in [0,\infty)$ and $B \in \B(V)$. In addition, $P$ is a time homogeneous transition function, that is
\begin{enumerate}[i)]
	\item\label{theorem_mpenumi1} $\B(V)\ni B \mapsto P(t,v,B)$ is a probability measure on $(V,\B(V))$, for all $t \in [0,\infty)$ and $v \in V$,
	\item\label{theorem_mpenumi2} $\P(0,v,B)=\id_{B}(v)$ for all $v \in V$ and $B \in \B(V)$, 
	\item\label{theorem_mpenumi3}  $P(\cdot,\cdot,B)$ is $\B([0,\infty))\otimes \B(V)$-$\B([0,1])$-measurable for any $B \in \B(V)$ and
	\item\label{theorem_mpenumi4} $P$ fulfills has the Chapman-Kolmogorov property, i.e. $P(t+h,v,B)=\int \limits_{V} P(h,\hat{v},B)dP(t,v,d\hat{v})$ for all $t,h \in [0,\infty)$, $v \in V$ and $B \in \B(V)$.
\end{enumerate}
Moreover, $(\X_{x}(t))_{t \geq 0}$ is time homogeneous (with initial distribution $\P_{x}$) and transition function $P$, i.e.
\begin{align}
\label{theorem_mpeq2}
\P(\X_{x}(t+h)\in B|\F_{t}^{x})=P(h,\X_{x}(t),B),
\end{align} 
almost surely, for all $t,h \in [0,\infty)$ and $B \in \B(V)$.
\end{theorem}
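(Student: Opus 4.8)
The plan is to reduce the whole statement to the single homogeneity identity (\ref{theorem_mpeq2}) and to prove the latter by a pathwise shift argument exploiting the memorylessness of the exponential law. The other assertions are cheap corollaries: i) holds because $P(t,v,\cdot)=\P_{\X_v(t)}$ is a law; ii) because $\X_v(0)=T(0)v=v$; iii) is exactly Lemma \ref{lemma_meas} applied to $\psi=\id_B\in\text{BM}(V)$, since $P(t,v,B)=\E[\id_B(\X_v(t))]$; (\ref{theorem_mpeq1}) follows from (\ref{theorem_mpeq2}) by the tower property, as the right-hand side of (\ref{theorem_mpeq2}) is $\sigma(\X_x(t))$-measurable and $\sigma(\X_x(t))\subseteq\F_t^x$; and iv) follows by applying (\ref{theorem_mpeq2}) to the constant independent initial $x=v$ and taking expectations, giving $P(t+h,v,B)=\E[P(h,\X_v(t),B)]=\int_V P(h,\hat v,B)\,P(t,v,d\hat v)$. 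To handle (\ref{theorem_mpeq2}) I would pass to the functional form: with $(P_h\psi)(v):=\E[\psi(\X_v(h))]$ for $\psi\in\text{BM}(V)$, it is enough to show $\E[\psi(\X_x(t+h))\mid\F_t^x]=(P_h\psi)(\X_x(t))$ a.s. (this gives (\ref{theorem_mpeq2}) upon taking $\psi=\id_B$, since $(P_h\id_B)(v)=P(h,v,B)$). As both sides are bounded and the right-hand side is $\sigma(\X_x(t))$-measurable, a functional monotone class argument reduces this to the identity $\E[Z\,\psi(\X_x(t+h))]=\E[Z\,(P_h\psi)(\X_x(t))]$ for all $Z=G(\X_x(\tau_1),\dots,\X_x(\tau_n))$ with $\tau_i\in[0,t]$ and $G$ bounded measurable. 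Finally I would split $\Omega$, up to the null set off which the representation of Remark \ref{remark_xrepresentation} holds, into the events $\{N(t)=m\}$, $m\in\N_0$, and prove the identity on each.

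Fix $m$ and work on $\{N(t)=m\}$; put $a:=\alpha_m$, $s:=t-a\ (\geq 0)$. Iterating $T(p)T(q)=T(p+q)$ and using the arrival-time bookkeeping $\alpha_{m+k}-t=(\beta_{m+1}-s)+\beta_{m+2}+\dots+\beta_{m+k}$ together with $N(t+h)-m=\#\{k\geq 1:\alpha_{m+k}\leq t+h\}$, one checks that the sequence generated by $\big((\beta_{m+1}-s,\beta_{m+2},\beta_{m+3},\dots),(\eta_{m+1},\eta_{m+2},\dots),\X_x(t),T\big)$ in $V$ has $k$-th term $\x_{x,m+k}$, hence that $\psi(\X_x(t+h))=\Phi_h\big(\X_x(t),(\beta_{m+1}-s,\beta_{m+2},\dots),(\eta_{m+1},\eta_{m+2},\dots)\big)$ on $\{N(t)=m\}$, where $\Phi_h(v,(b_k)_k,(e_k)_k)$ is $\psi$ evaluated on the process generated by $((b_k)_k,(e_k)_k,v,T)$ at time $h$ (jointly measurable by the argument of Lemma \ref{lemma_meas}, now with the two sequences as extra parameters). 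Moreover, on $\{N(t)=m\}$ each index $N(\tau_i)$ is $\leq m$ and is determined by $\beta_1,\dots,\beta_m$, so $Z\,\id\{N(t)=m\}=G_m(x,\beta_1,\dots,\beta_m,\eta_1,\dots,\eta_m)\,\id\{N(t)=m\}$ and $\X_x(t)\,\id\{N(t)=m\}=H_m(x,\beta_1,\dots,\beta_m,\eta_1,\dots,\eta_m)\,\id\{N(t)=m\}$ for suitable measurable $G_m,H_m$, while $\id\{N(t)=m\}=\id\{a\leq t\}\,\id\{\beta_{m+1}>s\}$.

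Next, condition on $\mathcal G_m:=\sigma(x,\beta_1,\dots,\beta_m,\eta_1,\dots,\eta_m)$. The block $\big((\beta_{m+k})_{k\geq 1},(\eta_{m+k})_{k\geq 1}\big)$ is independent of $\mathcal G_m$ and equidistributed with $\big((\beta_k)_{k\geq 1},(\eta_k)_{k\geq 1}\big)$, so, writing $F(v,\sigma):=\E\big[\id\{\beta_1>\sigma\}\,\Phi_h\big(v,(\beta_1-\sigma,\beta_2,\beta_3,\dots),(\eta_k)_k\big)\big]$, integrating out this block yields
\begin{align*}
\E\big[Z\,\psi(\X_x(t+h))\,\id\{N(t)=m\}\big]=\E\big[G_m\,\id\{a\leq t\}\,F(H_m,s)\big].
\end{align*}
Here the memorylessness of $\mathrm{Exp}(\theta)$ is the crux: for $\sigma\geq 0$ the conditional law of $\big((\beta_1-\sigma,\beta_2,\beta_3,\dots),(\eta_k)_k\big)$ given $\{\beta_1>\sigma\}$ is the unconditional law of $\big((\beta_k)_k,(\eta_k)_k\big)$, and $\P(\beta_1>\sigma)=e^{-\theta\sigma}$, whence $F(v,\sigma)=e^{-\theta\sigma}(P_h\psi)(v)$. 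On the other hand, since $\P(\beta_{m+1}>s\mid\mathcal G_m)=e^{-\theta s}$ on $\{a\leq t\}$, one has $\E[Z\,(P_h\psi)(\X_x(t))\,\id\{N(t)=m\}]=\E[G_m\,\id\{a\leq t\}\,e^{-\theta s}\,(P_h\psi)(H_m)]$, which coincides with the previous expression. Summing over $m\in\N_0$ gives $\E[Z\,\psi(\X_x(t+h))]=\E[Z\,(P_h\psi)(\X_x(t))]$, and (\ref{theorem_mpeq2}) follows.

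I expect the main obstacle to be the bookkeeping behind the shift identity: controlling the random index $N(\cdot)$ so that $Z\,\id\{N(t)=m\}$ and $\X_x(t)\,\id\{N(t)=m\}$ genuinely become functions of the finite block $(x,\beta_1,\dots,\beta_m,\eta_1,\dots,\eta_m)$, and establishing the joint measurability of $\Phi_h$ (a routine extension of Lemma \ref{lemma_meas}). The distributional core — the single line $F(v,\sigma)=e^{-\theta\sigma}(P_h\psi)(v)$ — is short once this scaffolding is in place, which is precisely why the exponentiality of the $\beta_m$ is indispensable here.
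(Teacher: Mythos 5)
Your proposal is correct, and its overall architecture (reduce everything to (\ref{theorem_mpeq2}); get (\ref{theorem_mpeq1}) by the tower property since $P(h,\X_x(t),B)$ is $\sigma(\X_x(t))$-measurable by \ref{theorem_mpenumi3}); get \ref{theorem_mpenumi4}) by applying (\ref{theorem_mpeq2}) to constant initials and taking expectations) coincides with the paper's. Where you genuinely diverge is in how (\ref{theorem_mpeq2}) itself is established. The paper decomposes according to the number of \emph{new} jumps, i.e.\ over $\{N(t+h)=N(t)+m\}$, expresses $\X_x(t+h)$ there through the iterated maps $F_m$ applied to $\X_x(t)$ and the post-$t$ data $(\alpha_{N(t)+1}-t,\beta_{N(t)+2},\dots,\eta_{N(t)+1},\dots)$, and then proves two distributional facts: (I) this post-$t$ data together with $N(t+h)-N(t)$ is equal in law to fresh data $(\alpha_1,\dots,\eta_1,\dots,N(h))$, and (II) it is independent of $\F_t^x$, the latter via the inclusion $\F_t^x\subseteq\Sigma_t$ into a $\sigma$-algebra built from the Poisson filtration and the $\eta$'s; the conditional expectation is then computed directly with the freezing lemma relative to $\F_t^x$. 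You instead partition over $\{N(t)=m\}$, reduce $\F_t^x$ to cylinder functionals $Z$ by a monotone class argument, write everything on $\{N(t)=m\}$ as a function of the finite block $(x,\beta_1,\dots,\beta_m,\eta_1,\dots,\eta_m)$ plus the independent tail block, and then need only the memorylessness of the single exponential $\beta_{m+1}$ (through $F(v,\sigma)=e^{-\theta\sigma}(P_h\psi)(v)$ and $\P(\beta_{m+1}>s\mid\mathcal G_m)=e^{-\theta s}$), packaging $\X_x(t+h)$ as the value at time $h$ of a process freshly generated from $\X_x(t)$ — a cleaner restart formulation than the $F_m$ bookkeeping. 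What each buys: your route avoids the paper's somewhat delicate independence statement (II) and the verification $\F_t^x\subseteq\Sigma_t$, at the price of the monotone class reduction and the finite-block measurability bookkeeping you correctly flag; the paper's route, conversely, produces (I) and (II) as reusable by-products (the post-$t$ data is independent of the whole past and distributed like fresh data), which is slightly more information than the conditional-expectation identity alone. Both hinge on the same probabilistic crux — exponential memorylessness — exactly as you note.
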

\begin{proof} The assertions \ref{theorem_mpenumi1}) and \ref{theorem_mpenumi2}) are trivial. Moreover, the third follows from Lemma \ref{lemma_meas}.\\
Proving the remaining assertions is more involved and will occupy us for some time. Let us start with some preparatory observations. To this end, let $t,h \in [0,\infty)$, $v \in V$ and $B \in \B(V)$ be given; and introduce $F_{m}:V\times [0,\infty)^{m}\times V^{m}\rightarrow V$, for all $m \in \mathbb{N}$, by $F_{1}(y,b,n):=T(b)y+n$ and $F_{m}(y,b_{1},..,b_{m},n_{1},..,n_{m}):=T(b_{m})F_{m-1}(y,b_{1},..,b_{m-1},n_{1},..,n_{m-1})+n_{m}$ for all $y,n,n_{1},..,n_{m}\in V$, $b_{1},..,b_{m}\in [0,\infty)$ and $m \in \mathbb{N}\setminus \{1\}$.\\
Appealing to Remark \ref{remark_measuc0sg} yields that $F_{1}$ is continuous and it then follows inductively that each $F_{m}$ has this property and is therefore $\B(V)\otimes \B([0,\infty)^{m})\otimes \B(V^{m})$-$\B(V)$-measurable.\\
Now, for the sake of space let $\hat{\eta}_{\tau,m}:=(\eta_{N(\tau)+1},..,\eta_{N(\tau)+m})$, for all $m \in \mathbb{N}$, $\tau \in [0,\infty)$ and\linebreak $\hat{\beta}_{\tau,m}:=(\alpha_{N(\tau)+1}-\tau,\beta_{N(\tau)+2},..,\beta_{N(\tau)+m})$ if $m \geq 2$ and $\hat{\beta}_{\tau,1}:=\alpha_{N(\tau)+1}-\tau$ for all $\tau \in [0,\infty)$ and let us prove inductively that
\begin{align}
\label{theorem_mpprroof1}
\x_{x,N(\tau)+m}=F_{m}(\X_{x}(\tau),\hat{\beta}_{\tau,m},\hat{\eta}_{\tau,m}),~\forall \tau \in [0,\infty),
\end{align}
almost surely for all $m \in \mathbb{N}$.\\ 
If $m=1$, we get by the semigroup property and Remark \ref{remark_xrepresentation} that 
\begin{align*}
\x_{x,N(\tau)+1}=T(\alpha_{N(\tau)+1}-\tau)\X_{x}(\tau)+\eta_{N(\tau)+1}=F_{1}(\X_{x}(\tau),\alpha_{N(\tau)+1}-\tau,\eta_{N(\tau)+1})
\end{align*}
almost surely. Moreover, if (\ref{theorem_mpprroof1}) holds for an $m \in \mathbb{N}$ we get
\begin{eqnarray*}
\x_{x,N(\tau)+m+1}
& = & ~ T(\beta_{N(\tau)+m+1})\x_{x,N(\tau)+m}+\eta_{N(\tau)+m+1} \\
& = & ~ T(\beta_{N(\tau)+m+1})F_{m}(\X_{x}(\tau),\hat{\beta}_{\tau,m},\hat{\eta}_{\tau,m})+\eta_{N(\tau)+m+1} \\
& = & ~ F_{m+1}(\X_{x}(\tau),\hat{\beta}_{\tau,m},\beta_{N(\tau)+m+1},\hat{\eta}_{\tau,m},\eta_{N(\tau)+m+1}),
\end{eqnarray*}
which yields (\ref{theorem_mpprroof1}). Consequently, on $\{N(\tau+h)=N(\tau)\}$, we have
\begin{align}
\label{theorem_mpprroof5}
\X_{x}(\tau+h)=T(\tau+h-\alpha_{N(\tau)})\x_{x,N(\tau)}=T(h)\X_{x}(\tau),~\forall \tau,h\in [0,\infty).
\end{align}
up-to a $\P$-null-set and on $\{N(\tau+h)=N(\tau)+m\}$, where $m \in \mathbb{N}$, we have
\begin{align}
\label{theorem_mpprroof6}
\X_{x}(\tau+h)=T(\tau+h-\alpha_{N(\tau)+m})F_{m}(\X_{x}(\tau),\hat{\beta}_{\tau,m},\hat{\eta}_{\tau,m}),~\forall \tau,h\in [0,\infty),
\end{align}
up-to a $\P$-null-set. These two results will turn out to be useful to prove (\ref{theorem_mpeq1}) and (\ref{theorem_mpeq2}). But before we can do so some distribution results have to be established, namely
\begin{enumerate}[I)]
	\item For all $m \in \mathbb{N}$, we have that $(\alpha_{N(t)+1}-t,..,\alpha_{N(t)+m}-t,\eta_{N(t)+1},..,\eta_{N(t)+m},N(t+h)-N(t))$ is in distribution equal to $(\alpha_{1},..,\alpha_{m},\eta_{1},..,\eta_{m},N(h))$.
	\item For all $m \in \mathbb{N}$, we have that $(\alpha_{N(t)+1},\beta_{N(t)+2},..,\beta_{N(t)+m},\eta_{N(t)+1},..,\eta_{N(t)+m},N(t+h)-N(t))$ is independent of $\F^{x}_{t}$.
\end{enumerate}
Proof of I). Let $z_{1},..,z_{m}\in [0,\infty)$, $B_{1},..,B_{m}\in \B(V)$ and $C \subseteq \mathbb{N}_{0}$. Then, as  $(\eta_{m})_{m \in \mathbb{N}}$ is i.i.d and independent of $(\alpha_{m})_{m\in \mathbb{N}}$ we get
\begin{eqnarray*}
	& & ~
	\P(\alpha_{N(t)+k}-t\leq z_{k},~\eta_{N(t)+k} \in B_{k},~k=1,..,m,~ N(t+h)-N(t)\in C) \\
	& = & ~ \P(\eta_{k} \in B_{k},~k=1,..,m)\sum \limits_{j=0}\limits^{\infty}\P(\alpha_{j+k}-t\leq z_{k},~k=1,..,m,~ N(t+h)-N(t)\in C,~N(t)=j)\\
		& = & ~ \P(\eta_{k} \in B_{k},~k=1,..,m)\P(N(z_{k}+t)-N(t)\geq k,~k=1,..,m,~ N(t+h)-N(t)\in C)
\end{eqnarray*}
where the last equality follows from  
\begin{align}
\label{theorem_mpprroof4}
\{\alpha_{k}\leq \tau\}=\{N(\tau)\geq k\},~\forall \tau \in [0,\infty),~k \in \mathbb{N}_{0},
\end{align}
up to a $\P$-null-set. Since $(N(t))_{t \geq 0}$ is a homogeneous Poisson process, it is now easily verified that the distribution of $(N(z_{1}+t)-N(t),..,N(z_{m}+t)-N(t),N(t+h)-N(t))$ is independent of $t$. Using this and (\ref{theorem_mpprroof4}) yields
\begin{align*}
\P(N(z_{k}+t)-N(t)\geq k,~k=1,..,m,~ N(t+h)-N(t)\in C) = \P(\alpha_{k}\leq z_{k},~k=1,..,m,~ N(h)\in C).
\end{align*}
Combining the preceding two calculations, while having in mind the independence of $(\eta_{m})_{m\in \mathbb{N}}$ and $(\alpha_{m})_{m\in \mathbb{N}}$ gives i). \\
Proof of II). Since $\beta_{N(t)+k}=\alpha_{N(t)+k}-\alpha_{N(t)+k-1}$ for all $k \in \mathbb{N}\setminus\{1\}$, it suffices to prove that $(\alpha_{N(t)+1},..,\alpha_{N(t)+m},\eta_{N(t)+1},..,\eta_{N(t)+m},N(t+h)-N(t))$ is independent of $\F_{t}^{x}$. The latter is obviously true if $(\alpha_{N(t)+1}-t,..,\alpha_{N(t)+m}-t,\eta_{N(t)+1},..,\eta_{N(t)+m},N(t+h)-N(t))$ is independent of $\F_{t}^{x}$.\\
Now introduce $\Sigma_{\tau}:=\sigma(A \cap B:~ A \in \Sigma^{N}_{\tau},~ B \in \sigma_{0}(\eta_{k},~k \in \mathbb{N}_{0}))$, for all $\tau \in [0,\infty)$, where $(\Sigma^{N}_{\tau})_{\tau \geq 0}$ denotes the completion of the natural filtration of $(N(\tau))_{\tau \geq 0}$ and $\eta_{0}:=x$ and let us prove that
\begin{align}
\label{theorem_mpprroof2}
\F^{x}_{t} \subseteq \Sigma_{t} \text{ and } \eta_{N(t)+j} \text{ is } \Sigma_{t}-\B(V)-\text{measurable for all }j \in \mathbb{N}.
\end{align}
The second assertion is clearly true, since
\begin{align*}
\{\eta_{N(t)+j}\in B\}= \bigcup\limits_{k=0}\limits^{\infty}\{\eta_{k+j}\in B,~N(t)=k\} \in \Sigma_{t},~\forall B \in \B(V).
\end{align*}
Now, note that $\Sigma_{t}$ contains by construction every $\P$-null-set. Consequently, the first assertion follows if $\X_{x}(s)$ is $\Sigma_{t}$-$\B(V)$-measurable, for each $s \in [0,t]$, which will be verified now: So let $s \in [0,t]$, then appealing to (\ref{theorem_mpprroof5}) and (\ref{theorem_mpprroof6}) (with $\tau=0$ and $h=s$ there) yields, for a given $B \in \B(V)$, that
\begin{align*}
\{X_{x}(s)\in B\}=\{T(s)x \in B,N(s)=0\}\cup\left( \bigcup\limits_{n=1}\limits^{\infty}\{T(s-\alpha_{n})F_{n}(x,\beta_{1},..,\beta_{n},\eta_{1},..,\eta_{n})\in B,~N(s)=n\}\right),
\end{align*}
up to a $\P$-null-set. It is plain that the first set in the preceding equation is an element of $\Sigma_{t}$. Moreover, for each $k \in \{1,..,n\}$ and $z \in [0,\infty)$ we have $\{\alpha_{k}\leq z,~N(s)=n\}=\{N(z)\geq k, N(s)=n\}$. If $z \leq s$, this set is clearly in $\Sigma^{N}_{s}$ and if $z>s$, we have $N(z)\geq N(s)$, which gives $\{N(z)\geq k, N(s)=n\}=\{N(s)=n\}\in \Sigma^{N}_{s}$; thus in any case $\{\alpha_{k}\leq z,~N(s)=n\}\in \Sigma_{t}$. Consequently, as $\beta_{k}=\alpha_{k}-\alpha_{k-1}$, we obtain that $\beta_{k}\id_{\{N(s)=n\}}$ is $\Sigma_{t}$-$\B([0,\infty))$-measurable, for all $k=1,..,n$ and $n \in \mathbb{N}$.\\
Hence, we get
\begin{eqnarray*}
& & ~
\{T(s-\alpha_{n})F_{n}(x,\beta_{1},..,\beta_{n},\eta_{1},..,\eta_{n})\in B,~N(s)=n\}\\
& = & ~ \{T(s-(\beta_{1}+..+\beta_{n})\id_{\{N(s)=n\}})F_{n}(x,\beta_{1}\id_{\{N(s)=n\}},..,\beta_{n}\id_{\{N(s)=n\}},\eta_{1},..,\eta_{n})\in B,~N(s)=n\},
\end{eqnarray*}
is in $\Sigma_{t}$, for all $n \in \mathbb{N}$ by the measurability of $F_{n}$ and $T$, which concludes the proof of (\ref{theorem_mpprroof2}).\\
Now let $n \in \mathbb{N}$ $z_{1},..,z_{m}\in [0,\infty)$, $B_{1},..B_{m},D_{1},..,D_{n}\in \B(V)$, $C \subseteq \mathbb{N}_{0}$ and $s_{1},..,s_{n}\in [0,t]$. As $(N(\tau))_{\tau \geq 0}$ is a Poisson process, it is clear that $(N(z_{1}+t)-N(t),..,N(z_{m}+t)-N(t),N(t+h)-N(t))$ is independent of  $\Sigma_{t}^{N}$. Consequently, as $\sigma_{0}(\eta_{k},~k \in \mathbb{N}_{0})$ is independent of all $\Sigma^{N}_{\tau}$, for $\tau \in [0,\infty)$, we get that $(N(z_{1}+t)-N(t),..,N(z_{m}+t)-N(t),N(t+h)-N(t))$ is independent of $\Sigma_{t}$ and that this random vector's distribution does not depend on $t$. Hence, employing (\ref{theorem_mpprroof4}) and (\ref{theorem_mpprroof2}) yields
\begin{eqnarray*}
	& & ~
\P(\alpha_{N(t)+k}-t\leq z_{k},\eta_{N(t)+k}\in B_{k},k=1,..,m,~N(t+h)-N(t) \in C,~\X_{x}(s_{j})\in D_{j},j=1,..,n)\\
& = & ~ \P(N(z_{k}+t)-N(t)\geq k,\eta_{N(t)+k}\in B_{k},k=1,..,m,~N(t+h)-N(t) \in C,\X_{x}(s_{j})\in D_{j},j=1,..,n)\\
& = & ~ \P(N(z_{k})\geq k,k=1,..,m,~N(h) \in C)\P(\eta_{N(t)+k}\in B_{k},k=1,..,m,~\X_{x}(s_{j})\in D_{j},j=1,..,n)\\
& = & ~ \P(\alpha_{k}\leq z_{k},~k=1,..,m,N(h) \in C)\P(\eta_{N(t)+k}\in B_{k},k=1,..,m,~\X_{x}(s_{j})\in D_{j},j=1,..,n).
\end{eqnarray*}
Moreover, we have
\begin{eqnarray*}
	& & ~
	\P(\eta_{N(t)+k}\in B_{k},k=1,..,m,\X_{x}(s_{j})\in D_{j},j=1,..,n)\\
	& = & ~ \sum \limits_{i=0}\limits^{\infty}\sum \limits_{i_{1},.,i_{n}=0}\limits^{i}\P(\eta_{i+k}\in B_{k},k=1,..,m,~ N(t)=i,N(s_{j})=i_{j},T(s_{j}-\alpha_{i_{j}})\x_{x,i_{j}}\in D_{j},j=1,..,n)
\end{eqnarray*}
Now it is easily verified that $\x_{x,i}$ is $\sigma(x,\beta_{1},..,\beta_{i},\eta_{1},..,\eta_{i})$ for  any $i \in \mathbb{N}_{0}$. (For $i=0$ this is trivial, for $i \in \mathbb{N}$ this follows from (\ref{theorem_mpprroof1}) by putting $\tau=0$ there.)\\
Consequently, since $i_{1},..,i_{n} \in \{0,..,i\}$ in the sum of the preceding calculation, we get by the imposed independence assumptions
\begin{align*}
\P(\eta_{N(t)+k}\in B_{k},k=1,..,m,\X_{x}(s_{j})\in D_{j},j=1,..,n)=\P(\eta_{k}\in B_{k},k=1,..,m)\P( \X_{x}(s_{j})\in D_{j},j=1,..,n).
\end{align*}
Finally, putting it all together while having in mind (I) yields
\begin{eqnarray*}
	& & ~
	\P(\alpha_{N(t)+k}-t\leq z_{k},\eta_{N(t)+k}\in B_{k},k=1,..,m,~N(t+h)-N(t) \in C,~\X_{x}(s_{j})\in D_{j},j=1,..,n)\\
	& = & ~ \P(\alpha_{k}\leq z_{k},\eta_{k}\in B_{k}~k=1,..,m,N(h) \in C,)\P( \X_{x}(s_{j})\in D_{j},j=1,..,n)\\
	& = & ~ \P(\alpha_{N(t)+k}-t\leq z_{k},\eta_{N(t)+k}\in B_{k}~k=1,..,m,N(t+h)-N(t) \in C,)\P( \X_{x}(s_{j})\in D_{j},j=1,..,n),
\end{eqnarray*}
which proves (II).\\
Now (\ref{theorem_mpeq1}) and (\ref{theorem_mpeq2}) will be deduced from I)-II) as well as (\ref{theorem_mpprroof5}) and (\ref{theorem_mpprroof6}).\\
Firstly, I) enables us to conclude that $(t+h-\alpha_{N(t)+m},\hat{\beta}_{t,m},\hat{\eta}_{t,m},N(t+h)-N(t))$ is in distribution equal to $(h-\alpha_{m},\hat{\beta}_{0,m},\hat{\eta}_{0,m},N(h))$, since $\beta_{N(t)+m}=(\alpha_{N(t)+m}-t)-(\alpha_{N(t)+m-1}-t)$, for all $m \in \mathbb{N}$.\\
Now, thanks to II) and I)  we can apply well known properties of conditional probabilities (cf. \cite[Prop. 1.43]{SOC}) in the following two calculations; where in the first line of the first calculation (\ref{theorem_mpprroof5}) and in the first line of the second one (\ref{theorem_mpprroof6}) is used.
\begin{eqnarray*}
\mathbb{E}(\id_{B}(\X_{x}(t+h))\id_{\{N(t+h)=N(t)\}}|\F_{t}^{x})(\omega)
& = & ~ \mathbb{E}(\id_{B}(T(h)\X_{x}(t))\id_{\{N(t+h)=N(t)\}}|\F_{t}^{x})(\omega)\\
& = &  ~ \int \limits_{\Omega} \id_{B}(T(h)\X_{x}(t,\omega))\id_{\{N(t+h)=N(t)\}}(\tilde{\omega}) \P(d\tilde{\omega})\\
& = &  ~ \int \limits_{\Omega} \id_{B}(T(h)\X_{x}(t,\omega))\id_{\{N(h)=0\}}(\tilde{\omega}) \P(d\tilde{\omega})\\
\end{eqnarray*}
and
\begin{eqnarray*}
	& & ~
	\mathbb{E}(\id_{B}(\X_{x}(t+h))\id_{\{N(t+h)=N(t)+m\}}|\F_{t}^{x})(\omega)\\
	& = & ~\mathbb{E}(\id_{B}(T(t+h-\alpha_{N(t)+m})F_{m}(\X_{x}(t),\hat{\beta}_{t,m},\hat{\eta}_{t,m}))\id_{\{N(t+h)=N(t)+m\}}|\F_{t}^{x})(\omega)\\
	& = &  ~ \int \limits_{\Omega} \id_{B}(T(t+h-\alpha_{N(t)+m}(\tilde{\omega}))F_{m}(\X_{x}(t,\omega),\hat{\beta}_{t,m}(\tilde{\omega}),\hat{\eta}_{t,m}(\tilde{\omega})))\id_{\{N(t+h)=N(t)+m\}}(\tilde{\omega}) \P(d\tilde{\omega})\\ 
	& = &  ~ \int \limits_{\Omega} \id_{B}(T(h-\alpha_{m}(\tilde{\omega}))F_{m}(\X_{x}(t,\omega),\hat{\beta}_{0,m}(\tilde{\omega}),\hat{\eta}_{0,m}(\tilde{\omega})))\id_{\{N(h)=m\}}(\tilde{\omega}) \P(d\tilde{\omega}),
\end{eqnarray*}
for all $m \in \mathbb{N}$ and $\P$-a.e. $\omega \in \Omega$.\\
Moreover, for any $v \in V$, it is easily verified by induction that $\x_{v,m}=F_{m}(v,\hat{\beta}_{0,m},\hat{\eta}_{0,m})$ for all $m \in \mathbb{N}$ a.s. Consequently, we get
\begin{eqnarray*}
P(h,v,B)
& = & ~\sum \limits_{m=0}\limits^{\infty}\mathbb{E}\left(\id_{B}(T(h-\alpha_{m})\x_{v,m})\id_{\{N(h)=m\}}\right)\\
& = & ~\mathbb{E}\left(\id_{B}(T(h)v)\id_{\{N(h)=0\}}\right)+\sum \limits_{m=1}\limits^{\infty}\mathbb{E}\left(\id_{B}(T(h-\alpha_{m})F_{m}(v,\hat{\beta}_{0,m},\hat{\eta}_{0,m}))\id_{\{N(h)=m\}}\right),
\end{eqnarray*}
for all $v \in V$. Hence, combining the preceding three calculations yields
\begin{align*}
\P(X_{x}(t+h)\in B|\F_{t}^{x})(\omega) = \sum \limits_{m=0}\limits^{\infty}\mathbb{E}(\id_{B}(\X_{x}(t+h))\id_{\{N(t+h)=N(t)+m\}}|\F_{t}^{x})(\omega)=P(h,\X_{x}(t,\omega),B)
\end{align*}
for $\P$-a.e. $\omega \in \Omega$, which proves (\ref{theorem_mpeq2}). Consequently, invoking \ref{theorem_mpenumi3}) gives that the random variable \linebreak$\P(X_{x}(t+h)\in B|\F_{t}^{x})$ is (after a possible modification on a $\P$-null-set) $\sigma(\X_{x}(t))$-$\B([0,1])$-measurable, which yields $\P(\X_{x}(t+h)\in B|\F^{x}_{t}) =\mathbb{E} (\P(\X_{x}(t+h)\in B|\F^{x}_{t})|\X_{x}(t))$ a.s., which implies (\ref{theorem_mpeq1}) by the tower property of conditional expectations.\\ 
Finally, as $x$ was arbitrary, (\ref{theorem_mpeq2}) holds for all independent initials, which is well known to imply \ref{theorem_mpenumi4}) - for the sake of completeness: Appealing to (\ref{theorem_mpeq2}) yields
\begin{align*}
P(t+h,v,B)=\mathbb{E}(\P(\X_{v}(t+h)\in B|\F^{v}_{t}))= \mathbb{E}P(h,\X_{v}(t),B)=\int \limits_{V}, P(h,\hat{v},B)P(t,v,d\hat{v}),
\end{align*}
for all $v \in V$, where the equality of the third and the fourth expression follow from the change of measure formula for expectations, which also holds for vector-valued random variables, see \cite[p. 25]{Billingsley}.
\end{proof}

\begin{remark} In the sequel $(Q(t))_{t \geq 0}$, where $Q(t):\text{BM}(V)\rightarrow \text{BM}(V)$, denotes the family of mappings, defined by
\begin{align}
\label{rema_qdefeq}
(Q(t)\psi)(v):= \mathbb{E}\psi(\X_{v}(t))= \int \limits_{V} \psi(\hat{v})P(t,v,d\hat{v}),
\end{align}
for all $\psi\in \text{BM}(V)$, $v \in V$ and $t \in [0,\infty)$.
\end{remark}

Now, this section concludes by deriving some basic properties of our Markov process. Particularly, the e-property established in the following lemma, opens the door to useful results which enable one to conclude that a (transition function of a) Markov process on a polish state space possesses a unique invariant probability measure, see \cite{Szarek} for more details.

\begin{lemma}\label{lemma_basicprop} The family of mappings $(Q(t))_{t \geq 0}$ has the Feller and the e-property, that is
\begin{enumerate}[i)]
	\item \label{lemma_basicpropmpenumi1} Feller Property: $Q(t)\psi \in C_{b}(V)$ for all $\psi \in C_{b}(V)$.
	\item \label{lemma_basicpropmpenumi2} e-property: For all $\psi \in \text{Lip}_{b}(V)$, $v \in V$ and $\varepsilon>0$, there is a $\delta>0$, such that for all $\hat{v}\in V$, with $||\hat{v}-v||_{V}<\delta$, we have $|(Q(t)\psi)(v)-(Q(t)\psi)(\hat{v})|<\varepsilon$ for all $t \geq 0$.
\end{enumerate} 
Moreover, the following assertions hold for any independent initial $x \in \mathcal{M}(\Omega;V)$.
\begin{enumerate}[i)]\setcounter{enumi}{2}
	\item \label{lemma_basicpropmpenumi3} $(\X_{x}(t))_{t\geq 0}$ has almost surely c\`{a}dl\`{a}g paths and is continuous in probability.
	\item \label{lemma_basicpropmpenumi4} The mapping $[0,\infty) \ni t \mapsto \mathbb{E}\psi(\X_{x}(t))$ is continuous, whenever $\psi \in C_{b}(V)$; in particular, $(Q(\cdot)\psi)(v)$ is continuous for all $\psi \in C_{b}(V)$ and $v \in V$.
	\item \label{lemma_basicpropmpenumi5} The filtration $(\F^{x}_{t})_{t\geq 0}$ fulfills the usual conditions, i.e. it is complete and right right-continuous.
	\item \label{lemma_basicpropmpenumi6} The stochastic process $(\X_{x}(t))_{t\geq 0}$ is $(\F^{x}_{t})_{t\geq 0}$-progressive.
\end{enumerate}
\end{lemma}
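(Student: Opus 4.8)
The plan is to prove the six assertions essentially separately. Parts i) and ii) are pathwise statements about $(Q(t))_{t\ge 0}$ that follow at once from the contraction estimate (\ref{lemma_twoprocessboundeq}) of Lemma \ref{lemma_xcont}: every $v\in V$ is a (deterministic) independent initial, so $\X_{v}$ and $\X_{\hat v}$ are the processes generated by $((\beta_{m}),(\eta_{m}),v,T)$ and $((\beta_{m}),(\eta_{m}),\hat v,T)$, and Lemma \ref{lemma_xcont} applied with $\hat\eta_{m}:=\eta_{m}$ gives $\|\X_{v}(t)-\X_{\hat v}(t)\|_{V}\le\|v-\hat v\|_{V}$ on all of $\Omega$ for every $t\ge 0$. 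For $\psi\in\text{Lip}_{b}(V)$ this yields $|(Q(t)\psi)(v)-(Q(t)\psi)(\hat v)|\le\mathbb{E}|\psi(\X_{v}(t))-\psi(\X_{\hat v}(t))|\le L_{\psi}\|v-\hat v\|_{V}$ uniformly in $t$, so $\delta:=\varepsilon/(L_{\psi}+1)$ proves ii). For i), recall from the proof of Lemma \ref{lemma_meas} that $v\mapsto\X_{v}(t,\omega)$ is continuous for every $(t,\omega)$; hence for $\psi\in C_{b}(V)$ and $v_{n}\to v$ dominated convergence gives $(Q(t)\psi)(v_{n})=\mathbb{E}\psi(\X_{v_{n}}(t))\to\mathbb{E}\psi(\X_{v}(t))=(Q(t)\psi)(v)$, and $|Q(t)\psi|\le\|\psi\|_{\infty}$, so $Q(t)\psi\in C_{b}(V)$.

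For iii), right-continuity of $t\mapsto\X_{x}(t,\omega)$ was already shown within the proof of Lemma \ref{lemma_meas} (for a random initial note that $\X_{x}(t,\omega)=\X_{x(\omega)}(t,\omega)$, which reduces matters to the constant-initial case). For the left limits, work on the full-measure set $\tilde\Omega$ of Remark \ref{remark_xrepresentation}: given $t>0$, if $t\notin\{\alpha_{m}(\omega):m\in\N\}$ then $\X_{x}(\cdot,\omega)$ is continuous at $t$, while if $t=\alpha_{m}(\omega)$ then for $s\uparrow t$ we have $\X_{x}(s,\omega)=T(s-\alpha_{m-1}(\omega))\x_{x,m-1}(\omega)\to T(\beta_{m}(\omega))\x_{x,m-1}(\omega)$ by time-continuity of $T$, so the left limit exists (the jump at $\alpha_{m}(\omega)$ being $\eta_{m}(\omega)$); thus $\X_{x}$ is almost surely c\`{a}dl\`{a}g. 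Since $\alpha_{m}$ is $\Gamma(m,\theta)$-distributed, hence atomless, $\P(t\in\{\alpha_{m}:m\in\N\})=0$, so for $\P$-a.e. $\omega$ the path $\X_{x}(\cdot,\omega)$ is continuous at $t$; therefore $\X_{x}(s)\to\X_{x}(t)$ almost surely, and a fortiori in probability, as $s\to t$, which is iii). Part iv) follows immediately: for $\psi\in C_{b}(V)$ we get $\psi(\X_{x}(s))\to\psi(\X_{x}(t))$ a.s. and boundedly, so $\mathbb{E}\psi(\X_{x}(s))\to\mathbb{E}\psi(\X_{x}(t))$; taking $x=v$ gives continuity of $(Q(\cdot)\psi)(v)$.

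For v), completeness of $(\F^{x}_{t})_{t\ge 0}$ is immediate since each $\F^{x}_{t}$ is by definition a completion. For right-continuity, fix $t\ge 0$; by the functional monotone class theorem it suffices to show that $\mathbb{E}[Z\mid\F^{x}_{t+}]$ is $\F^{x}_{t}$-measurable for $Z=\prod_{i=1}^{n}f_{i}(\X_{x}(t_{i}))$ with $t_{1}\le\dots\le t_{n}$ and $f_{i}\in\text{Lip}_{b}(V)$, because such $Z$ generate $\sigma(\X_{x}(\tau):\tau\ge 0)$. After pulling out the factors with $t_{i}\le t$, it remains to treat $\prod_{t_{i}>t}f_{i}(\X_{x}(t_{i}))$. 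For $\epsilon>0$ small, iterating the Markov property (\ref{theorem_mpeq2}) represents $\mathbb{E}[\prod_{t_{i}>t}f_{i}(\X_{x}(t_{i}))\mid\F^{x}_{t+\epsilon}]$ as $\Phi_{\epsilon}(\X_{x}(t+\epsilon))$, where $\Phi_{\epsilon}$ is obtained from the $f_{i}$ by alternately multiplying and applying the operators $Q(\cdot)$; by the estimate established in ii), each $Q(s)$ maps $\text{Lip}_{b}(V)$ into itself without increasing Lipschitz constants, so the $\Phi_{\epsilon}$ lie in $\text{Lip}_{b}(V)$ with Lipschitz constants bounded uniformly in $\epsilon$, and $\Phi_{\epsilon}\to\Phi_{0}$ pointwise by iv). Letting $\epsilon\downarrow 0$, the left-hand sides converge in $L^{1}$ to $\mathbb{E}[\,\cdot\mid\F^{x}_{t+}]$ by backward martingale convergence, while on the right $\Phi_{\epsilon}(\X_{x}(t+\epsilon))\to\Phi_{0}(\X_{x}(t))$ thanks to $\X_{x}(t+\epsilon)\to\X_{x}(t)$ (part iii), the uniform Lipschitz bound and the pointwise convergence $\Phi_{\epsilon}\to\Phi_{0}$. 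Hence $\mathbb{E}[Z\mid\F^{x}_{t+}]$ is a Borel function of $(\X_{x}(t_{i}))_{t_{i}\le t}$, so $\F^{x}_{t+}=\F^{x}_{t}$. Finally, vi) is then automatic: $\X_{x}$ is $(\F^{x}_{t})$-adapted and, by iii), has right-continuous paths, hence is progressively measurable by the classical criterion for adapted right-continuous processes.

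The parts i)--iv) are routine bookkeeping around Lemma \ref{lemma_xcont}, Remark \ref{remark_xrepresentation} and dominated convergence. The genuine obstacle is the right-continuity of the filtration in v): it is the only point where the Markov property and the Feller and e-properties must be combined, and the limit $\epsilon\downarrow 0$ needs care because $\X_{x}(t+\epsilon)$ is a moving evaluation point — it is exactly the e-property, in the form of the Lipschitz bound for $Q(s)$ that is uniform in $s$, which keeps the future functionals $\Phi_{\epsilon}$ uniformly Lipschitz and thereby makes the passage to the limit legitimate.
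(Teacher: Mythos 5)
Your proposal is correct, and parts i), ii), iv) and vi) run essentially as in the paper (Lemma \ref{lemma_xcont} plus dominated convergence for the Feller and e-properties, and the standard ``adapted with right-continuous paths implies progressive'' criterion). Where you genuinely diverge is in iii) and, above all, in v). For iii), the paper proves left-continuity in probability by conditioning on $\{N(t)=N(t_{0})\}$ and invoking the stochastic continuity of the Poisson process together with the time-continuity of $T$; you instead observe that the only possible discontinuities of the a.s.\ c\`adl\`ag paths are the arrival times $\alpha_{m}$, whose laws are atomless, so that for each fixed $t$ the path is a.s.\ continuous at $t$ and one even gets a.s.\ convergence $\X_{x}(s)\rightarrow\X_{x}(t)$ -- a slightly stronger and arguably cleaner statement, which also streamlines your iv). For v), the paper simply cites Doob's theorem on the right-continuity of the completed natural filtration of a process with right-continuous paths whose transition semigroup is Feller, whereas you reprove this fact from scratch: iterated use of the Markov property (\ref{theorem_mpeq2}) to write conditional expectations of cylinder functionals as $\Phi_{\epsilon}(\X_{x}(t+\epsilon))$, the uniform-in-$s$ Lipschitz bound for $Q(s)$ (exactly the e-property estimate of ii)) to keep the $\Phi_{\epsilon}$ uniformly Lipschitz, part iv) for the pointwise convergence $\Phi_{\epsilon}\rightarrow\Phi_{0}$, backward martingale convergence for the left-hand side, and a functional monotone class argument to pass from cylinder functionals to all of $\F^{x}_{t+}$ (the completeness of $\F^{x}_{t}$ absorbing the null-set modifications). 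This buys self-containedness and makes visible precisely where the Feller/e-properties enter, at the cost of more bookkeeping (the monotone class step and the reduction of $\F^{x}_{t+}$ to conditional expectations of $\sigma(\X_{x}(\tau);\tau\geq 0)$-measurable functionals are only sketched, though they are standard); the paper's citation of Doob is shorter but leaves the mechanism hidden. I see no genuine gap in your argument.
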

\begin{proof} The required boundedness in \ref{lemma_basicpropmpenumi1}) is plain and the desired continuity follows from Lemma  \ref{lemma_xcont} and dominated convergence.\\
Proof of \ref{lemma_basicpropmpenumi2}). Let $\psi \in \text{Lip}_{b}(V)$ and assume that it is not constantly zero, since the claim is trivial in this case. Moreover, let $v \in V$ and $\varepsilon>0$ be given and introduce $\delta:= \frac{\varepsilon}{2L_{\psi}}$. Then employing the services of Lemma \ref{lemma_xcont} once more yields 
\begin{align*}
|(Q(t)\psi)(v)-(Q(t)\psi)(\hat{v})| \leq L_{\psi}\mathbb{E}||\X_{v}(t)-\X_{\hat{v}}(t)||_{V}\leq L_{\psi}||v-\hat{v}||_{V} < \varepsilon,~\forall t \geq 0
\end{align*}
for all $\hat{v}\in V$, with $||v-\hat{v}||_{V}<\delta$, which proves \ref{lemma_basicpropmpenumi2}).\\
Proof of \ref{lemma_basicpropmpenumi3}). It follows analogously to the beginning of the proof of Lemma \ref{lemma_meas} that
\begin{align}
\label{lemma_basicpropeq1}
[0,\infty) \ni t \mapsto \X_{x}(t,\omega) \text{ is right continuous for all }\omega \in \Omega.
\end{align}
Moreover, it is easily verified that the left limits exists on the set $\tilde{\Omega}:=\{\omega \in \Omega:~\lim \limits_{m \rightarrow \infty }\alpha_{m}(\omega)=\infty\}$, which is by the strong law of large numbers a set of full $\P$-measure. Consequently, it remains to prove the continuity in probability, which is in light of (\ref{lemma_basicpropeq1}) true, if $\X_{x}$ is left-continuous in probability. So let $t_{0}\in (0,\infty)$, $t \in [0,t_{0}]$ and $\varepsilon>0$ and note that
\begin{align*}
\P(||\X_{x}(t)-\X_{x}(t_{0})||_{V}>\varepsilon)\leq \P(||\X_{x}(t)-\X_{x}(t_{0})||_{V}>\varepsilon,~N(t)=N(t_{0}))+ P(|N(t)-N(t_{0})|\geq 1).
\end{align*}
Moreover, the contractivity of $(T(t))_{t\geq 0}$ yields
\begin{eqnarray*}
|| \X_{x}(t)-\X_{x}(t_{0})||_{V}
& = & ~ ||T(t-\alpha_{N(t_{0})})\x_{x,N(t_{0})}-T(t-\alpha_{N(t_{0})})T(t_{0}-t)\x_{x,N(t_{0})}||_{V}\\
& \leq & ~ ||\x_{x,N(t_{0})}-T(t_{0}-t)\x_{x,N(t_{0})}||_{V}\\
\end{eqnarray*}
on $\{N(t)=N(t_{0})\}$, up-to a $\P$-null-set. Conclusively, as Poisson processes are well-known to be stochastically continuous and as $(T(t))_{t\geq 0}$ is time-continuous, we get
\begin{align*}
\lim \limits_{t \nearrow t_{0}}\P(||\X_{x}(t)-\X_{x}(t_{0})||_{V}>\varepsilon)\leq \lim \limits_{t \nearrow t_{0}} \P(||\x_{x,N(t_{0})}-T(t_{0}-t)\x_{x,N(t_{0})}||_{V}>\varepsilon,~N(t)=N(t_{0})) = 0,
\end{align*}
which proves \ref{lemma_basicpropmpenumi3}).\\ 
Proof of \ref{lemma_basicpropmpenumi4}). Let $(t_{m})_{m \in \mathbb{N}}$ be converging to a given $t \in [0,\infty)$. Then, we get by \ref{lemma_basicpropmpenumi3}) (and by passing to a subsequence if necessary) that $\lim \limits_{m \rightarrow \infty}||\X_{x}(t_{m})-\X_{x}(t)||_{V}=0$ almost surely. Consequently, $\lim \limits_{m \rightarrow \infty} \psi(\X_{x}(t_{m}))=\psi(\X_{x}(t))$ a.s. and by dominated convergence also in $L^{1}(\Omega)$, which gives \ref{lemma_basicpropmpenumi4}).\\
Finally, the desired completeness in \ref{lemma_basicpropmpenumi5}) holds by construction, the right-continuity follows from \cite[Theorem, p. 556]{Doob}, which is indeed applicable due to \ref{lemma_basicpropmpenumi1}) and (\ref{lemma_basicpropeq1}); and \ref{lemma_basicpropmpenumi6}) follows from  \ref{lemma_basicpropmpenumi5}) and (\ref{lemma_basicpropeq1}) by \cite[Prop. 2.2.3]{SIBS}.
\end{proof}

\section{The SLLN and the CLT}
\label{sec_sllnclt}
Let the notations of the previous section prevail, that means: $(V,||\cdot||_{V})$ is a separable Banach space and $((\eta_{m})_{m \in \mathbb{N}},(\beta_{m})_{m \in \mathbb{N}},T)$ is a fixed triplet, where  $(\eta_{m})_{m \in \mathbb{N}}\subseteq \mathcal{M}(\Omega;V)$ is an i.i.d. sequence,  $(\beta_{m})_{m \in \mathbb{N}}$ is an i.i.d. sequence which is independent of $(\eta_{m})_{m \in \mathbb{N}}$ and  each  $\beta_{m}$ is exponentially distributed with parameter $\theta \in (0,\infty)$, and $(T(t))_{t \geq 0}$ is a time-continuous contractive semigroup on $V$. Moreover, $(N(t))_{t \geq 0}$ is the Poisson process arising from $(\beta_{m})_{m \in \mathbb{N}}$ and $(\alpha_{m})_{m \in \mathbb{N}_{0}}$ is the process' sequence of arrival times.\\
Again we refer to an $x \in \mathcal{M}(\Omega;V)$ which is independent of $((\eta_{m})_{m \in \mathbb{N}},(\beta_{m})_{m \in \mathbb{N}})$ as an independent initial, and denote by $\X_{x}$ and $(\x_{x,m})_{m \in \mathbb{N}_{0}}$ the sequence and the process generated by $((\beta_{m})_{m \in \mathbb{N}},(\eta_{m})_{m \in \mathbb{N}},x,T)$ in $V$.\\
Finally, set $P(t,v,B):=\P(\X_{v}(t)\in B)$ and $(Q(t)\psi)(v):= \mathbb{E}\psi(\X_{v}(t))$, for every $v \in V$, $t \in [0,\infty)$, $B \in \B(V)$ and $\psi \in \text{BM}(V)$.\\
In addition, we assume throughout this entire section that
\begin{align}
\label{eq_etal2int}
||\eta_{k}||_{V} \in L^{2}(\Omega),~\forall k \in \mathbb{N},
\end{align}
where $L^{q}(\Omega):=L^{q}(\Omega,\F,\P)$ for every $q \in [1,\infty)$. Moreover, we impose the following assumption regarding $(T(t))_{t \geq 0}$.

\begin{assumption}\label{assumption} There is a separable Banach space $(W,||\cdot||_{W})$, with $ W \subseteq V$, such that the following assertions hold.
\begin{enumerate}[i)]
	\item\label{assumption_enumi1} The injection $W \hookrightarrow V$ is continuous and $W$ is dense in $(V,||\cdot||_{V})$. 
	\item\label{assumption_enumi2} $T$ is invariant with respect to $W$, that is $T(t)w \in W$ for all $w \in W$ and $t \in [0,\infty)$.
	\item\label{assumption_enumi3} There are constants $\kappa,\rho \in (0,\infty)$ such that $||T(t)w_{1}-T(t)w_{2}||_{W}\leq \left(\kappa t+||w_{1}-w_{2}||_{W}^{-\frac{1}{\rho}}\right)^{-\rho}$ for all $w_{1},w_{2}\in W$ and $t \in [0,\infty)$.
	\item \label{assumption_enumi4} $T(t)0=0$ for all $t \in [0,\infty)$.
\end{enumerate}
\end{assumption}

Throughout this entire section, Assumption \ref{assumption} is assumed to hold; particularly, $(W,||\cdot||_{W})$ and $\kappa,\rho \in (0,\infty)$ are such that \ref{assumption}.\ref{assumption_enumi1})-\ref{assumption_enumi3}) are fulfilled. In addition, $C> 0$ denotes the operator norm of the injection $W \hookrightarrow V$; hereby, we exclude the trivial case $C=0$, since $C=0$ implies $W=\{0\}$ and by density $V=\{0\}$.\\
In Assumption \ref{assumption}.\ref{assumption_enumi3}) we did not assume $w_{1}\neq w_{2}$. If $w_{1}=w_{2}$, we set $\big(\kappa t+||w_{1}-w_{2}||_{W}^{-\frac{1}{\rho}}\big)^{-\rho}:=0$, which is reasonable, since: For any $x \in [0,\infty)$ the mapping $(0,\infty) \ni y \mapsto \left(x+y^{-\frac{1}{\rho}}\right)^{-\rho}$ can be extended continuously by zero in $y=0$.\\
Moreover, Assumption\ref{assumption}.\ref{assumption_enumi1}) yields that $W \in \B(V)$ and that if $Y \in \mathcal{M}(\Omega;V)$, with $\P(Y\in W)=1$ then the real-valued mapping $||Y||_{W}$ is up-to a $\P$-null-set well-defined and $\B(V)$-$\B(\R)$-measurable, see \cite[Remark 2.7]{ich3}.\\

The following estimates will play a fundamental role in this entire section, it is needed in the proofs of all of our main results, which are: Proposition \ref{prop_uniqueinvpropmeas}, Theorem \ref{theorem_slln}, Theorem \ref{theorem_clt} and Corollary \ref{corollary_clt}. The remaining results of this section simply serve to keep the exposition more structured, but are probably not of independent interest.\\
As mentioned introductory, proving the CLT requires the additional assumption $\rho>\frac{1}{2}$. It will be stated explicitly whenever this additional assumption is needed.

\begin{lemma}\label{lemma_fundamentalestimate} Let $x \in \mathcal{M}(\Omega;V)$ be an independent initial. Then the inequality
\begin{align}
\label{lemma_fundamentalestimateeq1}
||\X_{x}(t)||_{V} \leq C \kappa^{-\rho}(t-\alpha_{N(t)})^{-\rho},~\forall t>0,
\end{align}
takes place with probability one. In addition, if $y \in \mathcal{M}(\Omega;V)$ is another independent initial we have
\begin{align}
\label{lemma_fundamentalestimateeq2}
||\X_{x}(t)-\X_{y}(t)||_{V} \leq C \kappa^{-\rho}t^{-\rho},~\forall t>0,
\end{align}
almost surely.
\end{lemma}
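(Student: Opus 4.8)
The plan is to reduce both inequalities to purely deterministic statements about $T$ and then invoke the pathwise representation $\X_{x}(t)=T(t-\alpha_{N(t)})\x_{x,N(t)}$ from Remark \ref{remark_xrepresentation}. The first ingredient I would record is the elementary consequence of Assumption \ref{assumption} that $\|T(s)v\|_{V}\le C\kappa^{-\rho}s^{-\rho}$ for every $v\in V$ and $s>0$. To see this, use that $W$ is dense in $(V,\|\cdot\|_{V})$ (Assumption \ref{assumption}.\ref{assumption_enumi1})) to pick $w_{n}\in W$ with $w_{n}\to v$ in $V$; Assumption \ref{assumption}.\ref{assumption_enumi3}) applied with the second argument equal to $0$, together with Assumption \ref{assumption}.\ref{assumption_enumi4}), gives $\|T(s)w_{n}\|_{W}\le\big(\kappa s+\|w_{n}\|_{W}^{-1/\rho}\big)^{-\rho}\le(\kappa s)^{-\rho}$, whence $\|T(s)w_{n}\|_{V}\le C(\kappa s)^{-\rho}$; since $T(s)$ is a contraction, $T(s)w_{n}\to T(s)v$ in $V$, and the bound follows by letting $n\to\infty$. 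Applying it with $s=t-\alpha_{N(t)}$ and $v=\x_{x,N(t)}$ on the almost sure event of Remark \ref{remark_xrepresentation} proves (\ref{lemma_fundamentalestimateeq1}); the inequality is vacuous when $t=\alpha_{N(t)}$, its right-hand side then being $+\infty$.

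For (\ref{lemma_fundamentalestimateeq2}) I would establish the following pathwise statement. Let $m\in\mathbb{N}_{0}$, let $v_{1},v_{2},z_{1},\dots,z_{m}\in V$ and $s_{1},\dots,s_{m+1}\ge 0$ with $\tau:=s_{1}+\cdots+s_{m+1}>0$, and for $i=1,2$ form the chains $u_{i,0}:=v_{i}$, $u_{i,k}:=T(s_{k})u_{i,k-1}+z_{k}$ for $k=1,\dots,m$, and $u_{i,m+1}:=T(s_{m+1})u_{i,m}$. Then $\|u_{1,m+1}-u_{2,m+1}\|_{V}\le C\kappa^{-\rho}\tau^{-\rho}$. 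Granting this, one applies it on the almost sure event of Remark \ref{remark_xrepresentation} — which is valid simultaneously for $\X_{x}$ and $\X_{y}$, since both are built from the same sequence $(\beta_{m})_{m}$ and hence the same $N$ — with $m=N(t)$, $v_{1}=x$, $v_{2}=y$, $z_{k}=\eta_{k}$ and $s_{k}=\beta_{k}$ for $k\le m$, and $s_{m+1}=t-\alpha_{m}$: then $\tau=\alpha_{m}+(t-\alpha_{m})=t$, while $u_{1,m+1}=\X_{x}(t)$ and $u_{2,m+1}=\X_{y}(t)$, which yields (\ref{lemma_fundamentalestimateeq2}).

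To prove the pathwise statement, first assume $v_{i},z_{k}\in W$. Then every $u_{i,k}$ lies in $W$, by the $T$-invariance of $W$ (Assumption \ref{assumption}.\ref{assumption_enumi2})) and because $W$ is a vector space, and in each difference $u_{1,k}-u_{2,k}=T(s_{k})u_{1,k-1}-T(s_{k})u_{2,k-1}$ the additive term $z_{k}$ has cancelled. Setting $\delta_{k}:=\|u_{1,k}-u_{2,k}\|_{W}$ and rearranging Assumption \ref{assumption}.\ref{assumption_enumi3}) (reading $r^{-1/\rho}:=+\infty$ when $r=0$, consistently with the paper's convention) gives $\delta_{k}^{-1/\rho}\ge\kappa s_{k}+\delta_{k-1}^{-1/\rho}$ for $k=1,\dots,m+1$; iterating yields $\delta_{m+1}^{-1/\rho}\ge\kappa\tau+\delta_{0}^{-1/\rho}\ge\kappa\tau$, i.e.\ $\delta_{m+1}\le\kappa^{-\rho}\tau^{-\rho}$, and multiplying by the operator norm $C$ of $W\hookrightarrow V$ converts this into the claimed $V$-bound. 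For general $v_{i},z_{k}\in V$ one repeats the density argument: approximate each $v_{i}$ and each $z_{k}$ in $V$ by elements of $W$; the corresponding $W$-chains obey the bound just proved, and because each $T(s_{k})$ is $1$-Lipschitz on $V$ a one-line induction shows these $W$-chains converge in $V$ to the original $V$-chains, so the bound passes to the limit.

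The one genuinely delicate point is this last reduction: the sharp contraction estimate of Assumption \ref{assumption}.\ref{assumption_enumi3}) is available only in $W$, whereas the increments $\eta_{k}$ (and the initials $x,y$) are merely $V$-valued, so the recursion cannot be run directly in $W$; the approximation must be set up so that the approximating chains remain in $W$ — making Assumption \ref{assumption}.\ref{assumption_enumi3}) applicable to them — while still converging in $V$ to $\X_{x}$ and $\X_{y}$. Everything else is routine: the telescoping of the bounds of the form $\big(\kappa t+\|\cdot\|_{W}^{-1/\rho}\big)^{-\rho}$, which is exactly the algebraic feature of that functional form being exploited, and the bookkeeping identity $s_{1}+\cdots+s_{m+1}=t$.
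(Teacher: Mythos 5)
Your proof is correct, and it reaches both estimates by the same underlying mechanism as the paper (run the decay estimate of Assumption \ref{assumption}.\ref{assumption_enumi3}) through the recursion in $W$, then use density of $W$ and the $V$-contractivity of $T$ to pass to general data), but the packaging differs in two ways worth noting. For (\ref{lemma_fundamentalestimateeq1}) you first extract the uniform bound $\|T(s)v\|_{V}\le C\kappa^{-\rho}s^{-\rho}$ valid for \emph{every} $v\in V$, and then simply evaluate it at $v=\x_{x,N(t)}$, $s=t-\alpha_{N(t)}$; the paper instead approximates the whole process (initial \emph{and} noise) by a $W$-valued process and controls the error via Lemma \ref{lemma_xcont}. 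Your observation that the bound is uniform in $v$ makes that detour unnecessary. For (\ref{lemma_fundamentalestimateeq2}) your argument is the paper's argument in pathwise form: because the inequality to be proved is, for each fixed $\omega$ in the full-measure event of Remark \ref{remark_xrepresentation}, a purely deterministic statement about finitely many elements of $V$, you may approximate those elements by members of $W$ separately for each $\omega$, so you never need the measurable selections $\Gamma_{n}$ of \cite[Lemma 3.12]{ich3} nor Lemma \ref{lemma_xcont} as a statement about stochastic processes (the paper uses both). The trade-off is that the paper's construction of the auxiliary processes $X^{n}$, $Y^{n}$ is reusable elsewhere, whereas your deterministic chain lemma is lighter and makes the telescoping $\delta_{k}^{-1/\rho}\ge\kappa s_{k}+\delta_{k-1}^{-1/\rho}$, hence $\delta_{m+1}\le(\kappa\tau)^{-\rho}$ with $\tau=t$, completely explicit; your handling of the degenerate cases ($t=\alpha_{N(t)}$ in the first bound, $\delta_{k}=0$ via the convention $0^{-1/\rho}=+\infty$) is also consistent with the paper's conventions.
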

\begin{proof} Let us start by proving (\ref{lemma_fundamentalestimateeq1}). To this end, let $(\tilde{\eta}_{m})_{m \in \mathbb{N}}\subseteq \mathcal{M}(\Omega;V)$ and $\tilde{x}\in \mathcal{M}(\Omega;V)$, assume $\tilde{\eta}_{m},\tilde{x}\in W$ almost surely and introduce $(\tilde{X}(t))_{t\geq 0}$ and $(\tilde{x}_{m})_{m \in \mathbb{N}_{0}}$ as the process and the sequence generated by  $((\beta_{m})_{m \in \mathbb{N}},(\tilde{\eta}_{m})_{m \in \mathbb{N}},\tilde{x},T)$ in $V$, respectively.\\
Then, note that $\tilde{x}_{m} \in W$ for all $m \in \mathbb{N}_{0}$ almost surely, since: If $m=0$ this is trivial and if it holds for an $m \in \mathbb{N}_{0}$, we have $\tilde{x}_{m+1}=T(\beta_{m+1})\tilde{x}_{m}+\tilde{\eta}_{m+1}$ and both summands are elements of $W$, for the first this follows from Assumption \ref{assumption}.\ref{assumption_enumi2}) and the induction hypothesis, and for the second this holds by construction.\\
Consequently, appealing to Assumption \ref{assumption}.\ref{assumption_enumi2}) again yields $\tilde{X}(t) \in W$ for all $t\geq 0$, almost surely, since clearly $\tilde{X}(t)=T(t-\alpha_{N(t)})\tilde{x}_{N(t)}$ for all $t \geq 0$ with probability one.\\
Hence, employing Assumption \ref{assumption}.\ref{assumption_enumi3}) and \ref{assumption_enumi4}) yields
\begin{align}
\label{lemma_fundamentalestimateproof1}
||\tilde{X}(t)||_{V}\leq C||T(t-\alpha_{N(t)})\tilde{x}_{N(t)}||_{W} \leq C \left(\kappa (t-\alpha_{N(t)})+||\tilde{x}_{N(t)}||_{W}^{-\frac{1}{\rho}}\right)^{-\rho} \leq C\kappa^{-\rho}(t-\alpha_{N(t)})^{-\rho},
\end{align}
for all $t>0$ almost surely.\\
Now let us infer (\ref{lemma_fundamentalestimateeq1}) from (\ref{lemma_fundamentalestimateproof1}). To this end, let $(\Gamma_{n})_{n \in \mathbb{N}}$, where $\Gamma_{n}:V\rightarrow V$, be a sequence of $\B(V)$-$\B(V)$-measurable mappings, such that
\begin{align}
\label{lemma_fundamentalestimateproof2}
\Gamma_{n}(V)\subseteq W,~\forall n \in \mathbb{N} \text{ and } \lim \limits_{n \rightarrow \infty} \Gamma_{n}(v)=v,~\forall v \in V.
\end{align}
Since $W$ is dense in $(V,||\cdot||_{V})$, such a sequence exists, see \cite[Lemma 3.12]{ich3}. Now, for every $n \in \mathbb{N}$, let $(X^{n}(t))_{t \geq 0}$ be the process generated by $((\beta_{m})_{m \in \mathbb{N}},\left(\Gamma_{n}(\eta_{m})\right)_{m \in \mathbb{N}},\Gamma_{n}(x),T)$ in $V$. Then, as $\Gamma_{n}(V)\subseteq W$, (\ref{lemma_fundamentalestimateproof1}) yields $||X^{n}(t)||_{V}\leq C\kappa^{-\rho}(t-\alpha_{N(t)})^{-\rho}$ for all $t>0$ and $n \in \mathbb{N}$ almost surely. (If one sets $(\tilde{\eta}_{m})_{m \in \mathbb{N}}=\left(\Gamma_{n}(\eta_{m})\right)_{m \in \mathbb{N}}$ and $\tilde{x}=\Gamma_{n}(x)$ for a given $n \in \mathbb{N}$, then $\tilde{X}=X^{n}$). Moreover, appealing to Lemma \ref{lemma_xcont}, while having in mind (\ref{lemma_fundamentalestimateproof2}), yields 
\begin{align*}
||\X_{x}(t)||_{V} \leq \lim \limits_{n \rightarrow \infty}  ||x-\Gamma_{n}(x)||_{V}+\sum \limits_{m=1}\limits^{N(t)}||\eta_{m}-\Gamma_{n}(\eta_{m})||_{V}+C\kappa^{-\rho}(t-\alpha_{N(t)})^{-\rho} =C\kappa^{-\rho}(t-\alpha_{N(t)})^{-\rho},
\end{align*}
for all $t>0$, with probability one. Consequently, (\ref{lemma_fundamentalestimateeq1}) is proven and it remains to verify (\ref{lemma_fundamentalestimateeq2}).\\
In addition, to the existing notations, let $\tilde{y}\in \mathcal{M}(\Omega;V)$, assume $\tilde{y}\in W$ almost surely and introduce $(\tilde{Y}(t))_{t\geq 0}$ and $(\tilde{y}_{m})_{m \in \mathbb{N}_{0}}$ as the process and the sequence generated by  $((\beta_{m})_{m \in \mathbb{N}},(\tilde{\eta}_{m})_{m \in \mathbb{N}},\tilde{y},T)$ in $V$, respectively.\\
Of course, we then also have $\tilde{Y}(t),\tilde{y}_{m}\in W$ for all $t \geq 0$ and $m \in \mathbb{N}_{0}$, with probability one. Now let us verify inductively that
\begin{align}
\label{lemma_fundamentalestimateeq3}
||\tilde{x}_{m}-\tilde{y}_{m}||_{W}\leq \left(\kappa \alpha_{m}+||\tilde{x}-\tilde{y}||_{W}^{-\frac{1}{\rho}}\right)^{-\rho},~\text{a.s. }\forall m \in \mathbb{N}.
\end{align}
If $m=0$, (\ref{lemma_fundamentalestimateeq3}) is even an equality. And if it holds for an $m \in \mathbb{N}_{0}$ we get by applying Assumption \ref{assumption}.\ref{assumption_enumi3}) and then the induction hypothesis that
\begin{align*}
||\tilde{x}_{m+1}-\tilde{y}_{m+1}||_{W} \leq \left(\kappa \beta_{m+1}+||\tilde{x}_{m}-\tilde{y}_{m}||_{W}^{-\frac{1}{\rho}}\right)^{-\rho} \leq \left(\kappa \alpha_{m+1}+||\tilde{x}-\tilde{y}||_{W}^{-\frac{1}{\rho}}\right)^{-\rho},
\end{align*}
with probability one, which proves (\ref{lemma_fundamentalestimateeq3}). Using this, while employing the services of \ref{assumption}.\ref{assumption_enumi3}) once more gives
\begin{eqnarray*}
||\tilde{X}(t)-\tilde{Y}(t)||_{V}
&\leq& ~ C||T(t-\alpha_{N(t)})\tilde{x}_{N(t)}-T(t-\alpha_{N(t)})\tilde{y}_{N(t)}||_{W} \\
&\leq& ~ C\left(\kappa (t-\alpha_{N(t)})+||\tilde{x}_{N(t)}-\tilde{y}_{N(t)}||_{W}^{-\frac{1}{\rho}}\right)^{-\rho} \\
&\leq& ~ C\left(\kappa (t-\alpha_{N(t)})+\kappa \alpha_{N(t)}+||\tilde{x}-\tilde{y}||_{W}^{-\frac{1}{\rho}}\right)^{-\rho} \\
&\leq& ~ C\left(\kappa t\right)^{-\rho},
\end{eqnarray*}
for all $t>0$ with probability one. Now, for every $n \in \mathbb{N}$, let $(Y^{n}(t))_{t \geq 0}$ be the process generated by $((\beta_{m})_{m \in \mathbb{N}},\left(\Gamma_{n}(\eta_{m})\right)_{m \in \mathbb{N}},\Gamma_{n}(y),T)$ in $V$. Then, as $\Gamma_{n}(V)\subseteq W$, the preceding calculation yields $||X^{n}(t)-Y^{n}(t)||_{V}\leq  C\left(\kappa t\right)^{-\rho}$ for all $t>0$ and $n \in \mathbb{N}$ almost surely. Finally, Lemma \ref{lemma_xcont} enables us to conclude that
\begin{align*}
||\X_{x}(t)-\X_{y}(t)||_{V}\leq C\left(\kappa t\right)^{-\rho}+||x-\Gamma_{n}(x)||_{V}+||y-\Gamma_{n}(y)||_{V}+2\sum \limits_{m=1}\limits^{N(t)}||\eta_{m}-\Gamma_{n}(\eta_{m})||_{V},
\end{align*}
for all $t>0$ and $n \in \mathbb{N}$ with probability one, which yields the claim by recalling (\ref{lemma_fundamentalestimateproof2}) and letting $n$ to infinity. 
\end{proof}

\begin{proposition}\label{prop_uniqueinvpropmeas} The transition function $P$ possesses a unique invariant probability measure, i.e. there is one, and only one, probability measure $\mu: \B(V)\rightarrow [0,1]$, such that
\begin{align}
\label{prop_uniqueinvpropmeaseq}
\int \limits_{V} P(t,v,B) \mu (dv)=\mu(B),~\forall t \geq 0,~B \in \B(V).
\end{align}
\end{proposition}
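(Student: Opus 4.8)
The strategy is to invoke the abstract existence-and-uniqueness criterion of \cite{Szarek}, which guarantees a unique invariant probability measure for a Markov transition function on a Polish state space provided that (a) the transition semigroup has the e-property, and (b) some ``concentration'' or ``tightness'' condition holds — typically that there is a point $v_{0}\in V$ such that the time-averaged measures $\frac{1}{t}\int_{0}^{t}P(s,v_{0},\cdot)\,ds$ are tight (equivalently, that the orbit does not escape to infinity). We already have the e-property from Lemma \ref{lemma_basicprop}.\ref{lemma_basicpropmpenumi2}), and $V$ is a separable Banach space, hence Polish. So the whole proof reduces to verifying the tightness/non-escaping condition, and this is exactly what Lemma \ref{lemma_fundamentalestimate} is designed to deliver.

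First I would fix the natural candidate base point $v_{0}=0$ and use the bound \eqref{lemma_fundamentalestimateeq1}, namely $\|\X_{0}(t)\|_{V}\le C\kappa^{-\rho}(t-\alpha_{N(t)})^{-\rho}$ a.s. The quantity $t-\alpha_{N(t)}$ is the \emph{age} of the Poisson process at time $t$; since $(N(t))_{t\ge0}$ is homogeneous with intensity $\theta$, this age is, for fixed large $t$, essentially exponentially distributed with parameter $\theta$ (exactly so in the stationary regime, and one can bound it by such for all $t$ up to the event $\{t<\alpha_{1}\}$ of probability $e^{-\theta t}$). Hence $\E\|\X_{0}(t)\|_{V}^{q}\le C^{q}\kappa^{-q\rho}\,\E (t-\alpha_{N(t)})^{-q\rho}$, and $\E(t-\alpha_{N(t)})^{-q\rho}$ stays bounded in $t$ as long as $q\rho<1$; in particular for $q$ small enough this gives a uniform-in-$t$ moment bound $\sup_{t\ge1}\E\|\X_{0}(t)\|_{V}^{q}=:K<\infty$. (One must handle $t$ near $0$ separately, but there $\X_{0}$ is bounded deterministically by continuity of $T(\cdot)0=0$, or one simply works with the time-average over $[1,t]$, which is harmless for the asymptotic criterion.) From $\sup_{t}\E\|\X_{0}(t)\|_{V}^{q}\le K$ one gets, via Markov's inequality and the integral $\frac1t\int_0^t$, that $\frac1t\int_0^t P(s,0,\{\|v\|_V>R\})\,ds\le K R^{-q}$ uniformly in $t$; since closed balls in $V$ need not be compact this bounds ``mass at infinity'' but not yet tightness — however, the criterion in \cite{Szarek} is precisely stated in terms of this kind of boundedness in probability (a ``lower-bound''/``non-degeneracy at a point'' condition) rather than genuine tightness, and combined with the e-property it yields uniqueness \emph{and} existence. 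If the precise hypothesis of \cite{Szarek} does demand tightness of $\frac1t\int_0^t P(s,0,\cdot)ds$, then I would instead use \eqref{lemma_fundamentalestimateeq2}: for any two initials the laws $P(t,x,\cdot)$ and $P(t,y,\cdot)$ get $C(\kappa t)^{-\rho}$-close in the Wasserstein-type sense, which together with the e-property is exactly the hypothesis (e-property + ``asymptotic coupling''/``$\mathbf{e}$-process with a point attracting in the Cesàro sense'') under which \cite{Szarek} concludes uniqueness of the invariant measure; existence then follows from a Krylov–Bogolyubov argument once the moment bound above furnishes a tight family of averaged measures on a suitable weighted subspace.

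So the steps in order are: (i) record that $(V,\|\cdot\|_V)$ is Polish and that $(Q(t))_{t\ge0}$ is a Feller transition semigroup with the e-property (Lemma \ref{lemma_basicprop}); (ii) choose $v_0=0$, apply Lemma \ref{lemma_fundamentalestimate} and the age-distribution of the Poisson process to obtain $\sup_{t\ge1}\E\|\X_0(t)\|_V^{q}<\infty$ for some $q\in(0,1/\rho)$; (iii) deduce uniform boundedness in probability of $\frac1t\int_1^t P(s,0,\cdot)\,ds$, and, if needed for tightness, upgrade via \eqref{lemma_fundamentalestimateeq2} and the continuity/compactness afforded by Assumption \ref{assumption} on the finer space or a Krylov–Bogolyubov argument; (iv) quote the relevant theorem of \cite{Szarek} to conclude that there is one and only one $\mu$ with \eqref{prop_uniqueinvpropmeaseq}. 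The main obstacle I anticipate is step (iii): converting the pathwise decay estimate into exactly the form of ``boundedness/tightness condition'' that the cited theorem requires, since the bound \eqref{lemma_fundamentalestimateeq1} only controls moments of order $<1/\rho$ (so for $\rho$ close to its admissible range one has very little integrability), and because balls in an infinite-dimensional $V$ are not compact one may genuinely need to exploit Assumption \ref{assumption} — the embedding $W\hookrightarrow V$ and invariance of $W$ — to relocate the dynamics to a space where the relevant sets are compact, or else verify that \cite{Szarek}'s criterion is of the weaker ``e-property + one non-escaping point'' type that sidesteps compactness entirely.
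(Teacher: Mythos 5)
Your proposal assembles the right ingredients (Polish state space, Feller property and e-property from Lemma \ref{lemma_basicprop}, the decay estimate of Lemma \ref{lemma_fundamentalestimate} at the base point $v_{0}=0$, and the citation of \cite{Szarek}), but it never verifies the hypothesis that \cite[Theorem 1]{Szarek} actually requires, and the estimates you do produce are strictly weaker than that hypothesis. The condition to check is a \emph{small-ball lower bound at one point}: there is a $z\in V$ (here $z=0$) such that for \emph{every} $\varepsilon>0$ and every starting point $v$,
\begin{align*}
\liminf_{t\to\infty}\frac{1}{t}\int_{0}^{t}\P\bigl(\|\X_{v}(\tau)\|_{V}<\varepsilon\bigr)\,d\tau>0 .
\end{align*}
Your moment computation ($\sup_{t\ge 1}\E\|\X_{0}(t)\|_{V}^{q}<\infty$ for $q\rho<1$, then Markov's inequality) only shows that the Ces\`aro-averaged mass outside a \emph{large} ball of radius $R$ is at most $KR^{-q}$; that is boundedness in probability, which controls mass at infinity but gives no lower bound on the averaged mass in an arbitrarily small ball around $0$, and (as you yourself note) it does not give tightness either, since balls in $V$ are not compact. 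Your fallback route fares no better: a Krylov--Bogolyubov existence argument needs tightness, and Assumption \ref{assumption} only provides a \emph{continuous} (not compact) embedding $W\hookrightarrow V$, so there is no compactness to exploit; the ``asymptotic coupling'' reading of (\ref{lemma_fundamentalestimateeq2}) could plausibly give uniqueness, but not existence.

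The missing step is short and is exactly where (\ref{lemma_fundamentalestimateeq1}) should be used pathwise rather than in the mean: since $\|\X_{v}(\tau)\|_{V}\le C\kappa^{-\rho}(\tau-\alpha_{N(\tau)})^{-\rho}$ almost surely \emph{for every} initial $v$ (with constants independent of $v$), the event $\{\|\X_{v}(\tau)\|_{V}<\varepsilon\}$ contains the event that the Poisson age exceeds $q:=\kappa^{-1}\varepsilon^{-1/\rho}C^{1/\rho}$, and $\P(\tau-\alpha_{N(\tau)}>q)=e^{-\theta q}$ for $\tau>q$. Hence the Ces\`aro average above is at least $e^{-\theta q}>0$ for every $\varepsilon>0$ and every $v$, which is precisely the hypothesis of \cite[Theorem 1]{Szarek}; together with the e-property this yields existence and uniqueness in one stroke, with no tightness, no moment bounds, and no Krylov--Bogolyubov argument needed.
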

\begin{proof} Appealing to Theorem \ref{theorem_mp} as well as Lemma \ref{lemma_basicprop}.\ref{lemma_basicpropmpenumi1})-\ref{lemma_basicpropmpenumi3}) yields, by virtue of \cite[Theorem 1]{Szarek}, the existence of a unique invariant probability measure, if we can prove that
\begin{align*}
\liminf \limits_{t \rightarrow \infty} \frac{1}{t} \int \limits_{0}\limits^{t} P(||\X_{v}(t)||_{V}< \varepsilon)d\tau >0,~\forall \varepsilon>0,~v \in V.
\end{align*}
So fix $\varepsilon>0$ as well as $v \in V$ and recall the well-known fact that $\P(\tau-\alpha_{N(\tau)}>q)=\exp(-\theta q)\id_{[0,\tau)}(q)$ for all $\tau,q\in [0,\infty)$. Now, introduce $q:=\kappa^{-1}\varepsilon^{-\frac{1}{\rho}}C^{\frac{1}{\rho}}$.\\
Then we get by Lemma \ref{lemma_fundamentalestimate} that
\begin{eqnarray*}
	\liminf \limits_{t \rightarrow \infty} \frac{1}{t} \int \limits_{0}\limits^{t} P(||\X_{v}(\tau)||_{V}< \varepsilon)d\tau
	& \geq & ~ \liminf \limits_{t \rightarrow \infty} \frac{1}{t} \int \limits_{0}\limits^{t} P(C \kappa^{-\rho}(\tau-\alpha_{N(\tau)})^{-\rho}< \varepsilon)d\tau \\
	& = & ~ \liminf \limits_{t \rightarrow \infty} \frac{1}{t} \int \limits_{0}\limits^{t} P(\tau-\alpha_{N(\tau)}> q)d\tau\\
    & = & ~ \exp(-\theta q),
\end{eqnarray*}
which is obviously strictly positive.
\end{proof}

\begin{remark} In the remainder of this section, $\bar{\mu}:\B(V)\rightarrow [0,1]$, denotes the uniquely determined probability measure fulfilling (\ref{prop_uniqueinvpropmeaseq}). Moreover, we call an $\bar{x} \in \mathcal{M}(\Omega;V)$ which is an independent initial with $\P(\bar{x} \in B)=\bar{\mu}(B)$, for all $B \in \B(V)$, an independent, stationary initial.\\
As $\bar{\mu}$ is unique, it is ergodic, see \cite[Theorem 3.2.6]{Prato} for a proof and \cite[Theorem 3.2.4]{Prato} for a couple of useful equivalent definitions of ergodicity, commonly used in the literature.\\ Furthermore, if $\bar{x} \in \mathcal{M}(\Omega;V)$ is an independent, stationary initial, then the Markov process $(\X_{\bar{x}}(t))_{t \geq 0}$ is strictly stationary, see \cite[Lemma 8.11]{Kallenberg}. Moreover, $(\X_{\bar{x}}(t))_{t \geq 0}$ is also ergodic (in the sense that the shift invariant $\sigma$-algebra is $\P$-trivial), which one easily deduces from \cite[Prop. 2.2]{ergodicequi} by appealing to \cite[Theorem 3.2.4.ii)]{Prato}.\\
Finally, $L^{2}(\bar{\mu}):=L^{2}(V,\B(V),\bar{\mu})$ and for any $\psi \in L^{2}(\bar{\mu})$ we set $\overline{(\psi)}:= \int \limits_{V}\psi(v)\bar{\mu}(dv)$ and introduce $L^{2}_{0}(\bar{\mu}):=\{\psi \in L^{2}(\bar{\mu}):\overline{(\psi)}=0\}$.
\end{remark}

\begin{lemma}\label{lemma_boundL2} Let $x \in \mathcal{M}(\Omega;V)$ be an independent initial. Then $||\X_{x}(t)||_{V} \in L^{2}(\Omega)$ for all $t \in (0,\infty)$. In particular, the following assertions hold.
\begin{enumerate}[i)]
	\item\label{lemma_boundL2_enumi1} $\psi(\X_{\bar{x}}(t)) \in L^{2}(\Omega)$, for all $t \in [0,\infty)$, $\psi \in Lip(V)$ and independent stationary initials $\bar{x}\in \mathcal{M}(\Omega;V)$.
	\item\label{lemma_boundL2_enumi2} $Lip(V) \subseteq L^{2}(\bar{\mu})$.
\end{enumerate} 
\end{lemma}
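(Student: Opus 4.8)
The plan is to reduce all three assertions to a single uniform second-moment bound: for each fixed $t>0$ there is a finite constant $K_{t}$ such that $\E||\X_{x}(t)||_{V}^{2}\leq K_{t}$ for \emph{every} independent initial $x$. Granting this, the first claim ($||\X_{x}(t)||_{V}\in L^{2}(\Omega)$ for $t>0$) is immediate; assertion \ref{lemma_boundL2_enumi1}) for $t>0$ follows from $|\psi(\X_{\bar{x}}(t))|\leq|\psi(0)|+L_{\psi}||\X_{\bar{x}}(t)||_{V}$ since $\bar{x}$ is an independent initial; assertion \ref{lemma_boundL2_enumi2}) follows by integrating this pointwise bound against $\bar{\mu}$ via the invariance identity (\ref{prop_uniqueinvpropmeaseq}); and, once \ref{lemma_boundL2_enumi2}) is known, \ref{lemma_boundL2_enumi1}) at $t=0$ follows because $\X_{\bar{x}}(0)=\bar{x}$ has law $\bar{\mu}$.

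To establish the uniform bound I would, for fixed $t>0$, split $\X_{x}(t)=\big(\X_{x}(t)-\X_{0}(t)\big)+\X_{0}(t)$, where $\X_{0}$ denotes the process started at the constant independent initial $0\in V$. Lemma \ref{lemma_fundamentalestimate}, namely (\ref{lemma_fundamentalestimateeq2}) applied with $y=0$, gives the \emph{deterministic} bound $||\X_{x}(t)-\X_{0}(t)||_{V}\leq C\kappa^{-\rho}t^{-\rho}$ almost surely -- crucially this does not involve $||x||_{V}$, which matters since a general independent initial need not be square-integrable. For the term $\X_{0}(t)$, Remark \ref{remark_xrepresentation} gives $\X_{0}(t)=T(t-\alpha_{N(t)})\x_{0,N(t)}$ a.s., so contractivity together with Assumption \ref{assumption}.\ref{assumption_enumi4}) yields $||\X_{0}(t)||_{V}\leq||\x_{0,N(t)}||_{V}$, and an easy induction on the recursion $\x_{0,0}=0$, $\x_{0,m}=T(\beta_{m})\x_{0,m-1}+\eta_{m}$ (contractivity and $T(\cdot)0=0$ again) gives $||\x_{0,m}||_{V}\leq\sum_{k=1}^{m}||\eta_{k}||_{V}$, hence $||\X_{0}(t)||_{V}\leq S_{t}:=\sum_{k=1}^{N(t)}||\eta_{k}||_{V}$ almost surely. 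The remaining point is $S_{t}\in L^{2}(\Omega)$: conditioning on $N(t)$ and using that $(||\eta_{k}||_{V})_{k}$ is i.i.d.\ with $||\eta_{1}||_{V}\in L^{2}(\Omega)$ by (\ref{eq_etal2int}) and independent of $N(t)$ (which depends only on $(\beta_{m})_{m}$), Cauchy--Schwarz gives $\E\big[(\sum_{k=1}^{n}||\eta_{k}||_{V})^{2}\big]\leq n^{2}\,\E||\eta_{1}||_{V}^{2}$, so $\E[S_{t}^{2}]\leq\E||\eta_{1}||_{V}^{2}\cdot\E[N(t)^{2}]<\infty$ because $N(t)$ is Poisson distributed and hence has finite second moment. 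Combining, $\E||\X_{x}(t)||_{V}^{2}\leq 2C^{2}\kappa^{-2\rho}t^{-2\rho}+2\E[S_{t}^{2}]=:K_{t}<\infty$, uniformly in $x$.

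With the uniform bound in hand, \ref{lemma_boundL2_enumi1}) for $t>0$ is as described above. For \ref{lemma_boundL2_enumi2}), fix any $t>0$ (e.g.\ $t=1$) and apply the invariance identity (\ref{prop_uniqueinvpropmeaseq}) to the bounded measurable truncations $\min(\psi^{2},n)$, then let $n\to\infty$ by monotone convergence; since $P(t,v,\cdot)=\P_{\X_{v}(t)}$ (see (\ref{eq_pdef})), this yields $\int_{V}\psi(w)^{2}\bar{\mu}(dw)=\int_{V}\E\big[\psi(\X_{v}(t))^{2}\big]\bar{\mu}(dv)\leq 2\psi(0)^{2}+2L_{\psi}^{2}K_{t}<\infty$ for $\psi\in\text{Lip}(V)$, i.e.\ $\psi\in L^{2}(\bar{\mu})$. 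Finally, for \ref{lemma_boundL2_enumi1}) at $t=0$, an independent stationary initial $\bar{x}$ satisfies $\X_{\bar{x}}(0)=\bar{x}$ with $\P_{\bar{x}}=\bar{\mu}$, whence $\E[\psi(\bar{x})^{2}]=\int_{V}\psi^{2}\,d\bar{\mu}<\infty$ by \ref{lemma_boundL2_enumi2}).

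I do not expect any step to be a serious obstacle; the one spot that needs attention is precisely the one flagged above -- arranging the estimates so that the (possibly non-integrable) initial $x$ never enters, which is exactly what the splitting $\X_{x}=(\X_{x}-\X_{0})+\X_{0}$ and the deterministic bound (\ref{lemma_fundamentalestimateeq2}) accomplish. The random-sum estimate for $S_{t}$ and the truncation/invariance argument for \ref{lemma_boundL2_enumi2}) are then routine.
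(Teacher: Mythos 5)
Your proof is correct and uses the paper's central estimate essentially verbatim: the split $\X_{x}(t)=\big(\X_{x}(t)-\X_{0}(t)\big)+\X_{0}(t)$ with the deterministic bound (\ref{lemma_fundamentalestimateeq2}) from Lemma \ref{lemma_fundamentalestimate} and the compound-sum bound $||\X_{0}(t)||_{V}\leq\sum_{k=1}^{N(t)}||\eta_{k}||_{V}$, the only technical variants being that the paper obtains the latter from Lemma \ref{lemma_xcont} rather than by your direct induction and controls the random sum via the Blackwell--Girshick equation, while your Cauchy--Schwarz plus Poisson-second-moment argument is an equally valid elementary substitute. The one genuine divergence is the finish: the paper gets \ref{lemma_boundL2_enumi1}) for all $t$, including $t=0$, from strict stationarity of $(\X_{\bar{x}}(t))_{t\geq0}$ and then reads off \ref{lemma_boundL2_enumi2}) from $||\psi||_{L^{2}(\bar{\mu})}^{2}=\mathbb{E}\big(\psi(\bar{x})^{2}\big)$, whereas you prove \ref{lemma_boundL2_enumi2}) first via the invariance identity with truncation/monotone convergence (using that your moment bound is uniform over deterministic initials $v\in V$) and deduce the $t=0$ case of \ref{lemma_boundL2_enumi1}) from it --- both routes are sound, the paper's being slightly shorter.
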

\begin{proof} Let $t>0$ and $x \in \mathcal{M}(\Omega;V)$ be an independent initial. Then we get by employing the services of Lemma \ref{lemma_xcont} and Lemma \ref{lemma_fundamentalestimate} that
\begin{align*}
||\X_{x}(t)||_{V} \leq ||\X_{x}(t)-\X_{0}(t)||_{V}+||\X_{0}(t)||_{V} \leq C \kappa^{-\rho}t^{-\rho}+\sum \limits_{m=1}\limits^{N(t)}||\eta_{m}||_{V}
\end{align*}
almost surely. Consequently, $||\X_{x}(t)||_{V} \in L^{2}(\Omega)$ holds, if $\sum \limits_{m=1}\limits^{N(t)}||\eta_{m}||_{V} \in L^{2}(\Omega)$. But the latter is true by the Blackwell-Girshick equation, which is applicable since $(||\eta_{k}||)_{k \in \mathbb{N}}\subseteq L^{2}(\Omega)$ is i.i.d. and independent of $(N(t))_{t \geq 0}$, which is (as it is a Poisson process) in particular square integrable.\\
Now, note that, due to stationary, \ref{lemma_boundL2}.\ref{lemma_boundL2_enumi1}) holds for one $t \in [0,\infty)$ if and only if, it holds for every $t \in [0,\infty)$. So assume $t>0$, then we get $|\psi(\X_{\bar{x}}(t))|\leq L_{\psi}||\X_{\bar{x}}(t)||_{V}+|\psi(0)|$, which is already known to be square integrable. Finally, \ref{lemma_boundL2}.\ref{lemma_boundL2_enumi2}) follows from \ref{lemma_boundL2}.\ref{lemma_boundL2_enumi1}), since $||\psi||_{L^{2}(\bar{\mu})}^{2}=\mathbb{E}\left(\psi(\bar{x})^{2}\right)$.
\end{proof}

\begin{theorem}\label{theorem_slln} Let $\psi \in Lip(V)$ and $x \in \mathcal{M}(\Omega;V)$ be an independent initial. Then the convergence
\begin{align}
\label{theorem_sllneq}
\lim \limits_{t \rightarrow \infty } \frac{1}{t} \int \limits_{0} \limits^{t} \psi (\X_{x}(\tau)) d\tau =\overline{(\psi)},
\end{align}
takes place with probability one.
\end{theorem}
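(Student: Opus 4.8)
The plan is to prove (\ref{theorem_sllneq}) first for an independent, stationary initial and then to transfer it to an arbitrary independent initial $x$ by means of the contractivity estimates in Lemmas \ref{lemma_xcont} and \ref{lemma_fundamentalestimate}.

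\emph{Step 1 (stationary case).} Fix an independent, stationary initial $\bar{x}$; if $(\Omega,\F,\P)$ does not support such an $\bar{x}$, enlarge it by an independent factor carrying $\bar{\mu}$ as a law, which is harmless since the conclusion concerns only $\X_{x}$ — a function of the original coordinates — so an almost sure statement on the enlargement descends to $(\Omega,\F,\P)$. By the remark following Proposition \ref{prop_uniqueinvpropmeas}, the process $(\X_{\bar{x}}(t))_{t\geq 0}$ is strictly stationary and ergodic (its shift-invariant $\sigma$-algebra is $\P$-trivial); by Lemma \ref{lemma_basicprop}.\ref{lemma_basicpropmpenumi3}) and \ref{lemma_basicpropmpenumi6}) it has c\`{a}dl\`{a}g, progressively measurable paths, so $(t,\omega)\mapsto\psi(\X_{\bar{x}}(t,\omega))$ is jointly measurable; and since $\X_{\bar{x}}(0)=\bar{x}$, Lemma \ref{lemma_boundL2}.\ref{lemma_boundL2_enumi1}) gives $\psi(\bar{x})\in L^{2}(\Omega)\subseteq L^{1}(\Omega)$ with $\mathbb{E}\psi(\bar{x})=\overline{(\psi)}$, so by stationarity $\tau\mapsto\psi(\X_{\bar{x}}(\tau))$ is almost surely locally integrable. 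The continuous-time Birkhoff ergodic theorem for ergodic stationary processes then yields
\begin{align*}
\lim_{t\to\infty}\frac{1}{t}\int_{0}^{t}\psi(\X_{\bar{x}}(\tau))\,d\tau=\mathbb{E}\psi(\bar{x})=\overline{(\psi)}\qquad\text{almost surely.}
\end{align*}

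\emph{Step 2 (transfer).} Let $x$ be an arbitrary independent initial; since $\X_{x}$ and $\X_{\bar{x}}$ are generated by the same sequences $(\beta_{m})_{m\in\mathbb{N}}$, $(\eta_{m})_{m\in\mathbb{N}}$, applying Lemma \ref{lemma_xcont} with $\hat{\eta}_{m}=\eta_{m}$ together with Lemma \ref{lemma_fundamentalestimate} gives almost surely
\begin{align*}
||\X_{x}(\tau)-\X_{\bar{x}}(\tau)||_{V}\leq\min\big(\,||x-\bar{x}||_{V},\,C\kappa^{-\rho}\tau^{-\rho}\,\big),\qquad\forall\,\tau>0.
\end{align*}
Since the pathwise integrals are well defined (the paths being c\`{a}dl\`{a}g) and $\psi$ is $L_{\psi}$-Lipschitz, this entails
\begin{align*}
\left|\frac{1}{t}\int_{0}^{t}\psi(\X_{x}(\tau))\,d\tau-\frac{1}{t}\int_{0}^{t}\psi(\X_{\bar{x}}(\tau))\,d\tau\right|\leq\frac{L_{\psi}}{t}\int_{0}^{t}\min\big(\,R,\,C\kappa^{-\rho}\tau^{-\rho}\,\big)\,d\tau,
\end{align*}
where $R:=||x-\bar{x}||_{V}<\infty$ almost surely. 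For $\P$-a.e. $\omega$, splitting the last integral at the crossover point $\tau_{0}:=(C\kappa^{-\rho}/R)^{1/\rho}$ and treating the cases $\rho<1$, $\rho=1$ and $\rho>1$ separately shows that its time average tends to $0$ as $t\to\infty$. Combining this with Step 1 proves (\ref{theorem_sllneq}).

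The main obstacle is the behaviour near $\tau=0$: the fundamental estimate (\ref{lemma_fundamentalestimateeq2}) degenerates there and, when $\rho\geq 1$, is not even integrable on $(0,1)$, so the time average in Step 2 cannot be controlled by (\ref{lemma_fundamentalestimateeq2}) alone — the complementary, globally bounded comparison $||x-\bar{x}||_{V}$ furnished by Lemma \ref{lemma_xcont} is genuinely needed. Everything else is routine: passing to a probability space carrying an independent, stationary initial, and checking that the hypotheses of the continuous-time ergodic theorem (strict stationarity, ergodicity, joint measurability, integrability) hold, are all handled by results already established in Sections \ref{sec_mp} and \ref{sec_sllnclt}.
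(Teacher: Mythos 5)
Your proposal is correct and takes essentially the same route as the paper: first the continuous-time ergodic theorem for the stationary process $(\X_{\bar{x}}(t))_{t\geq 0}$, then a transfer to an arbitrary independent initial via the decay bound (\ref{lemma_fundamentalestimateeq2}) of Lemma \ref{lemma_fundamentalestimate}. The only deviations are cosmetic: you control the degeneracy near $\tau=0$ by the additional comparison $||x-\bar{x}||_{V}$ from Lemma \ref{lemma_xcont}, where the paper simply drops the (a.s. bounded) contribution of $[0,1]$ and averages $\tau^{-\rho}$ over $[1,t]$, and you make explicit the (harmless) enlargement of the probability space needed to carry a stationary independent initial.
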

\begin{proof} Firstly, note that the left hand side integral exists, since Lemma \ref{lemma_meas} and Lemma \ref{lemma_basicprop} yield that $[0,t]\ni \tau \mapsto \psi(\X_{x}(\tau,\omega))$ is $\B([0,t])$-$\B(\R)$-measurable and for $\P$-a.e. $\omega \in \Omega$ bounded, respectively.\\
Now let $\bar{x} \in \mathcal{M}(\Omega;V)$ be an independent stationary initial. Then appealing to \cite[Theorem 3.3.1]{Prato} yields
\begin{align}
\label{theorem_sllnproofeq1}
\lim \limits_{t \rightarrow \infty } \frac{1}{t} \int \limits_{0} \limits^{t} \psi (\X_{\bar{x}}(\tau)) d\tau = \overline{(\psi)},
\end{align}
almost surely, for all $\psi \in Lip(V)$. (This theorem is indeed applicable, since $\bar{\mu}$ is ergodic, $(\X_{\bar{x}}(t))_{t \geq 0}$ is stationary, stochastically continuous and since $Lip(V)\subseteq L^{2}(\bar{\mu})$.)\\
Conclusively, recalling Lemma \ref{lemma_fundamentalestimate} gives
\begin{align*}
\lim \limits_{t \rightarrow \infty } \left|\frac{1}{t} \int \limits_{0} \limits^{t} \psi (\X_{\bar{x}}(\tau))- \psi (\X_{x}(\tau)) d\tau\right| \leq L_{\psi}C\kappa^{-\rho}\lim \limits_{t \rightarrow \infty } \frac{1}{t} \int \limits_{1} \limits^{t} \tau^{-\rho} d\tau=0,
\end{align*}
almost surely, which yields combined with (\ref{theorem_sllnproofeq1}) the claim.
\end{proof}

The task ahead of us that remains is proving the CLT, which will be achieved by the results in \cite{Holzmann}. Applying the results in \cite{Holzmann} requires to extend the family of mappings $(Q(t))_{t \geq 0}$ to a linear, time-continuous, contractive semigroup on $L^{2}(\bar{\mu})$. To aid the reader who is not too familiar with Markov processes, let us outline why this is possible.

\begin{remark}\label{remark_extension} Let $t \in [0,\infty)$ be given. Then for any $\hat{V} \in \B(V)$, with $\bar{\mu}(\hat{V})=1$, we get by the invariance of $\bar{\mu}$ that there is a set $\tilde{V} \in \B(V)$, with $\mu(\tilde{V})=1$, such that $\P(\X_{v}(t)\in \hat{V})=1,~\forall v \in \tilde{V}$.\\
Moreover, if $\psi = \id_{B}$, where $B \in \B(V)$, then the invariance of $\bar{\mu}$ gives
\begin{align}
\label{remark_extensioneq1}
\int \limits_{V} \mathbb{E} \psi(\X_{v}(t))\bar{\mu}(dv)= \int \limits_{V} \psi(v)\bar{\mu}(dv).
\end{align}
Moreover, by linearity in $\psi$, (\ref{remark_extensioneq1}) also holds for all step functions. Now let $\psi \in L^{2}(\bar{\mu})$ be arbitrary, then there are step functions $(\psi_{m})_{m \in \mathbb{N}}$ with $\lim \limits_{m \rightarrow \infty}\psi_{m}=\psi$ in $L^{2}(\bar{\mu})$ and $\bar{\mu}$-a.e. Hence, for $\mu$-a.e. $v \in V$ we get $\lim \limits_{m \rightarrow \infty} \psi_{m}(\X_{v}(t))= \psi(\X_{v}(t))$ a.s. Consequently, applying Fatou's Lemma (twice) and (\ref{remark_extensioneq1}) yields
\begin{align*}
\int \limits_{V} \mathbb{E}\left( \psi(\X_{v}(t))^{2}\right)\bar{\mu}(dv) \leq \liminf \limits_{m \rightarrow \infty} \int \limits_{V} \mathbb{E}\left( \psi_{m}(\X_{v}(t))^{2}\right)\bar{\mu}(dv) = \liminf \limits_{m \rightarrow \infty} \int \limits_{V} \psi_{m}(v)^{2}\bar{\mu}(dv) =||\psi||_{L^{2}(\bar{\mu})}^{2}<\infty.
\end{align*} 
Hence, for $\bar{\mu}$-almost every $v \in V$, $\mathbb{E}\psi(\X_{v}(t))$ exists and we infer from Jensen's inequality that
\begin{align}
\label{remark_extensioneq2}
\int \limits_{V} \big(\mathbb{E} \psi(\X_{v}(t))\big)^{2}\bar{\mu}(dv) \leq ||\psi||_{L^{2}(\bar{\mu})}^{2},~\forall \psi \in L^{2}(\bar{\mu}).
\end{align}
Consequently, we can extend the domain of each $Q(t)$ to $L^{2}(\bar{\mu})$, i.e. from now on $Q(t):L^{2}(\bar{\mu})\rightarrow L^{2}(\bar{\mu})$, with $(Q(t)\psi)(v):= \mathbb{E} \psi(\X_{v}(t))$, for all $t \in [0,\infty)$, $v \in V$ and $\psi \in L^{2}(\bar{\mu})$.\\
Using this and Theorem \ref{theorem_mp}.\ref{theorem_mpenumi4}) yields that $(Q(t))_{t \geq 0}$ is a linear, contractive semigroup on $L^{2}(\bar{\mu})$, see \cite[Theorem 1, p. 381]{Yosida} for a detailed proof.
\end{remark}

It seems to be mathematical common knowledge that this semigroup is (due to stochastic continuity and contractivity) time-continuous. But, the present author was unable to find any proof of this assertion, therefore let's do that:

\begin{lemma}\label{lemma_extension} The family of mappings $(Q(t))_{t \geq 0}$ is a linear, time-continuous contractive semigroup on $L^{2}(\bar{\mu})$.
\end{lemma}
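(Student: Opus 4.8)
The plan is to establish the three defining properties of a linear, time-continuous, contractive semigroup on $L^{2}(\bar{\mu})$. Linearity of each $Q(t)$ is clear from the defining formula $(Q(t)\psi)(v)=\mathbb{E}\psi(\X_{v}(t))$ and linearity of the expectation, and the semigroup law as well as contractivity on $L^{2}(\bar{\mu})$ have already been recorded in Remark \ref{remark_extension} (via \cite[Theorem 1, p. 381]{Yosida} and the Chapman-Kolmogorov property from Theorem \ref{theorem_mp}.\ref{theorem_mpenumi4}), together with the key inequality (\ref{remark_extensioneq2})). So the only thing that actually requires work is time-continuity, i.e. that $\lim_{h \to 0}\|Q(t+h)\psi-Q(t)\psi\|_{L^{2}(\bar{\mu})}=0$ for every $\psi \in L^{2}(\bar{\mu})$ and $t \geq 0$.

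Because $(Q(t))_{t \geq 0}$ is a contraction semigroup on the Banach space $L^{2}(\bar{\mu})$, a standard reduction shows it suffices to prove strong continuity at $t=0$, i.e. $\lim_{h \downarrow 0}\|Q(h)\psi-\psi\|_{L^{2}(\bar{\mu})}=0$ for all $\psi \in L^{2}(\bar{\mu})$: indeed for $h>0$ one has $\|Q(t+h)\psi-Q(t)\psi\|_{L^{2}(\bar{\mu})}=\|Q(t)(Q(h)\psi-\psi)\|_{L^{2}(\bar{\mu})}\leq \|Q(h)\psi-\psi\|_{L^{2}(\bar{\mu})}$ by contractivity, and for the left limit at $t>0$ one writes $\|Q(t)\psi-Q(t-h)\psi\|_{L^{2}(\bar{\mu})}\leq \|Q(h)\psi-\psi\|_{L^{2}(\bar{\mu})}$ similarly. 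A second standard reduction: since $\text{Lip}_{b}(V)$ (or even $C_{b}(V)$) is dense in $L^{2}(\bar{\mu})$ and the operators $Q(h)$ are uniformly (in $h$) bounded by $1$, an $\varepsilon/3$-argument reduces strong continuity at $0$ to the case $\psi \in C_{b}(V)$.

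For $\psi \in C_{b}(V)$ the claim follows from Lemma \ref{lemma_basicprop}: part \ref{lemma_basicpropmpenumi3}) gives that $(\X_{v}(t))_{t \geq 0}$ is continuous in probability for every $v \in V$ (here $v$ is an independent initial, being constant), hence $\X_{v}(h)\to \X_{v}(0)=v$ in probability as $h \downarrow 0$, and since $\psi$ is bounded and continuous, bounded convergence gives $(Q(h)\psi)(v)=\mathbb{E}\psi(\X_{v}(h)) \to \psi(v)$ for every $v \in V$ — this is also exactly the pointwise continuity recorded in Lemma \ref{lemma_basicprop}.\ref{lemma_basicpropmpenumi4}). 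Moreover $|(Q(h)\psi)(v)-\psi(v)|^{2}\leq (2\|\psi\|_{\infty})^{2}$, which is $\bar{\mu}$-integrable since $\bar{\mu}$ is a probability measure, so the dominated convergence theorem yields $\|Q(h)\psi-\psi\|_{L^{2}(\bar{\mu})}^{2}=\int_{V}|(Q(h)\psi)(v)-\psi(v)|^{2}\,\bar{\mu}(dv)\to 0$ as $h \downarrow 0$.

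Unwinding the two reductions then gives strong continuity at $0$ on all of $L^{2}(\bar{\mu})$, and the contraction argument promotes this to time-continuity at every $t \geq 0$, completing the proof. I do not anticipate a genuine obstacle here: the only mild subtlety is being careful that the pointwise limit $(Q(h)\psi)(v)\to\psi(v)$ is available for $\psi \in C_{b}(V)$ (which is precisely Lemma \ref{lemma_basicprop}.\ref{lemma_basicpropmpenumi4}), itself a consequence of continuity in probability), and that the density/uniform-boundedness $\varepsilon/3$-argument and the contraction-semigroup reduction from $t=0$ to general $t$ are both entirely routine Banach-space manipulations.
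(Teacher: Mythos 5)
Your proposal is correct and follows essentially the same route as the paper: density of $C_{b}(V)$ in $L^{2}(\bar{\mu})$, contractivity of $Q$ to absorb the approximation error, and stochastic continuity of $(\X_{v}(t))_{t\geq 0}$ (Lemma \ref{lemma_basicprop}) combined with dominated convergence to settle the bounded continuous case. The only cosmetic difference is that you first reduce to strong continuity at $t=0$ via the semigroup law, whereas the paper argues directly at an arbitrary $t$ using the pointwise convergence $(Q(t+h_{m})\varphi)(v)\to(Q(t)\varphi)(v)$; both steps rest on exactly the same ingredients.
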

\begin{proof} In light of Remark \ref{remark_extension}, it remains to prove the time continuity. So let $(h_{m})_{m \in \mathbb{N}}$ be a null-sequence, let $t \in [0,\infty)$ and assume w.l.o.g. that $t+h_{m}\geq 0$ for all $m \in \mathbb{N}$. Now let $\psi \in L^{2}(\bar{\mu})$, choose $\varepsilon>0$ and $\varphi \in C_{b}(V)$ such that $||\psi-\varphi||_{L^{2}(\bar{\mu})}<\frac{\varepsilon}{2}$. Then, by stochastic continuity of $(\X_{v}(t))_{t\geq 0}$, and passing to a subsequence if necessary, we have $\lim \limits_{m \rightarrow \infty}\varphi(\X_{v}(t+h_{m}))=\varphi(\X_{v}(t))$ almost surely. Thus, the boundedness of $\varphi$ yields (by dominated convergence) that  $\lim \limits_{m \rightarrow \infty}(Q(t+h_{m})\varphi)(v)=(Q(t)\varphi)(v)$ for all $v \in V$. Consequently, employing Lebesgue's theorem once more gives  $\lim \limits_{m \rightarrow \infty}Q(t+h_{m})\varphi=Q(t)\varphi$ in $L^{2}(\bar{\mu})$. Using this, as well as the contractivity of $Q$ gives
\begin{align*}
\lim \limits_{m \rightarrow \infty} ||Q(t+h_{m})\psi-Q(t)\psi||_{L^{2}(\bar{\mu})}\leq 2||\psi-\varphi||_{L^{2}(\bar{\mu})}+\lim \limits_{m \rightarrow \infty} ||Q(t+h_{m})\varphi-Q(t)\varphi||_{L^{2}(\bar{\mu})}\leq \varepsilon,
\end{align*}
which yields the desired time continuity, as $\varepsilon>0$ was arbitrary.
\end{proof}

\begin{lemma}\label{lemma_cltprepbpund} Let $\psi \in Lip(V)$ and set $\psi_{c}:=\psi-\overline{(\psi)}$. Then $\psi_{c} \in L^{2}_{0}(\bar{\mu})$ and
\begin{align*}
||Q(t)\psi_{c}||_{L^{2}(\bar{\mu})}\leq L_{\psi}C \kappa^{-\rho}t^{-\rho},
\end{align*}
for all $t>0$.
\end{lemma}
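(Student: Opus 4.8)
The plan is to dispatch the membership claim and then extract the decay rate by coupling the process started at a point with the stationary one via the contraction estimate (\ref{lemma_fundamentalestimateeq2}). First, since $\psi \in Lip(V)$, Lemma \ref{lemma_boundL2}.\ref{lemma_boundL2_enumi2}) gives $\psi \in L^{2}(\bar{\mu})$, and the constant $\overline{(\psi)}$ lies in $L^{2}(\bar{\mu})$ because $\bar{\mu}$ is a probability measure; hence $\psi_{c} = \psi-\overline{(\psi)} \in L^{2}(\bar{\mu})$ and $\overline{(\psi_{c})}=\overline{(\psi)}-\overline{(\psi)}=0$, i.e. $\psi_{c} \in L^{2}_{0}(\bar{\mu})$, so the extended semigroup of Remark \ref{remark_extension} and Lemma \ref{lemma_extension} applies to $\psi_{c}$. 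Moreover, for a constant (hence independent) initial $v \in V$ and any $t>0$ one has $|\psi(\X_{v}(t))| \leq L_{\psi}||\X_{v}(t)||_{V} + |\psi(0)|$, which is integrable by Lemma \ref{lemma_boundL2}; thus $(Q(t)\psi_{c})(v) = \mathbb{E}\psi(\X_{v}(t)) - \overline{(\psi)}$ holds for every $v \in V$ and is consistent with the $L^{2}(\bar{\mu})$-extension.

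Next, fix $t>0$ and pick an independent, stationary initial $\bar{x} \in \mathcal{M}(\Omega;V)$. Since $(\X_{\bar{x}}(s))_{s\geq 0}$ is strictly stationary, $\X_{\bar{x}}(t)$ has law $\bar{\mu}$, and by Lemma \ref{lemma_boundL2}.\ref{lemma_boundL2_enumi1}) $\psi(\X_{\bar{x}}(t))$ is integrable with $\mathbb{E}\psi(\X_{\bar{x}}(t)) = \overline{(\psi)}$. For a fixed $v \in V$, the constant initial $v$ and the initial $\bar{x}$ are both independent initials driven by the same $(\beta_{m})_{m \in \mathbb{N}}$ and $(\eta_{m})_{m \in \mathbb{N}}$, so Lemma \ref{lemma_fundamentalestimate}, specifically (\ref{lemma_fundamentalestimateeq2}), gives $||\X_{v}(t)-\X_{\bar{x}}(t)||_{V} \leq C\kappa^{-\rho}t^{-\rho}$ almost surely. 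Using the Lipschitz continuity of $\psi$,
\begin{align*}
|(Q(t)\psi_{c})(v)| = \big|\mathbb{E}\psi(\X_{v}(t)) - \mathbb{E}\psi(\X_{\bar{x}}(t))\big| \leq L_{\psi}\, \mathbb{E}||\X_{v}(t)-\X_{\bar{x}}(t)||_{V} \leq L_{\psi} C \kappa^{-\rho} t^{-\rho}
\end{align*}
for every $v \in V$ (the same $\bar{x}$ serves for all $v$, since it does not depend on $v$). Squaring and integrating against the probability measure $\bar{\mu}$ then gives $||Q(t)\psi_{c}||_{L^{2}(\bar{\mu})}^{2} \leq (L_{\psi} C\kappa^{-\rho}t^{-\rho})^{2}$, which is the assertion.

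I do not anticipate a genuine obstacle; the only points requiring care are that the extended operator $Q(t)$ agrees with $v \mapsto \mathbb{E}\psi_{c}(\X_{v}(t))$ on $Lip(V)$ (which follows from the integrability noted above together with the construction in Remark \ref{remark_extension}) and the measurability of $v \mapsto (Q(t)\psi_{c})(v)$ (automatic from that extension). If one prefers not to invoke an independent, stationary initial living on $(\Omega,\F,\P)$, one can instead write $\overline{(\psi)} = \int_{V} \mathbb{E}\psi(\X_{w}(t))\,\bar{\mu}(dw)$ by invariance of $\bar{\mu}$ --- the interchange being legitimate since $w \mapsto ||w||_{V}$ belongs to $L^{2}(\bar{\mu})$ --- and bound $|\mathbb{E}\psi(\X_{v}(t)) - \mathbb{E}\psi(\X_{w}(t))| \leq L_{\psi} C\kappa^{-\rho}t^{-\rho}$ uniformly in $w$ by (\ref{lemma_fundamentalestimateeq2}) before integrating in $w$.
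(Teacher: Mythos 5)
Your proposal is correct and follows essentially the same route as the paper's proof: establish $\psi_{c}\in L^{2}_{0}(\bar{\mu})$ via Lemma \ref{lemma_boundL2}, write $\overline{(\psi)}=\mathbb{E}\psi(\X_{\bar{x}}(t))$ by stationarity, and bound $\big|\mathbb{E}[\psi(\X_{v}(t))-\psi(\X_{\bar{x}}(t))]\big|$ by $L_{\psi}C\kappa^{-\rho}t^{-\rho}$ using the coupling estimate (\ref{lemma_fundamentalestimateeq2}) before integrating against $\bar{\mu}$. The extra care you take about the $L^{2}(\bar{\mu})$-extension of $Q(t)$ and the alternative phrasing via invariance are fine but not needed beyond what the paper does.
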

\begin{proof} Clearly, $\psi_{c} \in Lip(V)$, thus $\psi_{c} \in L^{2}(\bar{\mu})$ by Lemma \ref{lemma_boundL2}.\ref{lemma_boundL2_enumi2}). Moreover, $\psi_{c}$ is obviously centered. In addition, by stationary we get $\overline{(\psi)}= \mathbb{E}\psi(\X_{\bar{x}}(t))$, where $\bar{x}\in \mathcal{M}(\Omega;V)$ is an independent, stationary initial. Using this and invoking Lemma \ref{lemma_fundamentalestimate} yields
\begin{align*}
||Q(t)\psi_{c}||_{L^{2}(\bar{\mu})}^{2} = \int \limits_{V} \big(\mathbb{E}[\psi(\X_{v}(t))-\psi(\X_{\bar{x}}(t))]\big)^{2}\bar{\mu}(dv) \leq (L_{\psi}C\kappa^{-\rho}t^{-\rho})^{2}
\end{align*}
and the claim follows.
\end{proof}

\begin{theorem}\label{theorem_clt} Assume $\rho > \frac{1}{2}$, let $\psi \in Lip(V)$ and $x \in \mathcal{M}(\Omega;V)$ be an independent initial. Then there is a $\sigma^{2}(\psi) \in [0,\infty)$ such that
\begin{align}
\label{theorem_clteq1}
\lim \limits_{t \rightarrow \infty }\frac{1}{\sqrt{t}}\left( \int \limits_{0}\limits^{t}\psi(\X_{x}(\tau))d\tau-t\overline{(\psi)}\right)=Y\sim N(0,\sigma^{2}(\psi)) ,
\end{align}
in distribution. Moreover, we have
\begin{align}
\label{theorem_clteq2}
\sigma^{2}(\psi) :=  \lim \limits_{t \rightarrow \infty}\frac{1}{t}\mathbb{E}\left( \int \limits_{0}\limits^{t}\psi_{c}(\X_{\bar{x}}(\tau))d\tau\right)^{2}=\lim \limits_{t \rightarrow \infty}\frac{1}{t}\Var\left( \int \limits_{0}\limits^{t}\psi(\X_{\bar{x}}(\tau))d\tau\right),
\end{align}
where $\bar{x}\in \mathcal{M}(\Omega;V)$ is an arbitrary stationary, independent initial and $\psi_{c}:=\psi-\overline{(\psi)}$.
\end{theorem}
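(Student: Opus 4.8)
The plan is to reduce (\ref{theorem_clteq1}) to a central limit theorem for the centered functional $\psi_{c}:=\psi-\overline{(\psi)}$ along the \emph{stationary} version of the process, to invoke the martingale-approximation CLT of \cite{Holzmann}, and then to remove the stationarity assumption on the initial datum by a coupling (Slutsky) argument based on the fundamental estimate of Lemma \ref{lemma_fundamentalestimate}. The reduction is immediate: since $\int_{0}^{t}\psi(\X_{x}(\tau))\,d\tau-t\overline{(\psi)}=\int_{0}^{t}\psi_{c}(\X_{x}(\tau))\,d\tau$ (the integral being well defined and a.s.\ finite by Lemma \ref{lemma_meas} and Lemma \ref{lemma_basicprop}, exactly as in the proof of Theorem \ref{theorem_slln}), it suffices to prove that $t^{-1/2}\int_{0}^{t}\psi_{c}(\X_{x}(\tau))\,d\tau\to N(0,\sigma^{2}(\psi))$ in distribution.

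Fix an independent, stationary initial $\bar{x}\in\mathcal{M}(\Omega;V)$. By the remarks following Proposition \ref{prop_uniqueinvpropmeas} the process $(\X_{\bar{x}}(t))_{t\geq 0}$ is strictly stationary, ergodic, stochastically continuous and has c\`{a}dl\`{a}g paths, and by Lemma \ref{lemma_extension} the operators $(Q(t))_{t\geq 0}$ form a linear, time-continuous, contractive semigroup on $L^{2}(\bar{\mu})$. By Lemma \ref{lemma_cltprepbpund} we have $\psi_{c}\in L^{2}_{0}(\bar{\mu})$ and $\|Q(t)\psi_{c}\|_{L^{2}(\bar{\mu})}\leq L_{\psi}C\kappa^{-\rho}t^{-\rho}$ for all $t>0$; together with the contraction bound $\|Q(t)\psi_{c}\|_{L^{2}(\bar{\mu})}\leq\|\psi_{c}\|_{L^{2}(\bar{\mu})}$ on $(0,1]$ and the hypothesis $\rho>\tfrac{1}{2}$ this gives $\int_{0}^{\infty}\|Q(t)\psi_{c}\|_{L^{2}(\bar{\mu})}^{2}\,dt<\infty$, which is the Kipnis--Varadhan-type integrability condition under which \cite{Holzmann} supplies a constant $\sigma^{2}(\psi)\in[0,\infty)$ with $\sigma^{2}(\psi)=\lim_{t\to\infty}\tfrac{1}{t}\E\big(\int_{0}^{t}\psi_{c}(\X_{\bar{x}}(\tau))\,d\tau\big)^{2}$ and
\begin{align*}
\frac{1}{\sqrt{t}}\int_{0}^{t}\psi_{c}(\X_{\bar{x}}(\tau))\,d\tau\longrightarrow N(0,\sigma^{2}(\psi))\quad\text{in distribution.}
\end{align*}
Since, by Fubini (justified by Lemma \ref{lemma_boundL2}) and stationarity, $\E\int_{0}^{t}\psi(\X_{\bar{x}}(\tau))\,d\tau=t\overline{(\psi)}$, the second moment above coincides with $\Var\big(\int_{0}^{t}\psi(\X_{\bar{x}}(\tau))\,d\tau\big)$, which yields (\ref{theorem_clteq2}).

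To pass to an arbitrary independent initial $x$, note that $\psi_{c}$ is Lipschitz with constant $L_{\psi}$, so (\ref{lemma_fundamentalestimateeq2}) gives, almost surely,
\begin{align*}
\frac{1}{\sqrt{t}}\left|\int_{0}^{t}\psi_{c}(\X_{x}(\tau))\,d\tau-\int_{0}^{t}\psi_{c}(\X_{\bar{x}}(\tau))\,d\tau\right|\leq\frac{L_{\psi}}{\sqrt{t}}\int_{0}^{t}\|\X_{x}(\tau)-\X_{\bar{x}}(\tau)\|_{V}\,d\tau\leq\frac{L_{\psi}C\kappa^{-\rho}}{\sqrt{t}}\left(1+\int_{1}^{t}\tau^{-\rho}\,d\tau\right).
\end{align*}
The right-hand side is $O(t^{1/2-\rho})$ for $\rho\in(\tfrac{1}{2},1)$, $O(t^{-1/2}\log t)$ for $\rho=1$ and $O(t^{-1/2})$ for $\rho>1$, hence tends to $0$ in every case precisely because $\rho>\tfrac{1}{2}$; this is where the threshold $\rho=\tfrac{1}{2}$ enters. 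By Slutsky's theorem the limiting law is therefore unchanged, so $t^{-1/2}\int_{0}^{t}\psi_{c}(\X_{x}(\tau))\,d\tau\to N(0,\sigma^{2}(\psi))$ in distribution, which is (\ref{theorem_clteq1}).

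The main obstacle I anticipate is lining up the precise hypotheses of \cite{Holzmann}: one must confirm that its integrability condition is indeed (equivalent to) $\int_{0}^{\infty}\|Q(t)\psi_{c}\|_{L^{2}(\bar{\mu})}^{2}\,dt<\infty$ --- and not the stronger $\int_{0}^{\infty}\|Q(t)\psi_{c}\|_{L^{2}(\bar{\mu})}\,dt<\infty$, which would force $\rho>1$ --- and that its conclusion is stated for the one-dimensional additive functional under the stationary initial law and for a process with the regularity (c\`{a}dl\`{a}g, stochastically continuous, ergodic) established in Section \ref{sec_mp}. Once that bookkeeping is done, the remaining steps --- the reduction to $\psi_{c}$, the variance identity, and the Slutsky transfer --- are routine, the only quantitative inputs being the sharp estimates of Lemma \ref{lemma_cltprepbpund} and Lemma \ref{lemma_fundamentalestimate}.
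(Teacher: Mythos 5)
Your proposal is correct and follows essentially the same route as the paper: reduce to the centered function $\psi_{c}$, apply the martingale-approximation CLT of \cite{Holzmann} to the stationary process $(\X_{\bar{x}}(t))_{t\geq 0}$ using the decay bound of Lemma \ref{lemma_cltprepbpund}, and transfer to an arbitrary independent initial via Lemma \ref{lemma_fundamentalestimate} and Slutsky. The only point of divergence is the form of the integrability hypothesis: the paper verifies the condition $\int_{1}^{\infty}t^{-1/2}\lVert Q(t)\psi_{c}\rVert_{L^{2}(\bar{\mu})}\,dt<\infty$ (as required by \cite[Corollary 3.2 and Theorem 3.1]{Holzmann}) rather than your square-integrability condition, but with the bound $\lVert Q(t)\psi_{c}\rVert_{L^{2}(\bar{\mu})}\leq L_{\psi}C\kappa^{-\rho}t^{-\rho}$ both amount to exactly the threshold $\rho>\tfrac{1}{2}$, so your argument goes through unchanged.
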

\begin{proof} Appealing to Lemma \ref{lemma_cltprepbpund} gives $\psi_{c} \in L^{2}_{0}(\bar{\mu})$ as well as
\begin{align*}
\int \limits_{1}\limits^{\infty} \frac{1}{\sqrt{t}}||Q(t)\psi_{c}||_{L^{2}(\bar{\mu})}dt \leq L_{\psi}C \kappa^{-\rho} \int \limits_{1}\limits^{\infty} t^{-\rho-\frac{1}{2}}dt,
\end{align*}
which is finite, since $\rho>\frac{1}{2}$. Consequently, as we already know that $(\X_{\bar{x}}(t))_{t \geq 0}$ is a stationary, ergodic, $(\F_{t}^{\bar{x}})_{t \geq 0}$-progressive Markov process with time-continuous, contractive semigroup $(Q(t))_{t \geq 0}$, we get by \cite[Corollary 3.2 and Theorem 3.1]{Holzmann} that
\begin{align}
\label{theorem_cltproofeq1}
\lim \limits_{t \rightarrow \infty }\frac{1}{\sqrt{t}} \int \limits_{0}\limits^{t}\psi_{c}(\X_{\bar{x}}(\tau))d\tau=Y\sim N(0,\sigma^{2}(\psi)),
\end{align}
in distribution and that $\sigma^{2}(\psi)$ is indeed given by the first equality in (\ref{theorem_clteq2}). Moreover, the second equality in (\ref{theorem_clteq2}) is trivial, since $\psi_{c}(\X_{\bar{x}}(\tau))=\psi(\X_{\bar{x}}(\tau))-\mathbb{E}(\psi(\X_{\bar{x}}(\tau)))$ by stationarity.\\
Now, note that clearly
\begin{align*}
\frac{1}{\sqrt{t}}\left( \int \limits_{0}\limits^{t}\psi(\X_{x}(\tau))d\tau-t\overline{(\psi)}\right) = \frac{1}{\sqrt{t}}\int \limits_{0}\limits^{t}\psi(\X_{x}(\tau))-\psi(\X_{\bar{x}}(\tau))d\tau+\frac{1}{\sqrt{t}} \int \limits_{0}\limits^{t}\psi_{c}(\X_{\bar{x}}(\tau))d\tau,~\forall t>0
\end{align*}
which yields, in light of (\ref{theorem_cltproofeq1}), that (\ref{theorem_clteq1}) holds, if the first summand in the previous express converges almost surely to zero. But recalling that $\rho>\frac{1}{2}$ and invoking Lemma \ref{lemma_fundamentalestimate} yields
\begin{align*}
\lim \limits_{t \rightarrow \infty } \left|\frac{1}{\sqrt{t}} \int \limits_{0} \limits^{t} \psi (\X_{x}(\tau))- \psi (\X_{\bar{x}}(\tau)) d\tau\right| \leq L_{\psi}C\kappa^{-\rho}\lim \limits_{t \rightarrow \infty } \frac{1}{\sqrt{t}} \int \limits_{1} \limits^{t} \tau^{-\rho} d\tau=0,
\end{align*}
with probability one.
\end{proof}

Now this section concludes by summarizing Theorem \ref{theorem_slln} and Theorem \ref{theorem_clt} for the probably most prominent Lipschitz continuous map from $V$ to $\R$, namely $||\cdot||_{V}$.

\begin{corollary}\label{corollary_clt} Let $x \in \mathcal{M}(\Omega;V)$ be an independent initial and $\bar{x}\in \mathcal{M}(\Omega;V)$ a stationary independent initial. Then the following assertions hold.
\begin{enumerate}[i)]
	\item $\lim \limits_{t \rightarrow \infty } \frac{1}{t} \int \limits_{0} \limits^{t} ||\X_{x}(\tau)||_{V} d\tau =\nu$ with probability one, where $\nu:=\int \limits_{V} ||v||_{V} \bar{\mu}(dv)=\mathbb{E}||\bar{x}||_{V}$.
	\item\label{corollary_clt_enumi2} If $\rho>\frac{1}{2}$, then $\lim \limits_{t \rightarrow \infty }\frac{1}{\sqrt{t}}\left( \int \limits_{0}\limits^{t}||\X_{x}(\tau)||_{V}d\tau-t\nu\right)=Y\sim N(0,\sigma^{2})$ in distribution, where $\sigma^{2}\in [0,\infty)$, with $\sigma^{2} =\lim \limits_{t \rightarrow \infty}\frac{1}{t}\Var\left( \int \limits_{0}\limits^{t}||\X_{\bar{x}}(\tau)||_{V}d\tau\right)$.
\end{enumerate}
\end{corollary}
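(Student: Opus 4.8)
The plan is to apply Theorem \ref{theorem_slln} and Theorem \ref{theorem_clt} to the single choice $\psi := ||\cdot||_{V}$, i.e.\ $\psi:V\rightarrow\R$, $v\mapsto ||v||_{V}$. First I would record that this $\psi$ is Lipschitz continuous with $L_{\psi}=1$: the reverse triangle inequality gives $\big|\,||v_{1}||_{V}-||v_{2}||_{V}\,\big|\leq ||v_{1}-v_{2}||_{V}$ for all $v_{1},v_{2}\in V$, so $\psi\in Lip(V)$. By Lemma \ref{lemma_boundL2}.\ref{lemma_boundL2_enumi2}) we then have $\psi\in L^{2}(\bar{\mu})\subseteq L^{1}(\bar{\mu})$, so that $\nu:=\overline{(\psi)}=\int_{V}||v||_{V}\,\bar{\mu}(dv)$ is a well-defined finite number. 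Since $\bar{x}$ is an independent stationary initial, its law is $\bar{\mu}$, whence the image-measure (change of variables) formula yields $\mathbb{E}||\bar{x}||_{V}=\int_{V}||v||_{V}\,\bar{\mu}(dv)=\nu$; the integrability $||\bar{x}||_{V}\in L^{1}(\Omega)$ needed to invoke that formula again follows from Lemma \ref{lemma_boundL2}.

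For assertion i) I would simply invoke Theorem \ref{theorem_slln} with this $\psi$ and the given independent initial $x$: it gives $\lim_{t\rightarrow\infty}\frac{1}{t}\int_{0}^{t}||\X_{x}(\tau)||_{V}\,d\tau=\overline{(\psi)}=\nu$ almost surely, which is exactly the claim. For assertion ii) I would assume $\rho>\frac{1}{2}$ and apply Theorem \ref{theorem_clt} with the same $\psi$ and $x$: this produces a constant $\sigma^{2}(\psi)\in[0,\infty)$ such that $\frac{1}{\sqrt{t}}\big(\int_{0}^{t}||\X_{x}(\tau)||_{V}\,d\tau-t\nu\big)$ converges in distribution to $Y\sim N(0,\sigma^{2}(\psi))$; setting $\sigma^{2}:=\sigma^{2}(\psi)$ gives the stated limit law, and the representation $\sigma^{2}=\lim_{t\rightarrow\infty}\frac{1}{t}\Var\big(\int_{0}^{t}||\X_{\bar{x}}(\tau)||_{V}\,d\tau\big)$ is read off directly from the second equality in (\ref{theorem_clteq2}).

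There is essentially no obstacle here, since every ingredient has already been established; the corollary is just the specialization of Theorems \ref{theorem_slln} and \ref{theorem_clt} to the canonical Lipschitz map. The only mildly non-trivial points are the (immediate) verification that $||\cdot||_{V}$ is Lipschitz with constant one and the identification $\overline{(||\cdot||_{V})}=\mathbb{E}||\bar{x}||_{V}$ via the transformation formula, both of which are settled before the two theorems are quoted verbatim.
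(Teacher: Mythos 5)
Your proposal is correct and coincides with the paper's intent: the corollary is stated there precisely as the specialization of Theorem \ref{theorem_slln} and Theorem \ref{theorem_clt} to the $1$-Lipschitz map $\psi=||\cdot||_{V}$, with $\nu=\overline{(\psi)}=\mathbb{E}||\bar{x}||_{V}$ identified via the law of $\bar{x}$ being $\bar{\mu}$, exactly as you argue. The paper offers no further proof, so your write-up fills in the same (routine) details the author left implicit.
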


\section{Examples and a useful Criteria}
\label{sec_examples}

The first result of this section is the introductory mentioned differential-inequality-result, which is probably not only in our examples useful to verify Assumption \ref{assumption}.\ref{assumption_enumi3}). Even though this result seems to be in common use, we were unable to find it anywhere in the literature, stated precisely as we need it and with a rigorous proof. Therefore, the simple proof will be given. Once this is achieved we proceed with our ODE example and devote the remainder of this section to the $p$-Laplacian example.

\begin{lemma}\label{lemma_diffinequality} Let $f:[0,\infty)\rightarrow [0,\infty)$ be locally Lipschitz continuous on $[0,\infty)$. Moreover, assume that there are constants $\kappa,\tilde{\rho} \in (0,\infty)$ such that
\begin{align}
\label{lemma_diffinequalityeq1}
f^{\prime}(t)\leq - \kappa \tilde{\rho} f(t)^{1+\frac{1}{\tilde{\rho}}},
\end{align}
for a.e. $t \in (0,\infty)$. Then we have
\begin{align}
\label{lemma_diffinequalityeq2}
f(t)\leq \left(\kappa t+f(0)^{-\frac{1}{\tilde{\rho}}}\right)^{-\tilde{\rho}},
\end{align}
for all $t \in [0,\infty)$.
\end{lemma}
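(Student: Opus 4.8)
The plan is to linearise (\ref{lemma_diffinequalityeq1}) through the substitution $h:=f^{-1/\tilde{\rho}}$, integrate the resulting lower bound on $h'$, and invert. The main work is the bookkeeping around the points where $f$ may vanish.

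First I would record that (\ref{lemma_diffinequalityeq1}) forces $f'\le 0$ a.e., so $f$ is non-increasing. The case $f(0)=0$ is then trivial: $0\le f(t)\le f(0)=0$ for all $t$, and $(\kappa t+f(0)^{-1/\tilde{\rho}})^{-\tilde{\rho}}$ is to be read, as in Section \ref{sec_sllnclt}, as the continuous extension $0$ of $y\mapsto(\kappa t+y^{-1/\tilde{\rho}})^{-\tilde{\rho}}$ at $y=0$. So assume $f(0)>0$ and put $t_{0}:=\inf\{t\ge 0:f(t)=0\}\in(0,\infty]$ (with $\inf\emptyset:=\infty$); by continuity and monotonicity, $f>0$ on $[0,t_{0})$ and $f\equiv 0$ on $[t_{0},\infty)$.

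Next, fix $s\in(0,t_{0})$. On $[0,s]$ the map $f$ is Lipschitz and, by monotonicity, $f(s)\le f(t)\le f(0)$ for $t\in[0,s]$, so $f$ stays in the compact interval $[f(s),f(0)]\subseteq(0,\infty)$ there. Since $x\mapsto x^{-1/\tilde{\rho}}$ is $C^{1}$ (hence locally Lipschitz) on $(0,\infty)$, the composition $h:=f^{-1/\tilde{\rho}}$ is Lipschitz, therefore absolutely continuous, on $[0,s]$. At a.e.\ point of $[0,s]$ the chain rule gives $h'(t)=-\tfrac{1}{\tilde{\rho}}f(t)^{-1/\tilde{\rho}-1}f'(t)$, and since the prefactor $-\tfrac{1}{\tilde{\rho}}f(t)^{-1/\tilde{\rho}-1}$ is negative, inserting (\ref{lemma_diffinequalityeq1}) yields $h'(t)\ge\kappa$ for a.e.\ $t\in[0,s]$. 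As $h$ is absolutely continuous, integrating over $[0,t]$ for $t\in[0,s]$ gives $h(t)\ge h(0)+\kappa t=f(0)^{-1/\tilde{\rho}}+\kappa t$, whence $f(t)=h(t)^{-\tilde{\rho}}\le(\kappa t+f(0)^{-1/\tilde{\rho}})^{-\tilde{\rho}}$. Letting $s\uparrow t_{0}$, and invoking continuity of $f$ at $t_{0}$ when $t_{0}<\infty$, this bound holds for all $t\in[0,t_{0}]$.

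Finally, for $t>t_{0}$ one has $f(t)=0$, while the right-hand side of (\ref{lemma_diffinequalityeq2}) equals the strictly positive real number $(\kappa t+f(0)^{-1/\tilde{\rho}})^{-\tilde{\rho}}$ (the base $\kappa t+f(0)^{-1/\tilde{\rho}}$ lying in $(0,\infty)$), so the inequality holds there as well, completing the proof. The only genuinely delicate step is the claim that $h=f^{-1/\tilde{\rho}}$ is absolutely continuous on compact subintervals of $[0,t_{0})$ — so that the fundamental theorem of calculus and the chain rule are legitimate — and this is exactly what the ``bounded away from zero'' observation secures; everything else is routine.
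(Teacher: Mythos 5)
Your proposal is correct and follows essentially the same route as the paper: the paper also sets $I:=\inf\{t\geq 0: f(t)=0\}$, works with $F:=f^{-\frac{1}{\tilde{\rho}}}$ on compact subintervals of $[0,I)$ where $f$ is bounded away from zero (so that $F$ is Lipschitz and the fundamental theorem of calculus applies), integrates $F^{\prime}\geq \kappa$, and treats the set where $f$ vanishes separately by monotonicity and continuity. The only cosmetic difference is that you split off the case $f(0)=0$ via the zero-extension convention, whereas the paper phrases it as the case $I=0$.
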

\begin{proof} Firstly, as $f$ is a real-valued, locally Lipschitz continuous function it is indeed differentiable almost everywhere. Moreover, (\ref{lemma_diffinequalityeq1}) yields that $f$ is monotonically decreasing.\\ 
Now set $I:=\inf\{t\geq0:~ f(t)=0\}$. If $I=0$, then $f(t)=0$ for all $t>0$ and by continuity for all $t \geq 0$. Consequently, in this case (\ref{lemma_diffinequalityeq2}) trivially holds. So assume $I>0$ and let $\tilde{I} \in [0,I)$ be arbitrary but fixed and introduce $F:[0,\tilde{I}]\rightarrow [0,\infty)$ with $F(t):=f(t)^{-\frac{1}{\tilde{\rho}}}$. As $f(t)\geq f(\tilde{I})>0$ for all $t \in [0,\tilde{I}]$, $F$ is, as is the composition of Lipschitz continuous functions, itself Lipschitz continuous. Consequently, we get
\begin{align*}
F(t)-F(0)= \int \limits_{0}\limits^{t} F^{\prime}(\tau)d\tau =-\frac{1}{\tilde{\rho}} \int \limits_{0}\limits^{t} f(\tau)^{-\frac{1}{\tilde{\rho}}-1}f^{\prime}(\tau)d\tau\geq \kappa t,~\forall t \in [0,\tilde{I}].
\end{align*}
Thus (\ref{lemma_diffinequalityeq2}) holds on $[0,\tilde{I}]$ and as $\tilde{I}$ was arbitrary, it holds on $t \in [0,I)$. Finally, if $I=\infty$ the proof is complete and if $I<\infty$, the infimum is (by continuity) a minimum and by monotonicity $f=0$ on $[I,\infty)$, in which case (\ref{lemma_diffinequalityeq2}) is trivial.
\end{proof}

\begin{example}\label{remarkex1} Let $\rho \in (0,\infty)$ and introduce the family of mappings $(T(t))_{t \geq 0}$, where $T(t):\R\rightarrow\R$, by $T(t)v:=sgn(v)\left(t+|v|^{-\frac{1}{\rho}}\right)^{-\rho}$ for all $v \in \mathbb{R}$. Then, obviously $T(0)v=sgn(v)|v|=v$ and a direct calculation shows that $T(\cdot)v$ fulfills the ODE 
\begin{align*}
y^{\prime}(t)=-\rho y(t)|y(t)|^{\frac{1}{\rho}},~\forall t \in (0,\infty) \text{ and } y(0)=v.
\end{align*}
Moreover, we have	
\begin{align*}
T(t)(T(h)v)=sgn(T(h)v)\left(t+|T(h)v|^{-\frac{1}{\rho}}\right)^{-\rho}=sgn(v)\left(t+h+|v|^{-\frac{1}{\rho}}\right)^{-\rho}=T(t+h)v,
\end{align*}
for all $t,h \in [0,\infty)$ and $v \in \R$. Thus, as $t \mapsto T(t)v$ is trivially continuous, $(T(t))_{t \geq 0}$ is a time-continuous semigroup.\\
Now set $\kappa:=2^{-\frac{1}{\rho}}$ and let us verify Assumption \ref{assumption}.\ref{assumption_enumi3}), with $V=W=\R$. Doing this requires to prove
\begin{enumerate}[i)]
	\item\label{remarkex1_enumi1} $T(t)u_{1}+T(t)u_{2}\leq \left(\kappa t+(u_{1}+u_{2})^{-\frac{1}{\rho}}\right)^{-\rho}$, for all $t \in [0,\infty)$, $u_{1},u_{2} \geq 0$ and
	\item\label{remarkex1_enumi2} $T(t)u_{1}-T(t)u_{2}\leq \left(\kappa t+(u_{1}-u_{2})^{-\frac{1}{\rho}}\right)^{-\rho}$, for all $t \in [0,\infty)$, $u_{1},u_{2} \geq 0$ with $u_{1}\geq u_{2}$.
\end{enumerate}
Proof of \ref{remarkex1_enumi1}). Firstly, the convexity of $[0,\infty) \ni x \mapsto x^{1+\frac{1}{\rho}}$ yields $x^{1+\frac{1}{\rho}}+y^{1+\frac{1}{\rho}}\geq 2^{-\frac{1}{\rho}}(x+y)^{1+\frac{1}{\rho}}$ for all $x,y \in [0,\infty)$. Now set $f(t):=T(t)u_{1}+T(t)u_{2}$, for all $t \in [0,\infty)$. Then we get
\begin{align*}
f^{\prime}(t)= -\rho \left((T(t)u_{1})^{1+\frac{1}{\rho}}+(T(t)u_{2})^{1+\frac{1}{\rho}}\right)\leq -\rho \kappa\left(T(t)u_{1}+T(t)u_{2}\right)^{1+\frac{1}{\rho}}=-\rho \kappa f(t)^{1+\frac{1}{\rho}},~\forall t>0.
\end{align*}
Consequently, as $f$ is (particularly locally) Lipschitz continuous, \ref{remarkex1_enumi1}) follows from Lemma \ref{lemma_diffinequality}.\\
Proof of \ref{remarkex1_enumi2}). Firstly, it is easily verified that $x^{1+\frac{1}{\rho}}-y^{1+\frac{1}{\rho}}\geq (x-y)^{1+\frac{1}{\rho}}\geq \kappa (x-y)^{1+\frac{1}{\rho}}$ for all $x \geq y \geq 0$. Moreover, note that $T(t)u_{1}\geq T(t)u_{2}$, since $u_{1}\geq u_{2}\geq 0$. Now, set $f(t):=T(t)u_{1}-T(t)u_{2}$, then we get
\begin{align*}
f^{\prime}(t)=-\rho \left((T(t)u_{1})^{1+\frac{1}{\rho}}-(T(t)u_{2})^{1+\frac{1}{\rho}}\right)\leq -\rho \kappa \left(T(t)u_{1}-T(t)u_{2}\right)^{1+\frac{1}{\rho}}=-\rho \kappa f(t)^{1+\frac{1}{\rho}},~\forall t>0.
\end{align*}
Consequently, employing Lemma \ref{lemma_diffinequality} once more yields \ref{remarkex1_enumi2}).\\
Now, one easily infers from \ref{remarkex1_enumi1}), \ref{remarkex1_enumi2}) and $T(t)(-v)=-T(t)v$, for all $v \in \R$ that
\begin{align*}
|T(t)u-T(t)v|\leq \left(\kappa t+|u-v|^{-\frac{1}{\rho}}\right)^{-\rho},~\forall t \in [0,\infty),~u,v \in \R.
\end{align*}
In particular, $(T(t))_{t \geq 0}$ is contractive and by construction we have $T(t)0=0$. Consequently, $(T(t))_{t \geq 0}$ is a time-continuous, contractive semigroup fulfilling Assumption \ref{assumption} with $V=W=\R$. Now, let $(\eta_{m})_{m \in \mathbb{N}}\subseteq L^{2}(\Omega)$ be an i.i.d. sequence and let $(\beta_{m})_{m \in \mathbb{N}}$ be an i.i.d. sequence of $Exp(\theta)$-distributed random variables, where $\theta \in (0,\infty)$. In addition, assume that both sequences are independent of each other and let, for any independent initial $x \in \mathcal{M}(\Omega;\R)$, $\X_{x}:[0,\infty)\times \Omega \rightarrow \R$ denote the process generated by $((\beta_{m})_{m \in \mathbb{N}},(\eta_{m})_{m \in \mathbb{N}},x,T)$ in $\R$. Then, as the identity is Lipschitz continuous, it follows from Theorem \ref{theorem_slln} and Theorem \ref{theorem_clt} that
\begin{enumerate}[i)]\setcounter{enumi}{2}
	\item\label{remarkex1_enumi3} $\lim \limits_{t \rightarrow \infty } \frac{1}{t} \int \limits_{0} \limits^{t} \X_{x}(\tau) d\tau =\mathbb{E}\bar{x}$ a.s., for any independent initial $x \in \mathcal{M}(\Omega;\R)$ where $\bar{x}\in \mathcal{M}(\Omega;\R)$ is a stationary, independent initial, and
	\item\label{remarkex1_enumi4} if in addition $\rho>\frac{1}{2}$, then we have $\lim \limits_{t \rightarrow \infty }\frac{1}{\sqrt{t}}\left( \int \limits_{0}\limits^{t}\X_{x}(\tau)d\tau-t\mathbb{E}\bar{x}\right)=Y\sim N(0,\sigma^{2})$ in distribution,  for any independent initial $x \in \mathcal{M}(\Omega;\R)$, where $\sigma^{2} =  \lim \limits_{t \rightarrow \infty}\frac{1}{t}\Var\left( \int \limits_{0}\limits^{t}\X_{\bar{x}}(\tau)d\tau\right)$.
\end{enumerate} 
Now let us demonstrate that the assumption $\rho>\frac{1}{2}$ in \ref{remarkex1_enumi4}) cannot be dropped. To this end, assume $\eta_{k}=0$ for all $k \in \mathbb{N}$, then $\X_{x}(t)=T(t)x$ for any independent initial $x \in \mathcal{M}(\Omega;\R)$. Since $T(t)0=0$ for all $t \geq 0$, $\bar{x}=0$ is the (in this case even almost surely unique) stationary, independent initial. Consequently, we have $\mathbb{E}\bar{x}=\Var\left( \int \limits_{0}\limits^{t}\X_{\bar{x}}(\tau)d\tau\right)=0$ and \ref{remarkex1_enumi4}), with $x=1$ and without additional assuming $\rho>\frac{1}{2}$, would imply
\begin{align}
\label{remarkex1_eq1}
\lim \limits_{t \rightarrow \infty }\frac{1}{\sqrt{t}} \int \limits_{0}\limits^{t}\left(\tau+1\right)^{-\rho}d\tau=0,~\forall \rho>0,
\end{align}
which is now, due to the lack of randomness, simply convergence in $\R$. But obviously, (\ref{remarkex1_eq1}) is true if and only if $\rho>\frac{1}{2}$. 
\end{example}

Even though the semigroup considered in the previous example only acted on $\R$ and not an infinite dimensional Banach space, it is worth mentioning that neither \ref{remarkex1}.\ref{remarkex1_enumi3}) nor \ref{remarkex1}.\ref{remarkex1_enumi4}) are trivial.\\ 

Now let us turn to the weighted $p$-Laplacian example, in which case the semigroup acts on an infinite dimensional Banach space.\\
Throughout the remainder of this section, let $n \in \mathbb{N}\setminus \{1\}$  and $\emptyset \neq  S  \subseteq \mathbb{R}^{n}$ be a non-empty, open, connected and bounded sets of class $C^{1}$.
Moreover, let $p \in (2,\infty)$ and set $L^{q}(S,\mathbb{R}^{m}):=L^{q}(S,\B(S),\lambda;\mathbb{R}^{m})$, for any $q \in [1,\infty]$ and $m \in \mathbb{N}$, where $\lambda$ denotes the Lebesgue measure. This is further abbreviated by $L^{q}(S)$, if $m=1$. In addition, introduce $L^{q}_{0}(S):=\{f\in L^{q}(S):~\overline{(f)}=0\}$, where $\overline{(f)}:=\frac{1}{\lambda(S)}\int \limits_{S}fd\lambda$. Clearly, $(L^{q}_{0}(S),||\cdot||_{L^{q}(S)})$ is a separable Banach space.\\ 
Now, let $\gamma: S   \rightarrow (0,\infty)$ be such that $\gamma \in L^{\infty}( S )$, $\gamma^{\frac{2}{2-p}} \in L^{1}( S)$ and assume that there is an $A_{p}$-Muckenhoupt weight (see, \cite[page 4]{ich1}) $\gamma_{0}:\mathbb{R}^{n}\rightarrow \mathbb{R}$ such that $\gamma_{0}|_{ S }=\gamma$ a.e.  on $S$. Moreover, we introduce the weighted Sobolev space $W_{\gamma}^{1,p}( S )$ as
\begin{align*}
W_{\gamma }^{1,p}( S ):=\{f \in L^{p}( S ): f \text{ is weakly diff. and } ~\gamma^{\frac{1}{p}}\nabla f \in L^{p}( S;\mathbb{R}^{n})\}. 
\end{align*} 
In addition, whenever $q \in [1,\infty)$, $W^{1,q}(S)$ denotes the usual Sobolev space and $C_{S,q}$ is the Poincar\'{e} constant of $S$ in $L^{q}_{0}(S)$, i.e. the smallest constant such that $||\varphi||_{L^{q}(S)}\leq C_{S,q}||\nabla \varphi ||_{L^{q}(S)}$ for all \linebreak$\varphi \in W^{1,q}(S)\cap L^{q}_{0}(S)$.\\
Throughout the sequel, $|\cdot|_{n}$ is the Euclidean norm on $\mathbb{R}^{n}$ and for any $x,y\in\R^{n}$, $x\cdot y$ denotes the canonical inner product of these vectors.\\
In the sequel, we frequently apply results from \cite{mazon} and \cite{ich1}. Applying them requires the assumption $\gamma^{\frac{1}{1-p}}\in L^{1}(S)$, which is indeed easily inferred from $\gamma^{\frac{2}{2-p}}\in L^{1}(S)$.\\
The following weighted $p$-Laplace operator is the central object of the remainder of this paper:

\begin{definition}\label{definifition_plaplaceop} Let $A: D(A)\rightarrow 2^{L^{1}(S)}$ be defined by: $(f,\hat{f}) \in A$ if and only if the following assertions hold.
	\begin{enumerate}
		\item $f \in W^{1,p}_{\gamma}( S ) \cap L^{\infty}( S )$. 
		\item $\hat{f} \in L^{1}( S )$.
		\item $\int \limits_{ S}  \gamma|\nabla f|_{n}^{p-2}\nabla f\cdot\nabla \varphi  d \lambda = \int \limits_{ S } \hat{f} \varphi d \lambda$ for all $\varphi\in W^{1,p}_{\gamma }( S )\cap L^{\infty}( S )$.
	\end{enumerate}
Moreover, $\A:D(\A)\rightarrow 2^{L^{1}(S)}$ denotes the closure of $A$, i.e. $(f,\hat{f})\in \A$ if there is a sequence $((f_{m},\hat{f}_{m}))_{m \in \mathbb{N}}\subseteq A$ such that $\lim \limits_{m \rightarrow \infty} (f_{m},\hat{f}_{m})=(f,\hat{f})$, in $L^{1}(S)\times L^{1}(S)$
\end{definition}

One verifies that $A$ is single-valued, see \cite[Lemma 3.1]{ich1}. In addition, if one chooses $\gamma=1$ on $S$, then $A$ is simply the $p$-Laplacian operator with Neumann boundary conditions.\\
Moreover, it is possible to determine the closure explicitly, see \cite[Proposition 3.6]{mazon}. But the explicit description is fairly technical and not needed for our purposes, therefore it will be omitted. What is important to our purposes is that $\A$ is densely defined and m-accretive, see \cite[Section 3]{mazon}\footnote{This is also stated in \cite[Theorem 2.3]{ich1}, which summarizes the highlights of \cite[Section 3]{mazon}.}.
Consequently, recalling Remark \ref{remark_msee}, we can introduce $(T_{\A}(t))_{t \geq 0}$ as the semigroup associated to $\A$, which is, according to the same remark, a time-continuous, contractive semigroup on $L^{1}(S)$. In fact,  $(T_{\A}(t))_{t \geq 0}$ even forms a family of strong solutions, not just of mild ones, see \cite[Section 3]{mazon}.\\
The following three results enable us to apply the result of Sections \ref{sec_mp} and \ref{sec_sllnclt} to the current setting, which is achieved in Theorem \ref{theoremplaplace}. In particular, in Proposition \ref{prop_plaplacebound} it is demonstrated how to use Lemma \ref{lemma_diffinequality} in the current situation.

\begin{lemma}\label{lemma_invsssg} For each $q \in [1,\infty)$, the space $L^{q}_{0}(S)$ is invariant w.r.t. $T_{\A}(t)$. Moreover, the restriction of $T_{\A}$ to $L^{q}_{0}(S)$ is a time-continuous, contractive semigroup on $(L^{q}_{0}(S),||\cdot||_{L^{q}(S)})$ which fulfills $T_{\A}(t)0=0$ for all $t \in [0,\infty)$.
\end{lemma}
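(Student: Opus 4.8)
The plan is to transfer the known $L^{1}(S)$-theory of $(T_{\A}(t))_{t \geq 0}$ to $L^{q}_{0}(S)$ by combining the mass-conservation of the Neumann $p$-Laplace flow with the fact that the (weighted) $p$-Laplacian is a completely accretive operator. For $q=1$ all three assertions are immediate from the $L^{1}(S)$-theory recalled in Remark \ref{remark_msee} together with the mass-conservation established below, so the genuine work concerns $q \in (1,\infty)$.

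First I would note that $T_{\A}(t)0=0$: taking $f=\hat{f}=0$ in Definition \ref{definifition_plaplaceop} shows $(0,0)\in A\subseteq \A$, so $u\equiv 0$ is the unique mild solution of $0\in u^{\prime}(t)+\A u(t)$, $u(0)=0$, whence $T_{\A}(t)0=0$ for all $t\geq 0$. Next, since the constant function $\varphi\equiv 1$ lies in $W^{1,p}_{\gamma}(S)\cap L^{\infty}(S)$, inserting it into the variational identity defining $A$ gives $\int_{S}\hat{f}\,d\lambda=0$ for every $(f,\hat{f})\in A$; passing to the $L^{1}(S)\times L^{1}(S)$-closure and using that $g\mapsto \overline{(g)}$ is $||\cdot||_{L^{1}(S)}$-continuous, the same holds for all $(f,\hat{f})\in \A$, i.e.\ $\A$ has range in $L^{1}_{0}(S)$. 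Consequently, for $\lambda>0$ and $g\in L^{1}_{0}(S)$, writing $f:=(\Id+\lambda\A)^{-1}g$ and $g=f+\lambda\hat{f}$ with $\hat{f}\in \A f$ yields $0=\overline{(g)}=\overline{(f)}+\lambda\overline{(\hat{f})}=\overline{(f)}$, so each resolvent $(\Id+\lambda\A)^{-1}$ leaves the closed subspace $L^{1}_{0}(S)\subseteq L^{1}(S)$ invariant. Since $T_{\A}(t)v=\lim_{n\to\infty}(\Id+\tfrac{t}{n}\A)^{-n}v$ in $L^{1}(S)$ by the Crandall--Liggett exponential formula (see \cite{BenilanBook}), $T_{\A}(t)$ leaves $L^{1}_{0}(S)$ invariant as well.

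To reach $L^{q}_{0}(S)$ for $q\in(1,\infty)$, I would invoke that $\A$ is completely accretive (see \cite[Section 3]{mazon}, resp.\ \cite[Theorem 2.3]{ich1}), so that every resolvent $(\Id+\lambda\A)^{-1}$ is nonexpansive on $L^{r}(S)$ for all $r\in[1,\infty]$; combined with $(\Id+\lambda\A)^{-1}0=0$ this gives, by iteration, $||(\Id+\tfrac{t}{n}\A)^{-n}v||_{L^{r}(S)}\leq ||v||_{L^{r}(S)}$ and $||(\Id+\tfrac{t}{n}\A)^{-n}v-(\Id+\tfrac{t}{n}\A)^{-n}w||_{L^{r}(S)}\leq ||v-w||_{L^{r}(S)}$ for $v,w\in L^{r}(S)$. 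Letting $n\to\infty$ and applying Fatou's lemma along an a.e.\ convergent subsequence, one obtains $||T_{\A}(t)v||_{L^{r}(S)}\leq ||v||_{L^{r}(S)}$ and $||T_{\A}(t)v-T_{\A}(t)w||_{L^{r}(S)}\leq ||v-w||_{L^{r}(S)}$ for all $v,w\in L^{r}(S)$ and $r\in[1,\infty]$. In particular $v\in L^{q}_{0}(S)\subseteq L^{q}(S)$ forces $T_{\A}(t)v\in L^{q}(S)$, which together with the $L^{1}_{0}$-invariance already shown gives $T_{\A}(t)v\in L^{q}_{0}(S)$; this proves the invariance. The identities $T_{\A}(0)=\Id$ and $T_{\A}(t+h)=T_{\A}(t)T_{\A}(h)$ on $L^{q}_{0}(S)$ are inherited from $L^{1}(S)$, and the displayed estimate yields the $L^{q}(S)$-contractivity of the restriction.

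The only point that is not bookkeeping is the time-continuity of $t\mapsto T_{\A}(t)v$ in $L^{q}(S)$, which I expect to be the main obstacle, since the semigroup is a priori only $L^{1}(S)$-time-continuous. For $v\in L^{\infty}_{0}(S)$ the case $r=\infty$ above gives $||T_{\A}(t)v||_{L^{\infty}(S)}\leq ||v||_{L^{\infty}(S)}$ uniformly in $t$; applying the elementary interpolation bound $||h||_{L^{q}(S)}\leq ||h||_{L^{1}(S)}^{1/q}||h||_{L^{\infty}(S)}^{1-1/q}$ to $h=T_{\A}(t)v-T_{\A}(s)v$ and using the $L^{1}(S)$-continuity of $T_{\A}(\cdot)v$ shows that $t\mapsto T_{\A}(t)v$ is $L^{q}(S)$-continuous. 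For general $v\in L^{q}_{0}(S)$ one approximates $v$ in $L^{q}(S)$ by truncations with their mean subtracted (so as to remain in $L^{\infty}_{0}(S)$) and concludes by a $3\varepsilon$-argument, using that the $T_{\A}(t)$ are $L^{q}$-nonexpansive uniformly in $t$. Altogether this shows that the restriction of $T_{\A}$ to $L^{q}_{0}(S)$ is a time-continuous, contractive semigroup on $(L^{q}_{0}(S),||\cdot||_{L^{q}(S)})$ with $T_{\A}(t)0=0$; the only genuinely non-routine ingredients are the appeal to complete accretivity (which delivers the $L^{r}$-nonexpansiveness for all $r$ at once) and the interpolation-plus-density argument for $L^{q}$-time-continuity.
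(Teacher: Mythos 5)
Your proposal is correct, and it reaches the conclusion by a more self-contained route than the paper. The paper's proof simply imports the key facts from \cite{ich1}: mean preservation ($\overline{(T_{\A}(t)u)}=\overline{(u)}$) from \cite[Lemma 3.4]{ich1}, $L^{q}$-invariance and $L^{q}$-contractivity from \cite[Lemmas 3.3.1--3.3.2]{ich1}, the uniform $L^{\infty}$-bound from \cite[Lemma 3.3.3]{ich1}, and $T_{\A}(t)0=0$ from $0\in D(A)$, $A0=0$; the only argument carried out in detail is the time-continuity, done exactly as you do it in outline (approximate $u\in L^{q}_{0}(S)$ by $v\in L^{\infty}_{0}(S)$, exploit the uniform $L^{\infty}$-bound, and close with contractivity and a $3\varepsilon$-argument), except that the paper passes from a.e.\ convergence to $L^{q}$-convergence by dominated convergence rather than by your interpolation bound $\lVert h\rVert_{L^{q}}\leq \lVert h\rVert_{L^{1}}^{1/q}\lVert h\rVert_{L^{\infty}}^{1-1/q}$ — both work equally well. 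What you do differently is to rederive the imported ingredients from scratch: mass conservation by testing the variational identity with $\varphi\equiv 1$, passing to the closure, propagating mean-zero through the resolvents and then through the Crandall--Liggett exponential formula; and the $L^{r}$-nonexpansiveness ($1\leq r\leq\infty$) from complete accretivity of $\A$ plus $(\Id+\lambda\A)^{-1}0=0$, again transferred to $T_{\A}(t)$ via the exponential formula and Fatou. This buys independence from the specific lemmas of \cite{ich1} at the cost of invoking complete accretivity and the exponential formula, which the present paper never uses explicitly (it only records m-accretivity and density from \cite{mazon}); complete accretivity of this operator is indeed established in the cited literature, so your appeal is legitimate, but you should point the citation at where it is actually proved rather than at the m-accretivity statement. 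All individual steps you sketch (admissibility of the constant test function, closedness of $L^{1}_{0}(S)$ in $L^{1}(S)$, the truncation-minus-mean approximation showing density of $L^{\infty}_{0}(S)$ in $L^{q}_{0}(S)$) are sound.
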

\begin{proof} Let $u\in L^{q}_{0}(S)$, then \cite[Lemma 3.4]{ich1} yields $\overline{(T_{\A}(t)u)}=0$ and by \cite[Lemma 3.3.2]{ich1} we get $T_{\A}(t)u \in L^{q}(S)$ for all $t \geq 0$. Consequently, $T_{\A}(t)u \in L^{q}_{0}(S)$ for all $t \geq 0$. In addition, $(T_{\A}(t))_{t \geq 0}$ inherits the semigroup property of $(T_{\A}(t))_{t\geq 0}$ and appealing to \cite[Lemma 3.3.1]{ich1} yields that $(T_{\A}(t))_{t \geq 0}$ is contractive. Moreover, $T_{\A}(t)0=0$ is inferred easily from $0 \in D(A)$, $A0=0$.\\
It remains to prove the time-continuity. So let $(h_{m})_{m \in \mathbb{N}}$ be a null-sequence, $t \geq 0$ and $\varepsilon>0$ be given, and assume w.l.o.g. that $t+h_{m}\geq 0$ for all $m \in \mathbb{N}$. Moreover, choose $v \in L^{\infty}_{0}(S)$ such that $||u-v||_{L^{q}(S)}<\frac{\varepsilon}{2}$. Then we get by the time continuity of $T_{\A}$, and by passing to a subsequence if necessary, that $\lim \limits_{m \rightarrow \infty} T_{\A}(t+h_{m})v=T_{\A}(t)v$ almost everywhere. In addition, invoking \cite[Lemma 3.3.3]{ich1} gives $||T_{q}(t+h_{m})v||_{L^{\infty}(S)}\leq ||v||_{L^{\infty}(S)}$ for all $m \in \mathbb{N}$ and employing dominated convergence gives \linebreak$\lim \limits_{m \rightarrow \infty} T_{\A}(t+h_{m})v=T_{\A}(t)v$ w.r.t. $||\cdot||_{L^{q}(S)}$. Conclusively, we get by contractivity that
\begin{align*}
\lim \limits_{m \rightarrow \infty} ||T_{\A}(t+h_{m})u-T_{\A}(t)u||_{L^{q}(S)}\leq 2||u-v||_{L^{q}(S)}+\lim \limits_{m \rightarrow \infty} ||T_{\A}(t+h_{m})v-T_{\A}(t)v||_{L^{q}(S)} \leq \varepsilon,
\end{align*}
which yields the desired time continuity.
\end{proof}

\begin{lemma}\label{lemma_lipdifae} Let $u,v \in L^{2}_{0}(S)\cap D(A)$ and $f:[0,\infty)\rightarrow [0,\infty)$, with $f(t):=||T_{\A}(t)u-T_{\A}(t)v||^{2}_{L^{2}(S)}$ for all $t \geq 0$. Then $f$ is locally Lipschitz continuous. Consequently, it is differentiable almost everywhere and we have
\begin{align}
\label{prop_plaplaceboundproofeq1}
f^{\prime}(t)=-2 \int \limits_{S}\gamma \left(|\nabla T_{\A}(t)u|_{n}^{p-2}\nabla T_{\A}(t)u-|\nabla T_{\A}(t)v|_{n}^{p-2}\nabla T_{\A}(t)v\right)\cdot(\nabla T_{\A}(t)u-\nabla T_{\A}(t)v)d\lambda,
\end{align}
for a.e. $t \in (0,\infty)$.
\end{lemma}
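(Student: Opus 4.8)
Write $u(t):=T_{\A}(t)u$, $v(t):=T_{\A}(t)v$ and $w(t):=u(t)-v(t)$, so that $f(t)=\|w(t)\|_{L^{2}(S)}^{2}$. The plan is to proceed in three steps: first obtain the Lipschitz bound for $f$ by soft arguments, then differentiate $\|w(t)\|_{L^{2}(S)}^{2}$ using that the orbits of $T_{\A}$ started in $D(A)$ are strong solutions of the evolution equation governed by $\A$, and finally identify the derivative through the weak formulation in item~3 of Definition~\ref{definifition_plaplaceop}.

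For the Lipschitz continuity I would not differentiate anything. Since $u,v\in D(A)\subseteq D(\A)$, the standard contraction-semigroup estimate $\|T_{\A}(h)z-z\|_{L^{1}(S)}\le h\,\|Az\|_{L^{1}(S)}$ for $z\in D(A)$ (a consequence of $Az\in\A z$ and the general theory, cf.\ \cite{BenilanBook}), together with the $L^{1}$-contractivity of $T_{\A}$, shows that $t\mapsto u(t)$, $t\mapsto v(t)$, and hence $t\mapsto w(t)$, are Lipschitz from $[0,\infty)$ into $L^{1}(S)$. By the $L^{\infty}$-contractivity recorded in \cite[Lemma 3.3.3]{ich1}, $M:=\sup_{t\ge 0}\|w(t)\|_{L^{\infty}(S)}<\infty$. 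Writing $f(t)-f(s)=\int_{S}\bigl(w(t)-w(s)\bigr)\bigl(w(t)+w(s)\bigr)\,d\lambda$ and bounding the right-hand side by $2M\,\|w(t)-w(s)\|_{L^{1}(S)}$ shows $f$ is Lipschitz on $[0,\infty)$, hence differentiable for a.e.\ $t$.

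To compute $f'$, I would invoke the strong-solution theory for the weighted $p$-Laplacian from \cite{mazon} (summarised in \cite{ich1}): since $u,v\in D(A)$, there is a set of full measure in $(0,\infty)$ on which $u$ and $v$ are differentiable as $L^{1}(S)$-valued maps, on which $u(t),v(t)\in D(A)$, and on which $u'(t)=-Au(t)$ and $v'(t)=-Av(t)$ in $L^{1}(S)$; fix a $t$ in this set at which $f$ is also differentiable. A short passage to the limit in $h^{-1}\bigl(f(t+h)-f(t)\bigr)=\int_{S}\tfrac{w(t+h)-w(t)}{h}\bigl(w(t+h)+w(t)\bigr)\,d\lambda$ — using that $w$ is $L^{1}$-differentiable at $t$, that $\|w(t+h)\|_{L^{\infty}(S)}\le M$, and dominated convergence with dominating function $2M\,|w'(t)|\in L^{1}(S)$ — yields $f'(t)=2\int_{S}w(t)\,w'(t)\,d\lambda=-2\int_{S}\bigl(u(t)-v(t)\bigr)\bigl(Au(t)-Av(t)\bigr)\,d\lambda$. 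Since $u(t),v(t)\in D(A)$ we have $\varphi:=u(t)-v(t)\in W^{1,p}_{\gamma}(S)\cap L^{\infty}(S)$, so applying item~3 of Definition~\ref{definifition_plaplaceop} to the pairs $(u(t),Au(t))$ and $(v(t),Av(t))$ with this test function and subtracting gives $\int_{S}\bigl(Au(t)-Av(t)\bigr)(u(t)-v(t))\,d\lambda=\int_{S}\gamma\bigl(|\nabla u(t)|_{n}^{p-2}\nabla u(t)-|\nabla v(t)|_{n}^{p-2}\nabla v(t)\bigr)\cdot\bigl(\nabla u(t)-\nabla v(t)\bigr)\,d\lambda$, which combined with the previous identity is precisely~(\ref{prop_plaplaceboundproofeq1}).

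The step I expect to be the real obstacle is importing the precise regularity used in the third paragraph: that the orbits are genuinely a.e.\ differentiable in $L^{1}(S)$ with $u(t)\in D(A)$ and $u'(t)=-Au(t)$, rather than merely $u(t)\in D(\A)$ with $u'(t)\in-\A u(t)$. This has to be read off from the strong-solution analysis of \cite{mazon}/\cite{ich1} and is not a consequence of abstract accretive-operator theory alone. A secondary, easily overlooked technical point is the chain rule $f'(t)=2\int_{S}w(t)\,w'(t)\,d\lambda$, which is delicate only because $L^{1}(S)$ lacks the Radon--Nikod\'ym property, so one cannot differentiate the integrand pointwise; it is exactly the combination of $L^{1}$-differentiability with the uniform $L^{\infty}$ bound $M$ that legitimises the limit passage.
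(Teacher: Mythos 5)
Your proposal is correct and follows essentially the same route as the paper: Lipschitz continuity of $f$ from the $L^{1}$-Lipschitz orbits combined with the uniform $L^{\infty}$ bound of \cite[Lemma 3.3.3]{ich1}, differentiation via the strong-solution property $-T_{\A}^{\prime}(t)w=AT_{\A}(t)w$ for a.e.\ $t$ (the paper's citation of \cite[Lemma 3.3.4]{ich1}), and identification of the derivative by testing the definition of $A$ with $T_{\A}(t)u-T_{\A}(t)v$. The only difference is presentational: you spell out the orbit estimate and the dominated-convergence limit passage that the paper delegates to \cite[Lemma 7.8]{BenilanBook} and \cite[Lemma 5.3]{ich1}.
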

\begin{proof} Firstly, let us verify the desired local Lipschitz continuity. To this end, fix $c>0$ and note that $[0,c]\ni t \mapsto T_{\A}(t)u$ and $[0,c]\ni t \mapsto T_{\A}(t)v$ are by \cite[Lemma 7.8]{BenilanBook}, w.r.t. $||\cdot||_{L^{1}(S)}$, Lipschitz continuous. So let $C_{u},C_{v}\geq 0$ denote their Lipschitz constants. Then, we get for any $t_{1},t_{2} \in [0,c]$ that
\begin{align*}
|f(t_{1})-f(t_{2})|\leq |t_{1}-t_{2}|(C_{u}+C_{v})(2||u||_{L^{\infty}(S)}+2||v||_{L^{\infty}(S)}),
\end{align*}
where we used \cite[Lemma 3.3.3]{ich1}, which reads $||T_{\A}(t)w||_{L^{\infty}(S)}\leq ||w||_{L^{\infty}(S)}$ for all $t \geq 0$, $w \in L^{\infty}(S)$.\\
Consequently, $f$ is locally Lipschitz continuous and as it is real-valued, it is also differentiable almost everywhere.\\ 
Proof of (\ref{prop_plaplaceboundproofeq1}). Firstly, for all $w \in D(A)$ we have $T_{\A}(t)w \in D(A)$ and $-T^{\prime}_{\A}(t)w=A T_{\A}(t)w$ for a.e. $t \in (0,\infty)$, see \cite[Lemma 3.3.4]{ich1}. Thus, as $D(A)\subseteq W^{1,p}_{\gamma}(S)$, the integral occurring on the right-hand-side of (\ref{prop_plaplaceboundproofeq1}) exists for a.e. $t \in (0,\infty)$ and for almost every $t \in (0,\infty)$ we have $T_{\A}(t)u, T_{\A}(t)v \in D(A)$, $-T^{\prime}_{\A}(t)u=A T_{\A}(t)u$ and $-T^{\prime}_{\A}(t)v=A T_{\A}(t)v$. In light of this, it is intuitively clear that
\begin{align}
\label{prop_plaplaceboundproofeq3}
f^{\prime}(t)= -2 \int \limits_{S} (T_{\A}(t)u-T_{\A}(t)v)(AT_{\A}(t)u-AT_{\A}(t)v)d\lambda,
\end{align}
for a.e. $t \in (0,\infty)$ and making rigorous that one is allowed to perform the needed exchange of the integral and the differential works by the aid of dominated convergence and identical to the proof of \cite[Lemma 5.3]{ich1}.\\
Finally, (\ref{prop_plaplaceboundproofeq3}) implies (\ref{prop_plaplaceboundproofeq1}) by using $(T_{\A}(t)u-T_{\A}(t)v)$ as a test function in the definition of $A$.
\end{proof}

\begin{proposition}\label{prop_plaplacebound} Let $u,v \in L^{2}_{0}(S)$. Then we have
\begin{align}
\label{prop_plaplaceboundeq}
||T_{\A}(t)u-T_{\A}(t)v||_{L^{2}(S)}\leq \left(\kappa t+||u-v||_{L^{2}(S)}^{2-p}\right)^{\frac{1}{2-p}},~\forall t\geq 0,
\end{align}
where $\kappa:=(p-2)2^{2-p}\left(\int \limits_{S}\gamma^{\frac{2}{2-p}}d\lambda\right)^{\frac{2-p}{2}}C_{S,2}^{-p}$ and $C_{S,2}$ is the Poincar\'{e} constant of $S$ in $L^{2}_{0}(S)$.
\end{proposition}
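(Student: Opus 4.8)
\emph{Overview and reduction.} The asserted inequality is precisely Assumption \ref{assumption}.\ref{assumption_enumi3}) for $W=L^{2}_{0}(S)$ with $\rho=\frac{1}{p-2}$, and the stated $\kappa$ is exactly the constant one needs. The engine is the differential inequality Lemma \ref{lemma_diffinequality}, applied to
\[
f(t):=\|T_{\A}(t)u-T_{\A}(t)v\|_{L^{2}(S)}^{2},\qquad\text{with the choice }\tilde\rho:=\frac{2}{p-2}\in(0,\infty).
\]
I would first prove (\ref{prop_plaplaceboundeq}) for $u,v\in D(A)\cap L^{2}_{0}(S)$ and then pass to arbitrary $u,v\in L^{2}_{0}(S)$ by density; the case $u=v$ is trivial, so assume $u\neq v$ throughout.

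\emph{The differential inequality for $u,v\in D(A)\cap L^{2}_{0}(S)$.} By Lemma \ref{lemma_lipdifae}, $f$ is locally Lipschitz on $[0,\infty)$ (hence a.e.\ differentiable) with $f'(t)$ given by the formula in that lemma for a.e.\ $t>0$. Writing $w(t):=T_{\A}(t)u-T_{\A}(t)v$ and using the classical monotonicity inequality $\bigl(|a|_{n}^{p-2}a-|b|_{n}^{p-2}b\bigr)\cdot(a-b)\geq 2^{2-p}|a-b|_{n}^{p}$, valid for all $a,b\in\R^{n}$ when $p\geq 2$, one gets $f'(t)\leq -2^{3-p}\int_{S}\gamma\,|\nabla w(t)|_{n}^{p}\,d\lambda$ for a.e.\ $t>0$. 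Next, Hölder's inequality with exponents $\frac{p}{2}$ and $\frac{p}{p-2}$ applied to $|\nabla w|_{n}^{2}=(\gamma|\nabla w|_{n}^{p})^{2/p}\,\gamma^{-2/p}$ — legitimate since $\gamma^{2/(2-p)}\in L^{1}(S)$ — yields $\int_{S}\gamma\,|\nabla w|_{n}^{p}\,d\lambda\geq\bigl(\int_{S}\gamma^{\frac{2}{2-p}}d\lambda\bigr)^{\frac{2-p}{2}}\|\nabla w\|_{L^{2}(S)}^{p}$, and in particular $w(t)\in W^{1,2}(S)$; since $w(t)\in L^{2}_{0}(S)$ by Lemma \ref{lemma_invsssg}, the Poincar\'e inequality gives $\|\nabla w(t)\|_{L^{2}(S)}\geq C_{S,2}^{-1}\|w(t)\|_{L^{2}(S)}$. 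Combining these,
\[
f'(t)\leq -2^{3-p}\Bigl(\int_{S}\gamma^{\frac{2}{2-p}}d\lambda\Bigr)^{\frac{2-p}{2}}C_{S,2}^{-p}\,f(t)^{p/2}=-\frac{2\kappa}{p-2}\,f(t)^{p/2}=-\kappa\tilde\rho\,f(t)^{1+\frac{1}{\tilde\rho}}
\]
for a.e.\ $t>0$, because $1+\frac1{\tilde\rho}=\frac p2$. Lemma \ref{lemma_diffinequality} then gives $f(t)\leq\bigl(\kappa t+f(0)^{-1/\tilde\rho}\bigr)^{-\tilde\rho}=\bigl(\kappa t+\|u-v\|_{L^{2}(S)}^{2-p}\bigr)^{\frac{2}{2-p}}$, and taking square roots yields (\ref{prop_plaplaceboundeq}) for $u,v\in D(A)\cap L^{2}_{0}(S)$.

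\emph{Density passage.} For general $u,v\in L^{2}_{0}(S)$ pick $u_{m},v_{m}\in D(A)\cap L^{2}_{0}(S)$ with $u_{m}\to u$ and $v_{m}\to v$ in $L^{2}(S)$. Since $T_{\A}(t)$ is an $L^{2}$-contraction on $L^{2}_{0}(S)$ by Lemma \ref{lemma_invsssg}, one passes to the limit in the bound just proved, using continuity of $s\mapsto s^{2-p}$ on $(0,\infty)$ together with $\|u_{m}-v_{m}\|_{L^{2}(S)}\to\|u-v\|_{L^{2}(S)}>0$, to obtain (\ref{prop_plaplaceboundeq}) in general.

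\emph{Main obstacle.} Everything except two points is routine bookkeeping of exponents together with the standard vector monotonicity, Hölder and Poincar\'e inequalities. The first genuinely substantive input is the a.e.\ differentiation formula for $f$ with the correct weighted $p$-Laplace right-hand side, which is exactly Lemma \ref{lemma_lipdifae} and so is used as a black box. The second is the density of $D(A)\cap L^{2}_{0}(S)$ in $(L^{2}_{0}(S),\|\cdot\|_{L^{2}(S)})$ required in the last step; I would extract this from the $L^{q}$-contractivity of the resolvents of $\A$ on the scale of $L^{q}(S)$-spaces combined with the explicit description of the closure $\A$ in \cite[Proposition 3.6]{mazon}, which is the only place where care is really needed.
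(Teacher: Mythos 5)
Your proposal is correct and follows essentially the same route as the paper: the same function $f(t)=\|T_{\A}(t)u-T_{\A}(t)v\|_{L^{2}(S)}^{2}$, the same monotonicity, H\"older and Poincar\'e estimates leading to $f'\leq-\kappa\tilde\rho f^{1+1/\tilde\rho}$ with $\tilde\rho=\tfrac{2}{p-2}$, Lemma \ref{lemma_diffinequality}, and then a density-plus-contractivity passage to general $u,v\in L^{2}_{0}(S)$. The only divergence is the density of $D(A)\cap L^{2}_{0}(S)$ in $L^{2}_{0}(S)$, which you defer to resolvent estimates and the description of $\A$, whereas the paper settles it more simply by taking $u_{m}\in D(A)$ with $u_{m}\to u$ in $L^{2}(S)$ (via \cite[Lemma 5.6]{ich1}) and subtracting means, noting $u_{m}-\overline{(u_{m})}\in D(A)\cap L^{2}_{0}(S)$ with $A(u_{m}-\overline{(u_{m})})=Au_{m}$.
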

\begin{proof} For now assume in addition $u,v \in L^{2}_{0}(S)\cap D(A)$, set $f(t):=||T_{\A}(t)u-T_{\A}(t)v||^{2}_{L^{2}(S)}$ for all $t \geq 0$ and let us derive an upper bound on $f^{\prime}$, which exists a.e. on $(0,\infty)$ due to Lemma \ref{lemma_lipdifae}.\\ 
Firstly, we have $W^{1,p}_{\gamma}(S)\subseteq W^{1,2}(S)$, since appealing to H\"older's inequality gives
\begin{align*}
\int \limits_{S} |\nabla \varphi|_{n}^{2}d\lambda = \int \limits_{S} \gamma^{-\frac{2}{p}}\gamma^{\frac{2}{p}} |\nabla \varphi|_{n}^{2}d\lambda \leq \left(\int \limits_{S} \gamma^{\frac{2}{2-p}}d\lambda\right)^{\frac{p-2}{p}}\left(\int \limits_{S} \gamma |\nabla \varphi|_{n}^{p}d\lambda\right)^{\frac{2}{p}}<\infty,~\forall \varphi \in W^{1,p}_{\gamma}(S).
\end{align*}
Consequently, employing Poincar\'{e}'s inequality yields
\begin{align}
\label{prop_plaplaceboundproofeq4}
\int \limits_{S}\gamma |\nabla \varphi|_{n}^{p}d\lambda \geq C_{S,2}^{-p}\left(\int \limits_{S} \varphi^{2}d\lambda\right)^{\frac{p}{2}}\left(\int \limits_{S} \gamma^{\frac{2}{2-p}}d\lambda\right)^{\frac{2-p}{2}},~\forall \varphi \in W^{1,p}_{\gamma}(S)\cap L^{2}_{0}(S).
\end{align}
Moreover, it is well known that $(|x|_{n}^{p-2}x-|y|_{n}^{p-2}y)\cdot(x-y)\geq 2^{2-p}|x-y|_{n}^{p}$ for all $x,y \in \mathbb{R}^{n}$, see \cite[Lemma 3.6]{plaplaceinequality}. By \cite[Lemma 3.3.4]{ich1}, we get $T_{\A}(t)u,T_{\A}(t)v \in W^{1,p}_{\gamma}(S)$ for a.e. $t \in (0,\infty)$ and by the aid of Lemma \ref{lemma_invsssg} we then obtain $T_{\A}(t)u-T_{\A}(t)v \in W^{1,p}_{\gamma}(S)\cap L^{2}_{0}(S)$ for a.e. $t \in (0,\infty)$. These observations enable us to conclude from (\ref{prop_plaplaceboundproofeq4})  and Lemma \ref{lemma_lipdifae} that
\begin{align*}
f^{\prime}(t)\leq - 2^{3-p} \int \limits_{S}\gamma |\nabla T_{\A}(t)u-\nabla T_{\A}(t)v|_{n}^{p}d\lambda \leq - 2^{3-p}C_{S,2}^{-p}\left(\int \limits_{S} \gamma^{\frac{2}{2-p}}d\lambda\right)^{\frac{2-p}{2}}f(t)^{\frac{p}{2}},
\end{align*}
for a.e. $t \in (0,\infty)$. Thus, by setting $\tilde{\rho}:= \frac{2}{p-2}$, we get $f^{\prime}(t)\leq -\kappa \tilde{\rho}f(t)^{1+\frac{1}{\tilde{\rho}}}$ for a.e. $t \in (0,\infty)$. Hence, invoking Lemma \ref{lemma_diffinequality} yields $f(t)\leq (\kappa t+f(0)^{-\frac{1}{\tilde{\rho}}})^{-\tilde{\rho}}$; thus by taking the square root and noting that $u,v\in L^{2}_{0}(S)\cap D(A)$ were arbitrary, we get
\begin{align}
\label{prop_plaplaceboundproofeq5}
||T_{\A}(t)u-T_{\A}(t)v||_{L^{2}(S)}\leq \left(\kappa t+||u-v||_{L^{2}(S)}^{2-p}\right)^{\frac{1}{2-p}},~\forall u,v \in L^{2}_{0}(S)\cap D(A),
\end{align}
for all $t \in [0,\infty)$. It remains to generalize the preceding inequality to all $u,v \in L^{2}_{0}(S)$. So fix $t \in [0,\infty)$, let $u,v \in L^{2}_{0}(S)$ and introduce $(u_{m})_{m \in \mathbb{N}},(v_{m})_{m \in \mathbb{N}}\subseteq D(A)$ such that $\lim \limits_{m \rightarrow \infty} u_{m}=u$ and $\lim \limits_{m \rightarrow \infty} v_{m}=v$ in $L^{2}(S)$, such sequences exist by \cite[Lemma 5.6]{ich1}. Now, one instantly verifies that $u_{m}-\overline{(u_{m})} \in D(A)$, with $A(u_{m}-\overline{(u_{m})})=A u_{m}$. Consequently, $u_{m}-\overline{(u_{m})} \in D(A)\cap L^{2}_{0}(S)$ for all $m \in \mathbb{N}$ and $\lim \limits_{m \rightarrow \infty} u_{m}-\overline{(u_{m})}=u$, in $L^{2}(S)$ since $\overline{(u)}=0$. Conclusively, as the analogous statements hold for $v,v_{m}$, (\ref{prop_plaplaceboundeq}) follows from (\ref{prop_plaplaceboundproofeq5}) and Lemma \ref{lemma_invsssg}.
\end{proof}

\begin{theorem}\label{theoremplaplace} Let $q \in [1,2]$ and let $(\eta_{k})_{k \in \mathbb{N}}\subseteq \mathcal{M}(\Omega;L_{0}^{q}(S))$ be an i.i.d. sequence. Moreover, let $(\beta_{m})_{m \in \mathbb{N}}$ be another i.i.d. sequence which is independent of $(\eta_{k})_{k \in \mathbb{N}}$ and assume that $\beta_{m}\sim Exp(\theta)$ for all $m \in \mathbb{N}$, where $\theta \in (0,\infty)$. In addition, assume $||\eta_{k}||_{L^{q}(S)} \in L^{2}(\Omega)$ for all $k \in \mathbb{N}$. Moreover, let $x \in \mathcal{M}(\Omega;L_{0}^{q}(S))$ be an independent initial, i.e. independent of  $((\eta_{k})_{k \in \mathbb{N}},(\beta_{k})_{k \in \mathbb{N}})$ and let \linebreak$\X_{x}:[0,\infty)\times \Omega \rightarrow L_{0}^{q}(S)$ be the process generated by $((\beta_{k})_{k \in \mathbb{N}},(\eta_{k})_{k \in \mathbb{N}},x,T_{\A})$ in $L_{0}^{q}(S)$.\\
Then $(\X_{x}(t))_{t \geq 0}$ is a time-homogeneous Markov process (w.r.t. the completion of its natural filtration) which possesses a unique invariant probability measure $\bar{\mu}:\B(L^{q}_{0}(S))\rightarrow [0,1]$. In addition, for any $\psi \in Lip(L_{0}^{q}(S))$, the convergence
\begin{align}
\label{theoremplaplace_eq1}
\lim \limits_{t \rightarrow \infty} \frac{1}{t}\int \limits_{0}\limits^{t}\psi(\X_{x}(\tau))d\tau=\int \limits_{L^{q}_{0}(S)} \psi(v)\bar{\mu}(dv):= \overline{(\psi)},
\end{align}
takes place with probability one, and if additionally $p \in (2,4)$, then there is a $\sigma^{2}(\psi)\in [0,\infty)$ such that
\begin{align}
\label{theoremplaplace_eq2}
\lim \limits_{t \rightarrow \infty }\frac{1}{\sqrt{t}}\left( \int \limits_{0}\limits^{t}\psi(\X_{x}(\tau))d\tau-t\overline{(\psi)}\right)=Y\sim N(0,\sigma^{2}(\psi)),
\end{align}
in distribution.
\end{theorem}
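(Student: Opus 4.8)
The plan is to obtain every assertion by specializing the abstract results of Sections \ref{sec_mp} and \ref{sec_sllnclt} to the concrete choice $V:=L^q_0(S)$, $W:=L^2_0(S)$ and $T:=T_{\A}|_{L^q_0(S)}$. First I would record that the standing hypotheses of those two sections hold in this setting: $(L^q_0(S),||\cdot||_{L^q(S)})$ is a separable Banach space; by Lemma \ref{lemma_invsssg} the restriction of $T_{\A}$ to $L^q_0(S)$ is a time-continuous, contractive semigroup on it with $T_{\A}(t)0=0$; the sequences $(\eta_k)_{k\in\mathbb{N}}$ and $(\beta_m)_{m\in\mathbb{N}}$ are exactly as prescribed at the beginning of Section \ref{sec_mp}; and $||\eta_k||_{L^q(S)}\in L^2(\Omega)$ is precisely condition (\ref{eq_etal2int}). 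Granting this, Theorem \ref{theorem_mp} already gives that $(\X_x(t))_{t\ge 0}$ is a time-homogeneous Markov process with respect to the completion of its natural filtration, and Lemma \ref{lemma_basicprop} supplies the Feller and e-properties that the subsequent results rely on.

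The crucial step is to verify Assumption \ref{assumption} with the above $W$ and with $\rho:=\tfrac{1}{p-2}$. For \ref{assumption}.\ref{assumption_enumi1}): since $\lambda(S)<\infty$ and $q\le 2$, H\"older's inequality yields a continuous injection $L^2(S)\hookrightarrow L^q(S)$; as the functional $f\mapsto\overline{(f)}$ is $L^q$-continuous, this restricts to a continuous injection $L^2_0(S)\hookrightarrow L^q_0(S)$, and density of $L^2_0(S)$ in $L^q_0(S)$ follows from density of $L^2(S)$ in $L^q(S)$ combined with continuity of $f\mapsto f-\overline{(f)}$; moreover $L^2_0(S)$ is separable as a closed subspace of $L^2(S)$. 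For \ref{assumption}.\ref{assumption_enumi2}): $T_{\A}$ leaves $L^2_0(S)$ invariant, again by Lemma \ref{lemma_invsssg} (case $q=2$). For \ref{assumption}.\ref{assumption_enumi3}): Proposition \ref{prop_plaplacebound} gives $||T_{\A}(t)u-T_{\A}(t)v||_{L^2(S)}\le(\kappa t+||u-v||_{L^2(S)}^{2-p})^{1/(2-p)}$ with $\kappa>0$ as stated there, and with $\rho=\tfrac1{p-2}$ one has $-\tfrac1\rho=2-p$ and $-\rho=\tfrac1{2-p}$, so this is literally the inequality demanded of $(T(t))_{t\ge0}$ on $W$. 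For \ref{assumption}.\ref{assumption_enumi4}): once more Lemma \ref{lemma_invsssg}.

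With Assumption \ref{assumption} in force, Proposition \ref{prop_uniqueinvpropmeas} delivers the unique invariant probability measure $\bar\mu$ on $\B(L^q_0(S))$, and Theorem \ref{theorem_slln} delivers the law of large numbers (\ref{theoremplaplace_eq1}) for every $\psi\in Lip(L_0^q(S))$, with $\overline{(\psi)}=\int_{L^q_0(S)}\psi(v)\bar\mu(dv)$. For the central limit theorem (\ref{theoremplaplace_eq2}) I would invoke Theorem \ref{theorem_clt}, whose only extra requirement is $\rho>\tfrac12$; since $\rho=\tfrac1{p-2}$, this is equivalent to $p<4$, i.e. exactly the stated restriction $p\in(2,4)$, and $\sigma^2(\psi)$ is then the asymptotic variance furnished by that theorem.

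I expect the single non-routine point to be the verification of Assumption \ref{assumption}.\ref{assumption_enumi1}) — specifically, tracking that passing to the mean-zero subspaces preserves both continuity and density of the embedding — together with the bookkeeping identification $\rho=\tfrac1{p-2}$ that pins the CLT threshold at $p<4$; everything else is a direct application of the abstract theorems.
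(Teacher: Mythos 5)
Your proposal is correct and follows essentially the same route as the paper's own proof: restrict $T_{\A}$ to $L^q_0(S)$ via Lemma \ref{lemma_invsssg}, verify Assumption \ref{assumption} with $V=L^q_0(S)$, $W=L^2_0(S)$, $\rho=\tfrac{1}{p-2}$ and $\kappa$ from Proposition \ref{prop_plaplacebound}, and then quote Theorem \ref{theorem_mp}, Proposition \ref{prop_uniqueinvpropmeas}, Theorem \ref{theorem_slln} and Theorem \ref{theorem_clt}. Your explicit check of the embedding $L^2_0(S)\hookrightarrow L^q_0(S)$ (continuity, density, separability) is a point the paper leaves implicit, but it is routine and does not change the argument.
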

\begin{proof} By Lemma \ref{lemma_invsssg} $(T_{\A}(t))_{t \geq 0}$ is a time continuous, contractive semigroup on $L^{q}_{0}(S)$. Consequently, by choosing $V=L^{q}_{0}(S)$ in Section \ref{sec_mp} it follows from Theorem \ref{theorem_mp} that $\X_{x}$ is a time-continuous Markov process.\\
Moreover, it follows from Lemma \ref{lemma_invsssg} and Proposition \ref{prop_plaplacebound} that $(T_{\A}(t))_{t \geq 0}$ fulfills Assumption \ref{assumption}, where we choose $V=L^{q}_{0}(S)$, $W=L^{2}_{0}(S)$, $\rho:=\frac{1}{p-2}$ and $\kappa$ as in Proposition \ref{prop_plaplacebound}. Consequently, appealing to Proposition \ref{prop_uniqueinvpropmeas} yields the existence of a unique invariant probability measure and Theorem \ref{theorem_slln} implies (\ref{theoremplaplace_eq1}). Finally, (\ref{theoremplaplace_eq2}) follows from Theorem \ref{theorem_clt}, since $p \in (2,4)$ implies $\rho>\frac{1}{2}$.
\end{proof}

\begin{center}
	\textsc{Acknowledgment}
\end{center}
The present author is grateful to Prof. Dr. Alexei Kulik for fruitful conversations during a research stay of the present author at Technische Universit\"at Berlin.

%%%%%%%%%%%%%%%%%%%%%%%%%%%%%%%%
%%%%BEGIN bibliography%%%%%%%%%%%
%%%%%%%%%%%%%%%%%%%%%%%%%%%%%%%%

\end{document}